\DeclareMathAlphabet{\mathpzc}{OT1}{pzc}{m}{it}
\newtheorem{thm}{Theorem}
\newtheorem{cor}[thm]{Corollary}
\newtheorem{preremark}[thm]{Remark}
\numberwithin{equation}{section}
\newtheorem{prop}[thm]{Proposition}
\newtheorem{lemma}[thm]{Lemma}
\newtheorem{defn}[thm]{Definition}
\numberwithin{equation}{section}
\newcommand{\norm}[1]{\left\Vert#1\right\Vert}
\newcommand{\abs}[1]{\left\vert#1\right\vert}
\newcommand{\R}{\mathbb R}
\newcommand{\N}{\mathbb N}
\newcommand{\dd} {\; \mathrm{d}}
\DeclareMathOperator*{\osc}{osc}
\newcommand{\meanbar}[1]{%
\setbox0 = \hbox{$#1 \int$}
\hbox to 0pt{%
\thinspace
\hskip 0.1\wd0
\raise 0.5\ht0
\hbox{%
\lower 0.5\dp0
\hbox{\rule{0.8\wd0}{2\linethickness}}
}%
\hss
}%
}
\def\@tocline#1#2#3#4#5#6#7{\relax
  \ifnum #1>\c@tocdepth % then omit
  \else
    \par \addpenalty\@secpenalty\addvspace{#2}%
    \begingroup \hyphenpenalty\@M
    \@ifempty{#4}{%
      \@tempdima\csname r@tocindent\number#1\endcsname\relax
    }{%
      \@tempdima#4\relax
    }%
    \parindent\z@ \leftskip#3\relax \advance\leftskip\@tempdima\relax
    \rightskip\@pnumwidth plus4em \parfillskip-\@pnumwidth
    #5\leavevmode\hskip-\@tempdima
      \ifcase #1
       \or\or \hskip 1em \or \hskip 2em \else \hskip 3em \fi%
      #6\nobreak\relax
    \hfill\hbox to\@pnumwidth{\@tocpagenum{#7}}\par% <---- \dotfill -> \hfill
    \nobreak
    \endgroup
  \fi}
\author{Georgiana Chatzigeorgiou and Emmanouil Milakis}
\title[Regularity for fully nonlinear parabolic equations with oblique boundary data]{Regularity for fully nonlinear parabolic equations with oblique boundary data}
\begin{document}

\begin{abstract} 
We obtain up to a flat boundary regularity results in parabolic H\"{o}lder spaces for viscosity solutions of fully nonlinear parabolic equations with oblique boundary conditions.
%Date: \today
\end{abstract}

\let\thefootnote\relax\footnotetext{
{\bf Keywords.} Fully nonlinear parabolic equations; Oblique boundary conditions; Viscosity solutions  

{\bf Mathematics Subject Classification (2010).} Primary 35K55; Secondary 35B65.

This work was co-funded by the European Regional Development Fund and the Republic of Cyprus through the Research and Innovation Foundation (Project: EXCELLENCE/1216/0025)
}

%{\bf Keywords.} Fully nonlinear parabolic equations; Oblique boundary conditions; Viscosity solutions  

%{\bf Mathematics Subject Classification (2000).} Primary 35K55; Secondary 35B65.

%{\textcolor{red}{Corrections up to here}}

\maketitle

\tableofcontents

\section{Introduction}

The purpose of the present article is to study the regularity of viscosity solutions of fully nonlinear parabolic equations with oblique boundary conditions of the form
\begin{align}  \label{main_problem}
\begin{cases}
F(D^2u) -u_t = f, &\ \ \ \ \ \text{ in } \ Q_1^+ \\
\beta \cdot Du = g, &\ \ \ \ \ \text{ on } \ Q_1^*\\
u = u_0, &\ \ \ \ \ \text{ on } \ \partial_pQ_1^+ \setminus Q_1^*
\end{cases}
\end{align}
where $F$ is a uniformly elliptic convex operator in $S_n$, $f, g$ and $u_0$ are given data and $\beta:Q_1^* \to \R^n$ is a given vector function with $\beta_n \geq \delta_0>0$ and $||\beta||_{L^\infty} \leq 1$. By $Q_1^+$ we denote the half parabolic cylinder with flat part $Q_1^*$ (see subsection \ref{notations} for precise definitions). 

There is a vast literature that concerns oblique derivative boundary value problems for elliptic operators. For the linear elliptic case we refer the reader to the book of Gary Lieberman \cite{LiebObl} and references therein. In the case of fully nonlinear elliptic operators, existence and uniqueness of viscosity solutions are obtained in \cite{Ishii91} (where boundary conditions are in fact more general). Regularity of viscosity solutions can be found in \cite{ms1} and \cite{LZ}.

The corresponding theory for linear parabolic equations  with oblique derivative boundary data is also well understood. For existence, uniqueness and regularity results we refer to \cite{Lieb86I}, \cite{Lieb86II}, \cite{NU95}, \cite{Naz90}, \cite{Weid05} and \cite{GS84}. For the case when the operator is fully nonlinear parabolic, comparison and existence results for viscosity solutions can be found in \cite{IshiiSato04}. Interior and boundary estimates for fully nonlinear parabolic equations with Dirichlet conditions have been studied by Lihe Wang in a series of papers (see \cite{Wang1}, \cite{Wang2}, \cite{Wang3}). Moreover apriori H\"older estimates for classical solutions appeared in \cite{Ural94}, \cite{Lieb90}. The main goal of the present paper is to investigate the regularity of viscosity solutions.

Our purpose is to prove, under suitable assumptions, H\"older regularity (in the parabolic sense) for $u$ and its first and second derivatives (note that in the definition of viscosity solutions we only assume $u$ to be continuous). The idea is to use an approximation method as used (for the elliptic case) in \cite{LZ} which is first introduced in \cite{CaffNonlinear} (see also \cite{CC}). That is, we try to approximate inductively the general problem (\ref{main_problem}) by "simpler" ones for which the regularity is known. The "simpler" problem will be special case of (\ref{main_problem}) where the equation as well as the boundary condition are homogeneous and the vector $\beta$ is constant. To attack the regularity for this type of problems we first examine the regularity for the parabolic Neumann problem (that is, when $\beta=(,0,\dots,0,1)$)  which is obtained by adapting the ideas of \cite{ms1} in the parabolic framework. Then, we observe that after a suitable change of variables a constant oblique derivative problem can de transformed into a Neumann problem.

The outline of the paper is as follows. Section \ref{prelims} contains the basic notations and definition as well as an estimate of Aleksandrov-Bakel'man-Pucci-Tso type which is a basic tool in our approach. In Section \ref{Ca}, we prove H\"older estimates for $u$ via a boundary Harnack inequality. In Section \ref{C1a} we introduce suitable approximate solutions to get a uniqueness type result which is necessary when we study the first order difference quotients. Next we get H\"older estimates for the first derivatives for the Neumann and oblique derivative case respectively. In Section \ref{C2a} we obtain H\"older estimates for the second derivatives first for the Neumann and secondly for oblique derivative case. Finally, in the appendix, for the sake of completeness, we provide proofs for certain regularity results for the nonlinear parabolic Dirichlet problem and a closedness result which are used in the text.   

\section{Preliminaries}\label{prelims}

\subsection{Notations-Definitions}\label{notations}
We denote $X=(x,y) \in \R^n$, where $x \in \R^{n-1}$ and $y \in \R$ and $P=(X,t)\in \R^{n+1}$, where $X$ are the space variables and $t$ is the time variable. The Euclidean ball in $\R^n$ and the elementary cylinder in $\R^{n+1}$ will be denoted by
$$B_r(X_0) := \{ X \in \R^n : |X-X_0| < r\}, \ \ Q_r(X_0,t_0) := B_r (X_0) \times (t_0 - r^2, t_0]$$
respectively. We define the following half and thin-cylinders, for $r>0, X_0 \in \R^n_+, t_0 \in \R$
$$Q_r^+(X_0,t_0):= Q_r(X_0,t_0) \cap \{y >y_0 \},  \ \ Q_r^*(X_0,t_0):= Q_r(X_0,t_0) \cap \{y =y_0 \}.$$
Note that, $\Omega^\circ, \overline{\Omega}, \partial \Omega$ will be the interior, the closure and the boundary of the domain $\Omega \subset \R^{n+1}$, respectively, in the sense of the Euclidean topology of $\R^{n+1}$. We define also the parabolic interior to be,
$$int_p(\Omega ) := \{ (X,t) \in \R ^{n+1} : \ there \ exists \  r>0 \ so \ that \ Q_r^\circ(X,t) \subset \Omega \}$$
and the parabolic boundary, $\partial _p (\Omega ) := \overline{\Omega } \setminus int_p(\Omega ).$ Let us also define the parabolic distance for $P_1=(X,t),\ P_2=(Y,s) \in \R^{n+1} $, $p(P_1,P_2) := \max \{|X-Y|,|t-s|^{1/2}\}$. Note that in this case $Q_r(P_0)$ will be the set $\{ P \in \R^{n+1} : p(P,P_0)<r, t<t_0 \}$.

Next we define also the corresponding parabolic H\"older spaces. For a function $f$ defined in a domain $\Omega \subset \R^{n+1}$ we set,

$$[f]_{\alpha; \Omega} := \sup _ {P_1, P_2 \in \overline{\Omega}, P_1 \neq P_2} \frac{|f(P_1) - f(P_2)|}{p(P_1,P_2)^{\alpha}}, \ \ \langle f \rangle _{\alpha+1; \Omega} := \sup _ {(X,t_1), (X,t_2) \in \overline{\Omega} \atop t_1 \neq t_2} \frac{|f(X,t_1) - f(X,t_2)|}{|t_1-t_2|^{\frac{\alpha+1}{2}}}.$$

Then we say that,
\begin{enumerate}
\item[$\bullet$] $f \in H^\alpha (\overline{\Omega})$ if
$||f||_{H^{\alpha} (\overline{\Omega}) } := \sup _{\overline{\Omega}} |f| + [f]_{\alpha; \Omega}  < + \infty .$
\item[$\bullet$] $f \in H^{\alpha+1} (\overline{\Omega})$ if 
$$||f||_{H^{\alpha+1} (\overline{\Omega}) } := \sup _{\overline{\Omega}} |f| +  \sum_{i=1}^n  \sup _{\overline{\Omega}} |D_if| + \sum_{i=1}^n [D_if]_{\alpha; \Omega} +\langle f \rangle _{\alpha+1; \Omega}  < + \infty.$$
\item[$\bullet$] $f \in H^{\alpha+2} (\overline{\Omega})$ if
\begin{align*}
||f||_{H^{\alpha+2} (\overline{\Omega}) } :=& \sup _{\overline{\Omega}} |f| +  \sum_{i=1}^n  \sup _{\overline{\Omega}} |D_if| +\sup _{\overline{\Omega}} |f_t|+ \sum_{i,j=1}^n  \sup _{\overline{\Omega}} |D_{ij}^2f|\\
&+[f_t]_{\alpha; \Omega}
+ \sum_{i,j=1}^n [D_{ij}^2f]_{\alpha; \Omega} +\sum_{i=1}^n \langle D_i f \rangle _{\alpha+1; \Omega}  < + \infty .
\end{align*}
\end{enumerate}
Due to the nonlinear character of our problem, we will mainly prove $H^{\alpha+1}$ and $H^{\alpha+2}$-regularity results in the punctual sense at a point. We say that $u$ is punctually $H^{\alpha+1}$ at a point $P_1 \in \overline{\Omega}$ if there exists $R_{1;P_1}(X)= A_{P_1}+ B_{P_1} \cdot (X-X_1)$, where $A_{P_1} \in \R$ and $B_{P_1} \in \R^{n}$ and some cylinder $\overline{Q}_{r_1}(P_1) \subset \Omega$, so that for any $0<r<r_1$,
$$|u(X,t) - R_{1;P_1}(X)| \leq K \ r^{1+\alpha}, \ \ \text{ for every } \ (X,t) \in \overline{Q}_{r}(P_1)$$
for some constant $K>0$. We say that  $u$ is punctually $H^{\alpha+2}$ at a point $P_1 \in \overline{\Omega}$ if the above holds when we replace $R_{1;P_1}(X)$ by $R_{2;P_1}(X,t)= A_{P_1}+ B_{P_1} \cdot (X-X_1) + C_{P_1} (t-t_1) + \frac{1}{2} (X-X_1)^\tau D_{P_1} (X-X_1)$, where $A_{P_1}, C_{P_1} \in \R, B_{P_1} \in \R^{n}$ and $D_{P_1} \in \R_{n \times n}$ and estimating by $r^{2+\alpha}$ instead of $r^{1+\alpha}$.
Note that when we study points on a flat part of the boundary, cylinders in the above definitions are replaced by half-cylinders. 

The nonlinear operator $F$ is uniformly elliptic which means that there exist constants $0<\lambda \leq \Lambda $ such that
\begin{align} \label{ell_cond}
\lambda ||N||_\infty \leq F(M+N,X,t) - F(M,X,t) \leq \Lambda ||N||_\infty
\end{align}
for every $M, N \in S_n$ with $N \geq 0$ and $(X,t) \in \Omega$, where we denote by $S_n$ the space of symmetric $n \times n$ matrices with real entries.

\begin{defn} \label{visc_sol1}
We say that a continuous $u$ is a viscosity subsolution (supersolution) of $F(D^2u)-u_t=f$ in $\Omega$ if, whenever a smooth test function $\phi $ touches $u$ by above (below) at some point $(X_0,t_0) \in \Omega$ we have that
\begin{equation}\label{visc_subsol}
F(D^2 \phi (X_0,t_0),X_0,t_0) - \phi_t(X_0,t_0) \geq \ (\leq ) \ f(X_0,t_0).
\end{equation}
Recall that we say that $v$ touches $u$ by above (below) at a point $(X_0,t_0)$ if $u(X_0,t_0)=v(X_0,t_0)$ and $u \leq \ (\geq) \ v$ in some cylinder $Q_r(X_0,t_0)$. We say that $u$ is a viscosity solution of $F(D^2u)-u_t=f$ in $\Omega$ if it is both a viscosity subsolution and supersolution.
\end{defn}

\begin{defn} \label{Sclass}
We say that $u \in \underline{S}_p (\lambda,\Lambda,f)$ in $\Omega$, if it is a viscosity subsolution of 
$$\mathcal{M}^+ (D^2u(X,t),\lambda,\Lambda)-u_t(X,t)=f(X,t)$$ 
in $\Omega$ and that $u \in \overline{S}_p (\lambda,\Lambda,f)$ in $\Omega$, if it is a viscosity supersolution of 
$$\mathcal{M}^- (D^2u(X,t),\lambda,\Lambda)-u_t(X,t)=f(X,t)$$
in $\Omega$, where $\mathcal{M}^\pm$ denote the Pucci's extremal operators. In addition we define, $S_p (\lambda,\Lambda,f) := \underline{S}_p (\lambda,\Lambda,f) \cap \overline{S}_p (\lambda,\Lambda,f).$
\end{defn}

\begin{defn} \label{oblique_cond}
We say that $\beta \cdot Du \geq \ (\leq ) \ g$, on $Q^*_r$ in the viscosity sense if whenever we take any point $P_0=(x_0,0,t_0) \in Q^*_r$ and a smooth test function $\phi$ that touches $u$ by above (below) at $P_0$ in some half-cylider $Q^+_\rho(P_0) \subset Q^+_r$ then we must have that $\beta (x_0,t_0)\cdot D\phi  (P_0) \geq \ (\leq ) \ g(x_0,t_0).$ If both hold at the same time we say that $\beta \cdot Du = g$ on $Q^*_r$ in the viscosity sense.
\end{defn}

Note that a special case of the oblique-type condition is when $\beta(x,t)=(0,\dots,0,1) \in \R^n$ for every $(x,t) \in Q_r^*$ which is the \textit{Neumann boundary condition}.

\begin{preremark} \label{curved_bdary}
Due to the local character of our approach, in what follows we will always assume that $u$ equals to $u_0$ on $\partial_pQ_1^+ \setminus Q_1^*$.
\end{preremark}

We call a constant $C>0$ universal if it depends only on $n, \lambda, \Lambda, \delta_0$ and other constants related to function $\beta$.

%\section{Preliminaries}

\subsection{An Aleksadrov-Bakel'man-Pucci-Tso type estimate}
We prove an ABPT-type maximum principle corresponding to our oblique derivative problem (see \cite{LZ}, \cite{ms1} for the elliptic case).

Recall that the convex envelope of a function $u \in C \left( \overline{Q}_1^+ \right)$ is defined as
\begin{align*}
\Gamma(u)(X,t)&:=\sup \{v(X,t) : v(Z,s) \leq u(Z,s)  \ \text{ in } \ Q_1^+ \ \text{ and } \ v \text{ is convex w.r.t. }  Z \text{ and decreasing w.r.t. } s \} \\
&=\sup \{ \xi \cdot X+h : \xi \cdot Z +h \leq u(Z,s), \text{ for every } \ Z \in B_1^+, s \in (-1,t] \}.
\end{align*}
Moreover for smooth enough $v$ we define the function
$$G(v)(X,t)=(Dv(X,t),v(X,t)-X \cdot Dv(X,t)).$$
Note that $\det D_{(X,t)} G(v)=v_t \ \det D^2v$.

\begin{thm} \label{ABP-Oblique}
(ABPT-estimate in the case of Oblique boundary data). Let $f \in C \left( \overline{Q}_r^+ \right), g \in C \left( \overline{Q}_r^* \right)$ and $u \in \overline{S}_p(\lambda, \Lambda,f) \cap C\left( \overline{Q}_r^+\right)$ with $\beta \cdot Du \leq g$ on $Q_r^*$ in the viscosity sense. Then,
\begin{equation} \label{ABP-Oblique_est}
\sup_{Q_r^+} u^- \leq \sup_{\partial_pQ^+_r \setminus Q_r^*} u^- +C r \left( \int_{\{u =
\Gamma_u\}} \abs{f^+(X,t)}^{n+1} \dd X \dd t \right)^{1/n+1} + C r \sup_{Q_r^*}g^+
\end{equation}
where $\Gamma_u$ is the convex envelope of $-u^-:=\min\{u,0\}$ in $Q_r^+$ and $C>0$ is universal constant.
\end{thm}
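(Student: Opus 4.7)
The plan is to adapt Tso's parabolic ABP argument (see e.g.~\cite{Wang1}) to the half-cylinder setting, treating the flat face $Q_r^*$ as an extra locus of contact points whose contribution is controlled by the oblique condition. After subtracting $\sup_{\partial_p Q_r^+ \setminus Q_r^*} u^-$ from $u$ I may assume $u\geq 0$ on $\partial_pQ_r^+ \setminus Q_r^*$, so that $M:=\sup_{Q_r^+} u^-$ is precisely the quantity to be bounded. Set $\Gamma:=\Gamma_u$, the convex-in-$X$/decreasing-in-$t$ envelope of $-u^-$, and let $E:=\{\Gamma=-u^-\}$ be the coincidence set, decomposed as $E=E^\circ\cup E^*$ with $E^\circ\subset (Q_r^+)^\circ$ and $E^*\subset Q_r^*$.

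A standard sub-gradient argument in the spirit of Tso shows that the image $G(\Gamma)(E)$ contains every pair $(\xi,h)\in\R^{n+1}$ whose affine function $\xi\cdot X+h$ lies below $-u^-$ in $Q_r^+$ and touches it somewhere; sliding supporting planes gives a lower bound $|G(\Gamma)(E)|\gtrsim c\, M^{n+1}/r^{n-1}$ (after the natural parabolic rescaling). This is the base inequality into which the upper bounds on $|G(\Gamma)(E^\circ)|$ and $|G(\Gamma)(E^*)|$ will be fed.

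For interior contact points $P_0\in E^\circ$, the envelope $\Gamma$ is a.e.~twice differentiable in $X$ and has a one-sided time derivative; since $u\in\overline{S}_p(\lambda,\Lambda,f)$ and $\Gamma$ touches $u$ from below, $\mathcal{M}^-(D^2\Gamma)-\Gamma_t\leq f$ a.e.~on $E^\circ$. Because $D^2\Gamma\geq 0$ and $\Gamma_t\leq 0$, the arithmetic-geometric mean inequality then yields $(-\Gamma_t)\det D^2\Gamma\leq C(f^+)^{n+1}$ a.e.~on $E^\circ$. Since $|\det D_{(X,t)} G(\Gamma)|=|\Gamma_t\det D^2\Gamma|$, the area formula gives
\[
|G(\Gamma)(E^\circ)|\ \leq\ C\int_{E^\circ}|f^+|^{n+1}\,\dd X\dd t,
\]
which is the origin of the $f$-term in \eqref{ABP-Oblique_est}.

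The main obstacle is the boundary contact set $E^*$, where the oblique hypothesis must be used decisively. At a point $P_0=(X_0,t_0)\in E^*$, any affine function $\ell(X)=\xi\cdot X+h$ supporting $-u^-$ from below at $P_0$ is, after adding a time-linear piece of slope $\Gamma_t(P_0)$, a smooth test function touching $u$ from below in a half-cylinder $Q_\rho^+(P_0)$; by Definition~\ref{oblique_cond} this forces $\beta(X_0,t_0)\cdot\xi\leq g(X_0,t_0)$. Combined with $\beta_n\geq\delta_0$ and $\|\beta\|_\infty\leq 1$, I obtain
\[
\xi_n\ \leq\ \delta_0^{-1}\bigl(g^+(X_0,t_0)+|\xi'|\bigr),
\]
so that for each admissible tangential component $\xi'\in\R^{n-1}$ the normal component $\xi_n$ lies in an interval of length $\lesssim \sup_{Q_r^*}g^+$. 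Hence $G(\Gamma)(E^*)$ is trapped in a thin slab whose $(n+1)$-dimensional volume, after the parabolic scaling, is of order $(r\sup g^+)^{n+1}/r^{n-1}$; this produces the $Cr\sup g^+$ contribution. Combining the lower bound on $|G(\Gamma)(E)|$ with the two upper bounds on $|G(\Gamma)(E^\circ)|$ and $|G(\Gamma)(E^*)|$ and extracting $M$ yields \eqref{ABP-Oblique_est}. The hardest point is the rigorous pointwise justification of the oblique inequality for the sub-gradient $\xi$, since $\Gamma$ is only semi-convex and not a priori smooth on $Q_r^*$; this is handled by approximating $\Gamma$ through smooth supporting planes and using the viscosity formulation of the boundary condition in Definition~\ref{oblique_cond}.
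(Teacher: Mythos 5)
Your proposal follows the more classical ABPT template (decompose the contact set into an interior part $E^\circ$ and a boundary part $E^*$, bound the image $G(\Gamma)(E^\circ)$ by the $f$-integral, and bound $G(\Gamma)(E^*)$ by a $g^+$-term), but the key step for $E^*$ does not go through as written. From the oblique condition at a boundary contact point you obtain only the \emph{one-sided} inequality
\[
\beta(X_0,t_0)\cdot\xi\ \leq\ g(X_0,t_0),\qquad\text{i.e.}\qquad \xi_n\ \leq\ \delta_0^{-1}\bigl(g^+(X_0,t_0)+|\xi'|\bigr).
\]
There is no matching lower bound on $\xi_n$ coming from the hypotheses; the best a priori lower bound you can extract from convexity and $-M\leq\Gamma\leq 0$ is of size $\gtrsim -CM$, not $\gtrsim -\sup g^+$. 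Hence the set of possible normal slopes $\xi_n$ at boundary contact points is an interval of length comparable to $M$ (and in any case depending on $|\xi'|$), not of length $\lesssim\sup_{Q_r^*}g^+$. So the claim that $G(\Gamma)(E^*)$ is trapped in a slab of $(n+1)$-volume of order $(r\sup g^+)^{n+1}/r^{n-1}$ is not justified, and without it the final combination $M^{n+1}\lesssim\int(f^+)^{n+1}+(\sup g^+)^{n+1}$ does not follow.

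The paper's proof avoids this difficulty by \emph{not} estimating the measure of the boundary contribution at all. After a preliminary dichotomy (if $\sup_{Q_1^*}g^+\geq \tfrac{\delta M}{16}$ the estimate is trivial, so one may assume $\sup_{Q_1^*}g^+<\tfrac{\delta M}{16}$), it introduces the explicit set
\[
\mathcal{A}=\Bigl\{(\xi,h):\ |\xi|<\tfrac{M}{8},\ \tfrac{M}{2}\leq -h\leq \tfrac{3M}{4},\ \xi_n\geq\tfrac{M}{8},\ |\xi'|\leq\tfrac{\delta M}{16}\Bigr\},
\]
which has measure $\sim M^{n+1}$, and shows $\mathcal{A}\subset G(\Gamma_u)(Q_1^+)$. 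The essential observation is that for $(\xi,h)\in\mathcal{A}$, the affine function $P(X)=\xi\cdot X+h$ must first touch $-u^-$ at an \emph{interior} point: it cannot touch on $\partial_pQ_1^+\setminus Q_1^*$ since $P<0\leq u$ there, and it cannot touch on $Q_1^*$ since there one would need $\beta\cdot\xi\leq g$, yet $\beta\cdot\xi\geq\delta\xi_n-|\xi'|\geq\tfrac{\delta}{2}\xi_n>\sup_{Q_1^*}g^+$ under the assumed smallness of $g^+$. Thus the boundary contact set never enters the measure count, and the $g^+$-term in \eqref{ABP-Oblique_est} is produced entirely by the initial dichotomy rather than by a volume estimate for $G(\Gamma)(E^*)$. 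To repair your argument you would need either a genuine two-sided confinement of $\xi_n$ at boundary contacts (which the hypotheses do not supply) or to restrict, as the paper does, to a cone of gradients with $\xi_n$ large and $|\xi'|$ small so that boundary touching is excluded outright.
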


\begin{proof}
For convenience take $r=1$ and $\sup_{\partial_pQ^+_1 \setminus Q_1^*} u^-=0$. We denote by $M:= \sup_{Q_1^+}u^->0$ then there exists $(X_0,t_0) \in Q_1^+ \cup Q_1^*$ (since $u \geq 0$ on $\partial_pQ^+_1 \setminus Q_1^*$) so that $u^-(X_0,t_0)=M$.

Note that if $\sup_{Q_1^*}g^+ \geq \frac{\delta M}{16}$ then (\ref{ABP-Oblique_est}) holds. So we consider the case when $\sup_{Q_1^*}g^+ < \frac{\delta M}{16}$.

Since $\Gamma_u \in H^2(\overline{Q}_1^+)$ then we can show (see \cite{Wang1} or \cite{ImbertSil} (for more details) and references therein), using area formula 
$$|G(\Gamma_u)(Q_1^+)| \leq \int_{Q_1^+ \cap \{u=\Gamma_u\}} -(\Gamma_u)_t\det (D^2\Gamma_u)\ dXdt$$
and $-(\Gamma_u)_t+\lambda \Delta (\Gamma_u) \leq f^+$, in $\{u=\Gamma_u\}$. Thus we get $\ |G(\Gamma_u)(Q_1^+)| \leq \int_{Q_1^+ \cap \{u=\Gamma_u\}} (f^+)^{n+1}\ dXdt$.

We consider the set
$$\mathcal{A}:=\{(\xi,h) \in \R^n\times \R: |\xi|<\frac{M}{8}<\frac{M}{2}\leq -h \leq \frac{3M}{4}, \xi_n \geq \frac{M}{8}, |\xi'|\leq \frac{\delta M}{16} \}$$
where $\xi':=(\xi_1,\dots,\xi_{n-1})$. We will show that $\mathcal{A} \subset G(\Gamma_u)(Q_1^+)$. Take any $(\xi,h) \in \mathcal{A}$ and we consider $P(X):= \xi \cdot X+h.$ Then we observe that for every $X \in \overline{B}_2$, $P(X)\leq |\xi||X|+h\leq -\frac{M}{4}<0$. In addition one has $P(X_0)-u(X_0.t_0)\geq -|\xi||X_0|+h+M\geq \frac{M}{8}>0$, that is $\max_{\overline{B}_1^+} (P(X)-u(X,t_0))\geq 0.$

Define
$$t_1:=\sup \lbrace -1 \leq t \leq 0 : \text{ for every } s \in [-1,t], \ \max_{\overline{B}_1^+} (P(X)-u(X,s))<0 \rbrace.$$
Note that $t_1 \leq t_0 \leq 0$ and from the continuity of $P-u$ with respect to $s$ we have that 
$$P(X_1)-u(X_1,t_1)=\max_{\overline{B}_1^+} (P(X)-u(X,t_1))=0.$$
This shows that $(X_1,t_1) \in Q_1^+$. Indeed if $(X_1,t_1) \in \partial_pQ^+_1 \setminus Q_1^*$ then we would have $u(X_1,t_1) \geq 0$ and since $P(X_1)<0$, $P(X_1)-u(X_1,t_1)<0$ we get a contradiction. If now $(X_1,t_1) \in Q_1^*$, $P$ touches $u$ by below at $(X_1,t_1)$, then $\beta(x_1,t_1) \cdot \xi \leq g(x_1,t_1)$ but
\begin{align*}
\beta(x_1,t_1) \cdot \xi \geq \delta \xi_n - |\xi '|||\beta||_{L^\infty}\geq \delta \xi_n - \frac{\delta \xi_n}{2} > \sup_{Q_1^*} g^+
\end{align*}
since $\xi_n > \frac{M}{8}>\frac{2}{\delta} \sup_{Q_1^*} g^+$ and we get a contradiction.

Combining the above we have that $P(X) \leq -u^-(X,t)$, for every $X \in \overline{B}_1^+$, $-1<t\leq t_1$ and $P(X_1)=-u^-(X_1,t_1)$. Then $P(X)$ touches $\Gamma_u$ by below at $(X_1,t_1) \in Q_1^+$, thus $G(\Gamma_u)(X_1,t_1) = (\xi, h)$. Since $|\mathcal{A}| = C(\delta,n) M^{n+1}$ the proof is complete.
\end{proof}

\subsection{A useful change of variables}\label{changeofvar}
Here we consider the case when the function  $\beta$ is constant. In this case we see that using a suitable change of variables, a viscosity problem of the form
\begin{align} \label{oblique_constant}
\begin{cases}
F(D^2u) -u_t = 0, &\ \ \ \ \ \text{ in } \ Q_1^+ \\
\beta \cdot Du = 0, &\ \ \ \ \ \text{ on } \ Q_1^*\\
\end{cases}
\end{align}
can be transformed into a nonlinear Neumann parabolic problem 
\begin{align} \label{neumann}
\begin{cases}
\tilde{F}(D^2v) -v_t = 0, &\ \ \ \ \ \text{ in } \ \tilde{Q_1^+} \\
v_y = 0, &\ \ \ \ \ \text{ on } \ Q_1^*\\
\end{cases}
\end{align}
where $\tilde{F}$ is also an elliptic operator on $S_n$ and $\tilde{Q_1^+}$ a suitable "half-set".

\par More precisely, consider the transformation
\begin{align*}
A:=\left(  \begin{matrix}
 1 \dots 0 &  \frac{\beta_{1}}{\beta_n}\\
 \ddots \\
0 \dots 1 & \frac{\beta_{n-1}}{\beta_n} \\
0 \dots 0 & 1
\end{matrix} \right).
\end{align*}
For a smooth function $\psi = \psi(z,w,t)$ we define $\phi(x,y,t):= \psi \left( A(x,y),t \right)$ and we can easily check that $D^2 \phi = A^\tau D^2 \psi A$ and $D^2\psi = (A^{-1})^\tau D^2\phi A^{-1}.$

Define $\tilde{F}(M):= F((A^{-1})^\tau M A^{-1})$. Then $\tilde{F}$ is elliptic and its ellipticity constants are universal multiples of $\lambda$ and $\Lambda$. For, we use the fact that the norms $||A||_\infty$, $||A^{-1}||_\infty, ||A^\tau||_\infty$ and $||(A^{-1})^\tau||_\infty$ are bounded from above by $\frac{\delta_0+1}{\delta_0}=:C_{\delta_0}$ combined with the ellipticity of $F$. We only need to be careful in observing that for $M, N \in S_n$ with $N \geq 0$ then $(A^{-1})^\tau N A^{-1}$ is symmetric (easily checked by calculations) and positive definite. To get the positivity we observe that $\det \left((A^{-1})^\tau N A^{-1}\right) = \det(N)\geq 0$, since $\det A=1$ and $N$ is non-negative definite. Moreover, $\left((A^{-1})^\tau N A^{-1}\right)_{ij} = \sum_{l,k=1}^n N_{kl}b_{ki}b_{lj}=N_{ij}$, for $i <n,\ j<n$. Then Sylvester's criterion  gives that $(A^{-1})^\tau N A^{-1} \geq 0$.

\par We observe also that the transformation $A$ maps the hyper-plane $\{y=0\}$ identically into itself and the half-space $\{y>0\}$ into itself (so does $A^{-1}$). So,  $\tilde{Q^+_1}:= \{ (x,y,t)=(A^{-1}(z,w),t), \ \text{ for } \ (z,w,t) \in Q_1^+\}$ lies in the half-space $\{y>0\}$ and $Q_1^*$ is part of its parabolic boundary.

Note that combining all the above one can ensure that if $u(Z,t)$ is a viscosity solution of (\ref{oblique_constant}) then $v(X,t)=u(AX,t)$ is a viscosity solution of (\ref{neumann}). This fact will be useful later to prove regularity for problems of the form (\ref{oblique_constant}) using the regularity of problems of the form (\ref{neumann}).

\section{H\"older Estimates}\label{Ca}

In the present section we prove H\"older regularity up to the flat part of the boundary at which we assume a viscosity oblique derivative condition proving first a boundary Harnack-type inequality.

\begin{thm}(Up to the flat boundary $H^\alpha$-regularity). \label{bdary_C^a_oblique}
Let $f$ and $g$ be continuous and bounded in $Q_1^+$ and $Q_1^*$ respectively. Assume that $u \in C\left( Q_1^+ \cup Q_1^*\right) $ is such that
$$\begin{cases} 
u \in S_p(\lambda, \Lambda,f), &\ \ \ \ \text{ in } \ \ Q_1^+ \\
\beta \cdot Du =g,&\ \ \ \ \text{ on } \ \ Q_1^*, \ \text{ in the viscosity sense}.
\end{cases}$$
Then for universal constants $C>0$ and $0<\alpha <1$, we have that $u \in H^\alpha \left( \overline{Q}^+_{1/2} \right)$, with an estimate 
\begin{equation} \label{C^a_ineq2_oblique}
||u||_{H^\alpha \left( \overline{Q}^+_{1/2} \right) } \leq C \left( ||u||_{L^\infty \left( Q_1^+ \right)} + ||f||_{L^{n+1}\left( Q_1^+ \right)}+||g||_{L^\infty \left( Q_1^* \right)} \right).
\end{equation}
\end{thm}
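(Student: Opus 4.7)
The plan is to establish H\"older continuity at flat boundary points by the classical oscillation-decay iteration, driven by a boundary weak Harnack inequality (as announced at the opening of the section) together with the interior parabolic Krylov--Safonov H\"older estimate for the $S_p$-class.

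First, I would prove a boundary weak Harnack: there exist universal $c_0\in(0,1)$ and $C>0$ such that whenever $u\geq 0$ in $Q_1^+$ lies in $\overline{S}_p(\lambda,\Lambda,f)$, satisfies $\beta\cdot Du\leq g$ on $Q_1^*$ in the viscosity sense, and exceeds $1$ at some point of a reference past sub-half-cylinder, then
\[
\inf_{Q_{1/2}^+} u \;\geq\; c_0 \;-\; C\bigl(\|f\|_{L^{n+1}(Q_1^+)} + \|g\|_{L^\infty(Q_1^*)}\bigr).
\]
The quantitative device is the ABPT estimate of Theorem \ref{ABP-Oblique} applied to the negative part of $u-\varphi$ for a barrier $\varphi$ that (i) is a classical supersolution of the relevant Pucci extremal operator in $Q_1^+$, (ii) is non-positive on $\partial_p Q_1^+\setminus Q_1^*$ except where $u$ is already known to be large, and (iii) satisfies $\beta\cdot D\varphi\leq 0$ on $Q_1^*$. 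A workable choice is an exponential translate of the form $\varphi(X,t)=c_1\bigl(e^{-\mu|X-X^*|^2}-e^{-\mu R^2}\bigr)\eta(t)$ with $X^*=(x^*,-s_0)$ placed just below $Q_1^*$ and with $\mu, s_0$ tuned so that $\beta_n\geq\delta_0$ and $\|\beta\|_{L^\infty}\leq 1$ force the correct sign of $\beta\cdot D\varphi$ uniformly at $y=0$.

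Next, the weak Harnack is converted into oscillation decay at every $P_0\in Q_{1/2}^*$. Writing $M$ and $m$ for the supremum and infimum of $u$ over $Q_{2r}^+(P_0)$, one of the rescaled functions $(M-u)/(M-m)$ and $(u-m)/(M-m)$ is non-negative, belongs to $\overline{S}_p$ (with the appropriate sign of $g$ on $Q_1^*$, an assumption preserved since $\beta\cdot Du=g$), and exceeds $1/2$ somewhere in a past reference sub-half-cylinder. Applying the boundary weak Harnack after parabolic rescaling (which preserves the obliqueness constant $\delta_0$) yields
\[
\osc_{Q_r^+(P_0)}u \;\leq\; \theta\,\osc_{Q_{2r}^+(P_0)}u \;+\; Cr^{\gamma}\bigl(\|f\|_{L^{n+1}(Q_1^+)}+\|g\|_{L^\infty(Q_1^*)}\bigr)
\]
for some universal $\theta\in(0,1)$ and $\gamma>0$. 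Iterating on dyadic scales produces a universal H\"older modulus with some exponent $\alpha\in(0,1)$ at every flat boundary point of $Q_{1/2}^*$.

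Finally, the boundary modulus combined with the interior parabolic Krylov--Safonov H\"older estimate, applied on cylinders of radius comparable to $\dist(\,\cdot\,,Q_1^*)$ and interpolated with the boundary modulus in the standard way, upgrades to $u\in H^\alpha(\overline{Q}_{1/2}^+)$ with the claimed bound (\ref{C^a_ineq2_oblique}). The main obstacle is the barrier construction in the first step: while the Neumann case $\beta=e_n$ admits classical radial barriers, for a general oblique vector field the gradient of $\varphi$ at $y=0$ must obey $\delta_0\,\partial_y\varphi+\sum_{i<n}\beta_i\partial_{x_i}\varphi\leq 0$ uniformly in $x$, which forces a delicate balance between the radial decay parameter $\mu$ and the vertical offset $s_0$ in terms of $\delta_0$ and $\|\beta\|_{L^\infty}$. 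Once that balance is arranged, the remainder is a parabolic adaptation of the elliptic oblique scheme of \cite{ms1}.
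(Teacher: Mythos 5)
Your proposal follows essentially the same route as the paper: a boundary Harnack-type inequality proved by combining the interior Krylov--Safonov Harnack estimate with a barrier comparison and the ABPT estimate (Theorem \ref{ABP-Oblique}), followed by the standard oscillation-decay iteration. The only substantive difference is the barrier: you propose an exponential translate $\varphi(X,t)=c_1\bigl(e^{-\mu|X-X^*|^2}-e^{-\mu R^2}\bigr)\eta(t)$ centered at a point $X^*$ pushed below the boundary so that $\partial_y\varphi<0$ dominates the tangential contributions at $y=0$, while the paper uses an explicit space--time quadratic polynomial $b(X,t)$ on a thin half-slab $H(r,\rho)$, with the slab height $\rho r$ chosen small so that the term $\frac{B}{4\rho r}\beta_n$ dominates the tangential terms $\frac{2B}{\bar r^2}\beta\cdot(x,0)$ in $\beta\cdot Db$. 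Both achieve the same sign condition $\beta\cdot D\varphi\le 0$ on the flat boundary via a balance against $\delta_0$; the polynomial is somewhat easier to verify explicitly in the parabolic setting, since the supersolution inequality $\mathcal{M}^-(D^2 b)-b_t\ge 0$ and the parabolic boundary values are immediate computations, whereas the exponential requires tuning the time factor $\eta$ against $\mu$ and $s_0$. You also frame the Harnack step as a weak Harnack with a ``$u$ exceeds $1$ at a reference interior point'' hypothesis; the paper gets the same information by first invoking interior Harnack on $Q_{r/2}(A,0)$ to transfer a sup-bound into an inf-bound at height $\rho r$, which is the step that makes the ``exceeds $1$ somewhere'' input usable, and which you will need to make explicit as well.
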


Combining the interior Harnack inequality with a barrier argument we get the following boundary Harnack inequality (see \cite{LZ}, \cite{LiebTrud} for the elliptic case).

\begin{thm}(Boundary Harnack inequality). \label{bdary_Harnack_oblique}
Let $f$ and $g$ be continuous and bounded in $Q_1^+$ and $Q_1^*$ respectively. Assume that $u \in \left( Q_1^+ \cup Q_1^*\right)$, $u \geq 0$ is such that
$$\begin{cases} 
u \in S_p(\lambda, \Lambda,f), &\ \ \ \ \text{ in } \ \ Q_1^+ \\
\beta \cdot Du =g,&\ \ \ \ \text{ on } \ \ Q_1^*, \ \text{ in the viscosity sense}.
\end{cases}$$
Then for universal constants $C>0$ and $0<\rho <1$, we have
\begin{equation} \label{bd_Harnack}
\sup_{K_{\frac{rR}{2}}(A,0)} u \leq C \left( \inf_{H \left(\frac{r}{4},\rho \right)} u + r^{\frac{n}{n+1}}||f||_{L^{n+1}\left( Q_1^+ \right)}+r||g||_{L^\infty \left( Q_1^* \right)} \right)
\end{equation}
for every $0<r<\frac{1}{2}$, where $A=(0,\dots,0,r) \in \R^n$, $K_R := B_{\frac{R^2}{2 \sqrt{2} }}(0,0) \times \left[ -R^2 + \frac{3}{8} R^4, -R^2+ \frac{4}{8} R^4 \right]$, for some universal $0<R <<1$ and 
$$H(r,\rho):= \{ (X,t) : |x| < \frac{rR^2}{4}, 0<y<\rho r, -\frac{r^2R^4}{16}<t\leq 0\}.$$
\end{thm}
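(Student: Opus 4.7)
The approach is the classical Krylov--Safonov strategy for boundary Harnack inequalities: combine the interior parabolic Harnack inequality, applied at the distinguished interior point $A=(0,\dots,0,r)$ (which sits at distance $r$ above the flat part of the boundary), with a barrier argument that propagates a lower bound on $u$ from height $y\sim r$ down to the thin slab $H(r/4,\rho)$, where the oblique condition is used. The ABPT estimate of Theorem~\ref{ABP-Oblique} replaces the usual ABP maximum principle in the barrier comparison. First I would rescale: setting $\tilde u(X,t):=u(rX,r^{2}t)$ reduces matters to $r=1$, with $\tilde f(X,t):=r^{2}f(rX,r^{2}t)$ and $\tilde g(x,t):=r\,g(rx,r^{2}t)$; the norms then transform into precisely the factors $r^{n/(n+1)}\|f\|_{L^{n+1}}$ and $r\|g\|_{L^{\infty}}$ appearing in \eqref{bd_Harnack}.

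With $r=1$ the ball $B_{1/2}(A)$ lies in $\{y>1/2\}\subset Q_{1}^{+}$, so the interior parabolic Harnack inequality (as in Wang~\cite{Wang1}) applies to the nonnegative $u\in S_{p}(\lambda,\Lambda,f)$ on $Q_{1/2}(A,0)$. Its past-to-future asymmetry is what dictates placing the time slab of $K_{R/2}(A,0)$ strictly before the time slab of $H(1/4,\rho)$, and it produces
\begin{equation*}
\sup_{K_{R/2}(A,0)}u\le C\bigl(u(A^{*})+\|f\|_{L^{n+1}(Q_{1}^{+})}\bigr)
\end{equation*}
for a suitable ``future'' point $A^{*}$ whose $y$-coordinate remains comparable to $1$.

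The heart of the argument is the construction of a barrier $w$ on a region $\Omega\subset Q_{1}^{+}$ containing $A^{*}$ whose flat boundary meets $Q_{1}^{*}$ in a neighborhood of the projection of $H(1/4,\rho)$, enjoying the properties: (i) $\mathcal{M}^{-}(D^{2}w)-w_{t}\ge 0$ classically in $\Omega^{\circ}$; (ii) $\beta\cdot Dw\ge 0$ on $\partial\Omega\cap Q_{1}^{*}$, which can be arranged via $\beta_{n}\ge\delta_{0}$ and $\|\beta\|_{L^{\infty}}\le 1$; (iii) $w\le 0$ on $\partial_{p}\Omega\setminus Q_{1}^{*}$; (iv) $w\ge c_{1}>0$ on $H(1/4,\rho)$, while $w(A^{*})$ is bounded above by a universal constant. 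A natural candidate has the form $w(X,t)=\eta(t)\phi(X)$, where $\eta\ge 0$ is a smooth temporal cutoff adapted to the time intervals of $K_{R/2}(A,0)$ and $H(1/4,\rho)$, and $\phi$ is a spatial function designed to make the second-order Pucci inequality and the oblique sign condition hold simultaneously (see \cite{LZ,LiebTrud} for analogous elliptic constructions, typically combining an exponential or polynomial in $y$ with radial decay in $x$). Applying Theorem~\ref{ABP-Oblique} to a suitable linear combination of $u$ and $w$ on $\Omega$ and absorbing the $f$- and $g$-contributions then yields
\begin{equation*}
\inf_{H(1/4,\rho)}u\ge c\,u(A^{*})-C\bigl(\|f\|_{L^{n+1}(Q_{1}^{+})}+\|g\|_{L^{\infty}(Q_{1}^{*})}\bigr),
\end{equation*}
which combined with the estimate of the previous step produces \eqref{bd_Harnack}.

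The hard part will be the barrier construction. Enforcing (i)--(iv) simultaneously is delicate, because the Pucci inequality in the bulk forces a positive second-order balance, while the oblique sign condition on the flat boundary must be preserved despite only partial control over the tangential components of $\beta$. The parabolic setting adds the further requirement that $\eta$ must transport the lower bound across a nonzero time gap, from $K_{R/2}(A,0)$ at $t\sim -R^{2}/4$ down to $H(1/4,\rho)$ near $t=0$; the unusual geometry of $K_{R}$ (a time slab of thickness $\sim R^{4}$ positioned at $t\sim -R^{2}$) is engineered precisely so that this temporal diffusion is feasible without the extremal operator forcing $w$ to collapse.
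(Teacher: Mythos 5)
Your overall architecture --- interior parabolic Harnack at a point at height $\sim r$, then a barrier comparison down through a thin slab, with the ABPT estimate of Theorem~\ref{ABP-Oblique} replacing the usual ABP maximum principle --- is indeed the paper's strategy and the right one. But the set of barrier properties you prescribe in (i)--(iv) is mutually inconsistent, so no such $w$ can exist. Indeed, suppose $w$ satisfies (i) $\mathcal{M}^-(D^2w)-w_t\ge 0$ classically in $\Omega^\circ$, (ii) $\beta\cdot Dw\ge 0$ on $\partial\Omega\cap Q_1^*$, and (iii) $w\le 0$ on $\partial_p\Omega\setminus Q_1^*$. Then $-w\in\overline{S}_p(\lambda,\Lambda,0)$ in $\Omega$ and $\beta\cdot D(-w)\le 0$ on the flat part, so Theorem~\ref{ABP-Oblique} applied to $-w$ gives $\sup_{\Omega}\max\{w,0\}\le\sup_{\partial_p\Omega\setminus Q_1^*}\max\{w,0\}=0$, i.e.\ $w\le 0$ throughout $\Omega$, which contradicts (iv) $w\ge c_1>0$ on $H(1/4,\rho)$. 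Equivalently: with (iii) as stated, the comparison of $u$ with $u(A^*)w$ only uses $u\ge 0$ on $\partial_p\Omega\setminus Q_1^*$, which cannot yield a nontrivial lower bound.

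The paper resolves this by \emph{not} requiring the barrier to be nonpositive on the whole non-flat boundary. Taking $\Omega=H(r,\rho)$, the barrier is $\le 0$ only on the lateral pieces $\{|x|=\bar r\}$ and on the initial slice $\{t=-\bar r^2\}$; on the top face $\{y=\rho r\}$ one has $b\le B$ where $B:=\inf_{H'(r,\rho)}u$, and the crucial input from the interior Harnack inequality is $u\ge B$ on that \emph{entire} face $H'(r,\rho)\subset Q_{\frac{rR^2}{2}}(A,0)$ --- not merely control of $u$ at a single point $A^*$. The explicit barrier is
\begin{equation*}
b(X,t)=B-\frac{B}{4}\left[2-\frac{y^2}{(\rho r)^2}-\frac{y}{\rho r}+4\,\frac{|x|^2-t}{\bar r^2}\right]-\frac{||g||_{L^\infty}}{\delta_0}(\rho r-y),\qquad \bar r=\frac{rR^2}{4},
\end{equation*}
a polynomial in $(x,y)$ whose time dependence enters additively through the caloric quadratic $|x|^2-t$, not via the separable ansatz $\eta(t)\phi(X)$ you suggest. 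The last term, linear in $y$, tilts the barrier so that $\beta\cdot Db\ge ||g||_{L^\infty}$ on $\{y=0\}$, which makes the $g$-term in the ABPT bound for $u-b$ vanish; $\rho$ is then chosen small (depending only on $n,\lambda,\Lambda,\delta_0,R$) to make the Pucci inequality and the oblique sign hold simultaneously. In short: correct global plan, but the barrier --- the actual content of the proof --- is both unconstructed and, as you have specified its boundary behavior, impossible.
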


\begin{proof}
For $0<r<\frac{1}{2}$ note that $Q_{r/2}(A,0) \subset \{ (X,t) : |x| < r, \frac{ r}{2}<y<\frac{3 r}{2}, -r^2<t\leq 0\}.$
Then we can apply interior Harnack inequality to $u$ in $Q_{r/2}(A,0)$ (see Theorem 2.4.32 in Section 2.4.3 of \cite{ImbertSil}),
$$\sup_{K_{\frac{r R}{2}}(A,0)} u \leq C \left( \inf_{Q_{\frac{r R^2}{2}}(A,0)} u + r ^{\frac{n}{n+1}}||f||_{L^{n+1}(Q_1)} \right).$$
Let
$$H'(r,\rho):= \{ (X,t) : |x| < \frac{rR^2}{4}, y=\rho r, -\frac{r^2R^4}{16}<t\leq 0\}.$$
Note that if we choose $0<\rho < \frac{\sqrt{3}R^2}{4}$ then $H'(r,\rho ) \subset Q_{\frac{r R^2}{2}}(A,0)$. So we want to show that
\begin{equation}\label{bd_Harnack_1}
B:=\inf_{H '\left(r,\rho \right)} u \leq C \left( \inf_{H \left(\frac{r}{4},\rho \right)} u + r^{\frac{n}{n+1}}||f||_{L^{n+1}\left( Q_1^+ \right)}+r||g||_{L^\infty \left( Q_1^* \right)} \right).
\end{equation}
In other words we want to find a suitable lower bound for $u$ in $H \left(\frac{r}{4},\rho \right)$. We do this comparing $u$ with a suitable barrier function.

For $\bar{r}:=\frac{rR^2}{4}$ we define
$$b(X,t) := B -\frac{B}{4} \left[ 2 - \frac{y^2}{(\rho r)^2}-\frac{y}{\rho r}+4 \left( \frac{|x|^2-t}{\bar{r}^2}\right) \right] - \frac{||g||_{L^\infty \left( Q_1^* \right)}}{\delta_0}(\rho r- y).$$

Then we compute in $H(r,\rho)$ 
$$\mathcal{M}^- (D^2b,\lambda,\Lambda)-b_t=\lambda \frac{B}{2(\rho r)^2}-(n-1)\Lambda \frac{2B}{\bar{r}^2}-\frac{B}{\bar{r}^2}\geq 0.$$ 
by choosing $0<\rho \leq \sqrt{\frac{\lambda R^4}{32\left[2(n-1)\Lambda +1 \right]}}\ $. Hence we have $u-b \in \overline{S}_p (\lambda, \Lambda,f)$, in $H(r,\rho).$

Next, we study $b$ on the parabolic boundary of $H(r,\rho)$. 
On $\overline{H}(r,\rho) \cap \{y=0\}$ we have that
\begin{align*}
\beta \cdot Db &= -\frac{2B}{\bar{r}^2} \beta \cdot (x,0) + \frac{B}{4\rho r}\beta_n+\frac{||g||_{L^\infty \left( Q_1^* \right)}}{\delta_0} \beta_n \geq \frac{B}{r} \left( -\frac{8}{R^2}+\frac{\delta_0}{4 \rho} \right)+||g||_{L^\infty \left( Q_1^* \right)} \\
&\geq ||g||_{L^\infty \left( Q_1^* \right)}, \ \text{ choosing } \ 0<\rho \leq \frac{\delta_0R^2}{32}.
\end{align*} 
On $\{|x|=\bar{r}\}$ we have that
\begin{align*}
b(X,t)&=B -\frac{B}{4} \left( 1 - \frac{y^2}{(\rho r)^2}\right) -\frac{B}{4} \left(1-\frac{y}{\rho r}\right)-B+B\frac{t}{\bar{r}^2} - \frac{||g||_{L^\infty \left( Q_1^* \right)}}{\delta_0}(\rho r- y)\\
&\leq 0 \leq u(X,t).
\end{align*}
The case $\{t=-\bar{r}^2\}$ is treated similarly. Finally on $\{y=\rho r\}$ we have that $b(x, \rho r,t)=B-B\left( \frac{|x|^2-t}{\bar{r}^2}\right) \leq B \leq u(x, \rho r,t).$ Hence, $\beta \cdot D(u-b) \leq 0$, on $\overline{H}(r,\rho)\cap \{y=0\}$ and $u-b \geq 0$, on $\partial_p H(r,\rho) \setminus \overline{H}(r,\rho)\cap \{y=0\}.$

Therefore from Theorem \ref{ABP-Oblique} we have that $u-b \geq -r^{\frac{n}{n+1}}||f||_{L^{n+1}\left( Q_1^+ \right)}$, in $H(r,\rho)$. Then in $H\left(\frac{r}{4},\rho \right)$ we have
\begin{align*}
u +Cr ||g||_{L^\infty \left( Q_1^* \right)} +r^{\frac{n}{n+1}}||f||_{L^{n+1}\left( Q_1^+ \right)} &\geq B -\frac{B}{2} \left[ 1+2\left(\frac{|x|^2-t}{\bar{r}^2}\right)\right] \\
&\geq B-\frac{B}{2}-\frac{B}{2}\cdot\frac{1}{8}=\frac{7B}{16}
\end{align*}
and the proof is complete.
\end{proof}

Theorem \ref{bdary_C^a_oblique} follows in a standard way.

\begin{proof}[Proof of Theorem \ref{bdary_C^a_oblique}]
For $0<r\leq \frac{1}{2}$ we consider quantities
$$M_r:=\sup_{Q_r^+}u, \ \ \ m_r:=\inf_{Q_r^+}u.$$
Then functions $v_1:=M_r-u$, $v_2:=u-m_r$ are non-negative in $Q_r^+$, $v_i \in S_p(\lambda,\Lambda,f)$ in $Q_r^+$ and $b\cdot Dv_i =g$ on $Q_r^*$. We apply Theorem \ref{bdary_Harnack_oblique} to $v_i$ and obtain
$$\osc_{Q_r^+}u \leq \osc_{Q_r^+}u+\osc_{K_{\frac{rR}{4}}(A,0)} u \leq C \left(\osc_{Q_r^+}u - \osc_{H \left(\frac{r}{8},\rho \right)} u + 2r^{\frac{n}{n+1}}||f||_{L^{n+1}\left( Q_r^+ \right)}+2r||g||_{L^\infty \left( Q_r^* \right)} \right)$$
thus
$$\osc_{Q_{\frac{\rho R^2}{2^5}r}^+} u \leq \gamma\osc_{Q_r^+}u +2C\left( r^{\frac{n}{n+1}}||f||_{L^{n+1}\left( Q_r^+ \right)}+r||g||_{L^\infty \left( Q_r^* \right)} \right)$$
where $\gamma := \frac{C-1}{C}<1$, since $Q_{\frac{\rho R^2}{2^5}r}^+ \subset H \left(\frac{r}{8},\rho \right)$. The result follows by a standard iteration argument.
\end{proof}

\section{H\"older Estimates for the first derivatives}\label{C1a}

In this section, we study existence and regularity of the first derivatives of viscosity solutions in the Neumann case (subsection 4.2) and then in the general oblique derivative case (subsection 4.3). To study the Neumann problem we define suitable difference quotients and apply the H\"older estimates proved in the previous section. To do so we have to explore which problem the difference of two solutions satisfies. This is achieved with the aid of suitable approximate solutions defined in subsection 4.1 (the idea had been initially introduced by Jensen for nonlinear elliptic equations). In subsection 4.3, first we use the change of variables of section \ref{changeofvar} and combining with the $H^{1+\alpha}$-estimates for Neumann problems of subsection \ref{H^1+a-neumann-section} we get $H^{1+\alpha}$-estimates for a constant oblique derivative problem. Secondly, we use a standard approximation method (see for example \cite{CC}, Chapter 8) and approximate a general oblique derivative problem by suitable constant oblique derivative problems.

\subsection{Approximate sub/super-solutions}
Let $u \in C \left( Q_1^+ \cup Q_1^* \right),\ \epsilon >0 $ and $0<\rho< \frac{1}{2}$. We define the sub-convolution of $u$ by
$$ u^{\epsilon, \rho} (X,t) = \sup_{(Z,s) \in \overline{Q}^+_\rho} \left( u(Z,s) - \frac{1}{\epsilon} |X-Z|^2 - \frac{1}{\epsilon}(t-s)^2 \right)$$
for any $(X,t) \in  Q_1^+ \cup Q_1^*$. The super-convolution $u_{\epsilon, \rho}$ is defined accordingly taking infimum and adding (instead of subtracting) the paraboloid.

Next we study some basic properties of $u^{\epsilon, \rho} (X,t)$ which will be useful in the sequel. An analog result holds for $u_{\epsilon,\rho}$ as well.

\begin{lemma} \label{propertiesofappr} $\ $
\begin{enumerate}
\item[\textbf{\textit{(i)}}] For $(X_0,t_0) \in Q_1^+ \cup Q_1^*$ there exists a point $(X_0^*,t_0^*) \in \overline{Q}_\rho^+$ so that
$$u^{\epsilon, \rho} (X_0,t_0) = u(X_0^*,t_0^*) - \frac{1}{\epsilon} |X_0-X_0^*|^2 - \frac{1}{\epsilon}(t_0-t_0^*)^2. $$
Moreover, $|X_0-X_0^*|^2 +(t_0-t_0^*)^2 \leq \epsilon \osc_{Q_\rho^+}u$, that is, as $\epsilon$ gets smaller $(X_0^*,t_0^*)$ gets closer to $(X_0,t_0)$.
\item[\textbf{\textit{(ii)}}] $ u^{\epsilon, \rho}$ is continuous in  $Q_1^+ \cup Q_1^*$.
\item[\textbf{\textit{(iii)}}] $u^{\epsilon, \rho} \to u$ uniformly in $\overline{Q}_\rho^+$, as $\epsilon \to 0^+$.
\item[\textbf{\textit{(vi)}}] $ \left( u^{\epsilon, \rho} \right) _y \geq 0$ on $Q_1^*$ in the viscosity sense. 
\end{enumerate}
\end{lemma}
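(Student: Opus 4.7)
The plan is to prove the four items in order; (i)--(iii) are essentially bookkeeping around compactness and convex-function basics, while (iv) is the only substantive one. For (i), fix $(X_0,t_0)$ and note that the map $(Z,s) \mapsto u(Z,s) - \frac{1}{\epsilon}|X_0-Z|^2 - \frac{1}{\epsilon}(t_0-s)^2$ is continuous on the compact set $\overline{Q}_\rho^+$, so the supremum is attained at some $(X_0^*,t_0^*)$. For the oscillation estimate, take $(X_0,t_0) \in \overline{Q}_\rho^+$ (the natural range of interest) and test the supremum against the competitor $(Z,s) = (X_0,t_0)$ to obtain $u^{\epsilon,\rho}(X_0,t_0) \geq u(X_0,t_0)$; rearranging the defining identity then yields $\frac{1}{\epsilon}\bigl(|X_0-X_0^*|^2 + (t_0-t_0^*)^2\bigr) \leq u(X_0^*,t_0^*) - u(X_0,t_0) \leq \osc_{Q_\rho^+} u$.

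For (ii), rewrite
\[
u^{\epsilon,\rho}(X,t) + \frac{1}{\epsilon}\bigl(|X|^2+t^2\bigr) = \sup_{(Z,s) \in \overline{Q}_\rho^+}\Bigl[ u(Z,s) - \frac{1}{\epsilon}(|Z|^2+s^2) + \frac{2}{\epsilon}(X \cdot Z + ts)\Bigr],
\]
a supremum of affine functions of $(X,t)$; the left-hand side is thus convex and hence locally Lipschitz, so $u^{\epsilon,\rho}$ is continuous. For (iii), combining (i) with the uniform continuity of $u$ on $\overline{Q}_\rho^+$ gives, for $(X,t) \in \overline{Q}_\rho^+$, the sandwich $0 \leq u^{\epsilon,\rho}(X,t) - u(X,t) \leq u(X_0^*,t_0^*) - u(X,t)$, whose right-hand side tends to $0$ uniformly as $\epsilon \to 0^+$ by the oscillation bound in (i).

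The substantive claim is (iv). Let $\phi$ be a smooth test function touching $u^{\epsilon,\rho}$ from above at $P_0=(x_0,0,t_0) \in Q_1^*$ in some half-cylinder $Q_r^+(P_0)$, and pick a maximizer $(X_0^*,t_0^*) = ((x_0^*,y_0^*),t_0^*) \in \overline{Q}_\rho^+$ as in (i); by construction $y_0^* \geq 0$. For small $h > 0$, testing the supremum defining $u^{\epsilon,\rho}(x_0,h,t_0)$ against the same competitor $(X_0^*,t_0^*)$ and expanding $(h-y_0^*)^2 = (y_0^*)^2 - 2h y_0^* + h^2$ gives
\[
u^{\epsilon,\rho}(x_0,h,t_0) \geq u^{\epsilon,\rho}(P_0) + \frac{2h y_0^*}{\epsilon} - \frac{h^2}{\epsilon};
\]
combined with $\phi \geq u^{\epsilon,\rho}$ and $\phi(P_0)=u^{\epsilon,\rho}(P_0)$, dividing by $h$ and sending $h \to 0^+$ yields $\phi_y(P_0) \geq 2 y_0^*/\epsilon \geq 0$, which is the required viscosity inequality. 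The only obstacle worth flagging is conceptual: one does not need to transfer any boundary information from $u$ to $u^{\epsilon,\rho}$ --- the inequality $(u^{\epsilon,\rho})_y \geq 0$ is purely a consequence of sup-convolving against a penalty that cannot increase, to first order in $y$, when the argument moves into $\{y > 0\}$ while the maximizer already satisfies $y_0^* \geq 0$.
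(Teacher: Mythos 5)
Your proof is correct and follows essentially the same approach as the paper: compactness and direct testing for (i), combining (i) with uniform continuity for (iii), and in (iv) the same one-sided-difference-quotient argument exploiting $y_0^*\geq 0$ (the paper packages it as the auxiliary function $\Phi$, you expand the square directly, but the computation is identical). The only cosmetic deviation is in (ii), where you invoke the standard fact that $u^{\epsilon,\rho}+\tfrac{1}{\epsilon}(|X|^2+t^2)$ is a supremum of affine functions (hence convex and locally Lipschitz), while the paper runs a short explicit triangle-inequality estimate yielding a $\tfrac{6}{\epsilon}$-Lipschitz bound; both give the claimed continuity.
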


\begin{proof} $\ $
\begin{enumerate}
\item[\textbf{\textit{(i)}}]The first part is immediate. For the second note that 
$$|X_0-X_0^*|^2 + (t_0-t_0^*)^2 = \epsilon \left( u(X_0^*,t_0^*) -u^{\epsilon, \rho} (X_0,t_0) \right)$$
and that $u^{\epsilon, \rho} (X_0,t_0) \geq u (X_0,t_0)$.
\item[\textbf{\textit{(ii)}}]  Take any $(X_1,t_1), (X_2,t_2) \in Q_1^+ \cup Q_1^*$, then for any $(Z,s) \in Q_\rho^+$ we have
\begin{align*}
u^{\epsilon, \rho} (X_1,t_1) &\geq  u(Z,s) - \frac{1}{\epsilon} |X_1-Z|^2 - \frac{1}{\epsilon}(t_1-s)^2\\
&\geq u(Z,s) - \frac{1}{\epsilon} |X_2-Z|^2 - \frac{1}{\epsilon} |X_1-X_2|^2 - \frac{2}{\epsilon} |X_2-Z||X_1-X_2|\\
&\ \ \ \ - \frac{1}{\epsilon}(t_2-s)^2 - \frac{1}{\epsilon}(t_1-t_2)^2  - \frac{2}{\epsilon}|t_2-s||t_1-t_2| \\
& \geq u(Z,s) - \frac{1}{\epsilon} |X_2-Z|^2 - \frac{1}{\epsilon}(t_2-s)^2 - \frac{6}{\epsilon} |X_1-X_2| -\frac{6}{\epsilon}|t_1-t_2|.
\end{align*}
Taking supremum over $Q_\rho^+$ we obtain $
|u^{\epsilon, \rho} (X_1,t_1)-u^{\epsilon, \rho} (X_2,t_2)| \leq \frac{6}{\epsilon} \left( |X_1-X_2| +|t_1-t_2| \right)$.
\item[\textbf{\textit{(iii)}}] Take any $M>0$. We know that $u$ is uniformly continuous in the compact set $\overline{Q}_\rho^+$, so there exists some $\delta(M)>0$ so that $|u(X,t)-u(Z,s)| < M$, for any $(X,t), (Z,s) \in \overline{Q}^+_\rho$ with $|X-Z|, |t-s| < \delta.$ We choose $0<\epsilon < \frac{\delta^2(M)}{\osc_{\overline{Q}_\rho^+}u}$ (note that if $\osc_{\overline{Q}_\rho^+}u=0$ then $u$ as well as $u^{\epsilon,\rho}$ are both identical zero and the result is obvious). Then taking any $(X_0,t_0) \in \overline{Q}_\rho^+$ we have that $|X_0-X_0^*|^2 + (t_0-t_0^*)^2 \leq \delta^2$. Therefore $|u(X_0^*,t_0^*)-u(X_0,t_0)| < M$ and we conclude that $0 \leq  u^{\epsilon, \rho} (X_0,t_0) - u (X_0,t_0) <M$.
\item[\textbf{\textit{(iv)}}] Let $\phi$ be a test function that touches $u^{\epsilon, \rho}$ by above at some point $(X_0,t_0) \in Q_1^*$. Let $(X_0^*,t_0^*) \in \overline{Q}_\rho^+$ be the point in \textbf{\textit{(i)}}. We have 
$$\phi(X,t) \geq u^{\epsilon, \rho}(X,t) \geq u(X_0^*,t_0^*) - \frac{1}{\epsilon} |X-X_0^*|^2 - \frac{1}{\epsilon}(t-t_0^*)^2 $$
in a half-cylinder around $(X_0,t_0)$. In particular $\phi(X_0,t_0)= u(X_0^*,t_0^*) - \frac{1}{\epsilon} |X_0-X_0^*|^2 - \frac{1}{\epsilon}(t_0-t_0^*)^2.$ Hence the function $\Phi(X) =\phi(X,t_0) -u(X_0^*,t_0^*)+\frac{1}{\epsilon} |X-X_0^*|^2 + \frac{1}{\epsilon}(t_0-t_0^*)^2 $ is non-negative near $X_0$ and zero at $X_0$. Therefore
$$ \Phi_y (X_0) = \lim_{h \to 0^+} \frac{\Phi(x_0,h) - \Phi(X_0)}{h} \geq 0.$$
That is, $\phi_y(X_0,t_0) - \frac{2}{\epsilon} y_0^* \geq 0$. But $y_0^* \geq 0$, thus we have that $\phi_y(X_0,t_0) \geq0$.
\end{enumerate}
\end{proof}

\begin{lemma} \label{starpoint}
Assume that $u$ is continuous in $Q_1^+ \cup Q_1^*$ and satisfies the condition $u_y \geq 0$ on $Q_1^*$ in the viscosity sense. Then for any $(X_0,t_0) \in Q_1^+$ the point $(X_0^*,t_0^*)$ of \textbf{\textit{(i)}} in Lemma \ref{propertiesofappr} lies in $\overline{Q}_\rho^+ \setminus Q_\rho^*$.
\end{lemma}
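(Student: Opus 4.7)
The plan is to argue by contradiction: assume the maximizer $(X_0^*, t_0^*)$ lies on $Q_\rho^*$, i.e., $y_0^* = 0$, and derive a contradiction with the viscosity Neumann condition $u_y \geq 0$ on $Q_1^*$. The key observation is that the very defining inequality of the sup-convolution produces, for free, a smooth test function that touches $u$ from above at $(X_0^*, t_0^*)$.

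More precisely, from the definition of $u^{\epsilon,\rho}(X_0,t_0)$ together with the fact that $(X_0^*, t_0^*)$ is a maximizer, we have for every $(Z,s) \in \overline{Q}_\rho^+$:
\begin{equation*}
u(Z,s) \leq u(X_0^*,t_0^*) + \tfrac{1}{\epsilon}\bigl[|X_0-Z|^2 - |X_0-X_0^*|^2\bigr] + \tfrac{1}{\epsilon}\bigl[(t_0-s)^2 - (t_0-t_0^*)^2\bigr].
\end{equation*}
I would then define the right-hand side as $\phi(Z,s)$. By construction $\phi$ is smooth in $(Z,s)$, satisfies $\phi \geq u$ on $\overline{Q}_\rho^+$, and touches $u$ from above at $(X_0^*,t_0^*)$. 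In particular, $\phi$ touches $u$ from above in a half-cylinder around this point, so Definition \ref{oblique_cond} (applied to the Neumann direction $\beta = e_n$) forces $\phi_y(X_0^*,t_0^*) \geq 0$.

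A direct computation gives $\phi_y(Z,s) = -\tfrac{2}{\epsilon}(y_0 - w)$ where $Z = (z,w)$, so under the contradiction hypothesis $y_0^* = 0$ we obtain
\begin{equation*}
\phi_y(X_0^*,t_0^*) = -\tfrac{2}{\epsilon}\, y_0 < 0,
\end{equation*}
since $(X_0,t_0) \in Q_1^+$ forces $y_0 > 0$. This contradicts the inequality derived from the viscosity Neumann condition, and hence $y_0^* > 0$, i.e., $(X_0^*, t_0^*) \in \overline{Q}_\rho^+ \setminus Q_\rho^*$.

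There is not much of a genuine obstacle here: the argument is essentially bookkeeping once one realizes that the paraboloid in the sup-convolution itself plays the role of the test function, a very standard idea in the theory of inf/sup-convolutions. The only subtle point to double-check is that the half-cylinder condition in Definition \ref{oblique_cond} is satisfied, but this is immediate because the inequality $\phi \geq u$ holds on all of $\overline{Q}_\rho^+$, hence on any small half-cylinder around $(X_0^*, t_0^*) \in Q_\rho^* \subset Q_1^*$.
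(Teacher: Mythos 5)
Your proposal is correct and coincides with the paper's own proof: both argue by contradiction, use the sup-convolution inequality to extract the paraboloid $\phi(Z,s)=u(X_0^*,t_0^*)-\tfrac{1}{\epsilon}|X_0-X_0^*|^2-\tfrac{1}{\epsilon}(t_0-t_0^*)^2+\tfrac{1}{\epsilon}|X_0-Z|^2+\tfrac{1}{\epsilon}(t_0-s)^2$ as a test function touching $u$ from above at $(X_0^*,t_0^*)$, and then compute $\phi_y(X_0^*,t_0^*)=-\tfrac{2}{\epsilon}y_0<0$, contradicting the viscosity Neumann condition. No gaps.
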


\begin{proof}
Take any $(X_0,t_0) \in Q_1^+$. We assume that $(X_0^*,t_0^*) \in Q_\rho^*$ to get a contradiction. Recall that $u^{\epsilon, \rho} (X_0,t_0) = u(X_0^*,t_0^*) - \frac{1}{\epsilon} |X_0-X_0^*|^2 - \frac{1}{\epsilon}(t_0-t_0^*)^2$
and that for any $(Z,s) \in \overline{Q}_\rho^+$,  $u^{\epsilon, \rho} (X_0,t_0) \geq u(Z,s) - \frac{1}{\epsilon} |X_0-Z|^2 - \frac{1}{\epsilon}(t_0-s)^2.$
That is for any $(Z,s) \in \overline{Q}_\rho^+$,
$$ u(X_0^*,t_0^*) - \frac{1}{\epsilon} |X_0-X_0^*|^2 - \frac{1}{\epsilon}(t_0-t_0^*)^2  \geq u(Z,s) - \frac{1}{\epsilon} |X_0-Z|^2 - \frac{1}{\epsilon}(t_0-s)^2.$$
Setting $\phi(Z,s) := u(X_0^*,t_0^*) - \frac{1}{\epsilon} |X_0-X_0^*|^2 - \frac{1}{\epsilon}(t_0-t_0^*)^2 +\frac{1}{\epsilon} |X_0-Z|^2 + \frac{1}{\epsilon}(t_0-s)^2$ we ensure that $ \phi \geq u$ in $\overline{Q}_\rho^+$ and $\phi(X_0^*,t_0^*)=u(X_0^*,t_0^*)$ which implies that $\phi_y(X_0^*,t_0^*) \geq 0$, But, on the other hand we can compute $\phi_y(X_0^*,t_0^*) = - \frac{2}{\epsilon}(y_0-y_0^*)=- \frac{2}{\epsilon}y_0 <0$. 
\end{proof}

\begin{lemma} \label{probforappr}
Let $u \in C \left( Q_1^+ \cup Q_1^* \right)$ satisfies in the viscosity sense
\begin{align} \label{probappr}
\begin{cases} 
F(D^2u)-u_t \geq 0, &\ \ \ \ \text{ in } \ \ Q_1^+ \\
u_y \geq 0,&\ \ \ \ \text{ on } \ \ Q_1^*.
\end{cases}
\end{align}
Then for any $0<\rho_1<\rho<\frac{1}{2}$ there exists some $0<\epsilon_0=\epsilon_0(\rho_1, \rho,u)$ such that for any $0<\epsilon<\epsilon_0$, $u^{\epsilon, \rho}$ is a viscosity subsolution of $F(D^2v)-v_t = 0$ in $Q_{\rho_1}^+$ (hence $u^{\epsilon, \rho}$ satisfies (\ref{probappr}) in $Q_{\rho_1}^+ \cup Q_{\rho_1}^*$).
\end{lemma}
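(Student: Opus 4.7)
The plan is to carry out the standard sup-convolution argument adapted to the half-cylinder geometry. Take a smooth test function $\phi$ touching $u^{\epsilon,\rho}$ from above at an interior point $(X_0,t_0)\in Q_{\rho_1}^+$, in some backward cylinder $Q_\sigma(X_0,t_0)$. By Lemma \ref{propertiesofappr}(i) there is a maximizing pair $(X_0^*,t_0^*)\in\overline{Q}_\rho^+$ with
$$u^{\epsilon,\rho}(X_0,t_0)=u(X_0^*,t_0^*)-\tfrac{1}{\epsilon}|X_0-X_0^*|^2-\tfrac{1}{\epsilon}(t_0-t_0^*)^2,$$
and I would use this pair to transplant $\phi$ back to a test function for $u$. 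Concretely, I would set
$$\tilde\phi(Z,s):=\phi\bigl(Z+X_0-X_0^*,\,s+t_0-t_0^*\bigr)+\tfrac{1}{\epsilon}|X_0-X_0^*|^2+\tfrac{1}{\epsilon}(t_0-t_0^*)^2$$
and verify by the sup-convolution inequality (applied at a generic $(Z,s)$ and at $(X_0^*,t_0^*)$ itself) that $\tilde\phi$ touches $u$ from above at $(X_0^*,t_0^*)$ in the translated backward cylinder $Q_\sigma(X_0^*,t_0^*)$.

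The crux of the argument is then to guarantee that $(X_0^*,t_0^*)$ sits in the parabolic interior of $Q_\rho^+$, so that we may legitimately test $u$ as a viscosity subsolution of $F(D^2u)-u_t\geq 0$ there. This splits into two independent checks. First, $(X_0^*,t_0^*)\notin Q_\rho^*$: this is exactly the content of Lemma \ref{starpoint}, which uses the hypothesis $u_y\geq 0$ on $Q_1^*$ to rule out the flat face. Second, $(X_0^*,t_0^*)$ must stay away from $\partial B_\rho$ and from $\{t=-\rho^2\}$; this is where smallness of $\epsilon$ enters. By Lemma \ref{propertiesofappr}(i) one has
$$|X_0-X_0^*|^2+(t_0-t_0^*)^2\le \epsilon\,\osc_{Q_\rho^+}u,$$
so choosing
$$\epsilon_0 := \frac{\bigl(\min\{\rho-\rho_1,\,\rho^2-\rho_1^2\}\bigr)^2}{\osc_{Q_\rho^+}u+1}$$
(or vanishing identically if $u$ is constant, in which case the claim is trivial) forces $(X_0^*,t_0^*)$ into $\mathrm{int}_p Q_\rho^+$ for every $(X_0,t_0)\in Q_{\rho_1}^+$, and hence also keeps a small cylinder $Q_\sigma(X_0^*,t_0^*)$ contained in $Q_\rho^+\subset Q_1^+$.

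With the transplanted test function available inside $Q_1^+$, I would invoke the viscosity subsolution property of $u$ at $(X_0^*,t_0^*)$ to conclude
$$F\bigl(D^2\tilde\phi(X_0^*,t_0^*)\bigr)-\tilde\phi_t(X_0^*,t_0^*)\ge 0.$$
Because $\tilde\phi$ is just $\phi$ translated in space-time plus a constant, its Hessian and time derivative at $(X_0^*,t_0^*)$ coincide with those of $\phi$ at $(X_0,t_0)$, which yields $F(D^2\phi(X_0,t_0))-\phi_t(X_0,t_0)\ge 0$, as required. Finally, the boundary condition $(u^{\epsilon,\rho})_y\geq 0$ on $Q_{\rho_1}^*$ is just the restriction of Lemma \ref{propertiesofappr}(iv) and requires no smallness of $\epsilon$.

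The main technical obstacle is really the bookkeeping around the starpoint: making $(X_0^*,t_0^*)$ simultaneously avoid $Q_\rho^*$ (by a qualitative use of the Neumann-type boundary condition via Lemma \ref{starpoint}) and the remaining parabolic boundary (by a quantitative, $\epsilon$-dependent argument using property (i)). Everything else is the routine observation that translation of the test function preserves second-order jets, so the PDE inequality transfers from $(X_0^*,t_0^*)$ to $(X_0,t_0)$ unchanged.
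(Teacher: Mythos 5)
Your proposal is correct and follows essentially the same route as the paper's proof: translate the test object by $(X_0-X_0^*,t_0-t_0^*)$ and add the constant $\frac{1}{\epsilon}|X_0-X_0^*|^2+\frac{1}{\epsilon}(t_0-t_0^*)^2$, rule out $(X_0^*,t_0^*)\in Q_\rho^*$ via Lemma \ref{starpoint} (using the viscosity Neumann condition), and use the quantitative bound $|X_0-X_0^*|^2+(t_0-t_0^*)^2\le\epsilon\,\osc_{Q_\rho^+}u$ to keep $(X_0^*,t_0^*)$ away from the rest of the parabolic boundary for $\epsilon$ small, then apply the subsolution property of $u$ and observe that the translation preserves second-order jets. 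The only cosmetic differences are that the paper tests with paraboloids rather than general smooth $\phi$ and uses the specific threshold $\epsilon_0=d^4/(16\,\osc_{\overline{Q}_\rho^+}u)$ with $d=\rho-\rho_1$; your choice of $\epsilon_0$ is equally valid.
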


Note that we do not use the Neumann condition of (\ref{probappr}) to show that $u^{\epsilon, \rho}$ satisfies the same condition since $u^{\epsilon, \rho}$ satisfies this condition anyway. However the Neumann condition is needed in order to get that $u^{\epsilon, \rho}$ is a subsolution of the equation (regarding Lemma \ref{starpoint}).

\begin{proof}
Take any point $(X_0,t_0) \in Q_{\rho_1}^+$ and any second order paraboloid $R_2(X,t)= A+ B \cdot (X-X_0) + C (t-t_0) + \frac{1}{2} (X-X_0)^\tau D (X-X_0)$ touching $u^{\epsilon, \rho}$ by above at $(X_0,t_0)$. We want to show that $F(D) -C \geq 0$.

Consider the translation 
$$\tilde{R}_2(X,t)=R_2(X+X_0-X_0^*, t+t_0-t_0^*)+\frac{1}{\epsilon} |X_0-X_0^*|^2 + \frac{1}{\epsilon}(t_0-t_0^*)^2.$$
Our aim is to show that for small $\epsilon$ this paraboloid touches $u$ at $(X_0^*,t_0^*)$ in order to apply the equation for $u$ (recall that $(X_0^*,t_0^*) \in \overline{Q}_\rho^+ \setminus Q_\rho^*$). Note that $\tilde{R}_2(X_0^*,t_0^*)= R_2(X_0, t_0)+\frac{1}{\epsilon} |X_0-X_0^*|^2 + \frac{1}{\epsilon}(t_0-t_0^*)^2 =u(X_0^*,t_0^*)$. Hence it remains to show that $\tilde{R}_2$ stays above $u$ around $(X_0^*,t_0^*)$.

Let $d=\rho - \rho_1>0$ and take $\epsilon_0 = \frac{d^4}{16 \osc_{\overline{Q}_\rho^+}u}>0$. Then, for $0<\epsilon\leq \epsilon_0$ we have that $|X_0-X_0^*|^2 + (t_0-t_0^*)^2 \leq \left( \frac{d}{2} \right)^4$ which ensures that $(X_0^*,t_0^*)$ is an interior point of $Q_\rho^+$. Therefore, we may choose some small enough $\delta>0$ so that $Q_\delta (X_0^*,t_0^*) \subset Q_\rho^+$ and $Q_\delta (X_0,t_0) \subset Q_\rho^+$. Note that if $(X,t) \in Q_\delta (X_0^*,t_0^*)$, then $(X+X_0-X_0^*, t+t_0-t_0^*) \in Q_\delta (X_0,t_0)$. Hence,  
$$u^{\epsilon, \rho} (X+X_0-X_0^*, t+t_0-t_0^*) \geq u(Z,s)-\frac{1}{\epsilon} |Z-X-X_0+X_0^*|^2 - \frac{1}{\epsilon}(s-t-t_0+t_0^*)^2$$
for any $(Z,s) \in \overline{Q}_\rho^+$. Taking $(Z,s)=(X,t)$,
$$R_2(X+X_0-X_0^*, t+t_0-t_0^*) \geq u^{\epsilon, \rho} (X+X_0-X_0^*, t+t_0-t_0^*) \geq u(X,t)-\frac{1}{\epsilon} |X_0-X_0^*|^2 - \frac{1}{\epsilon}(t_0-t_0^*)^2.$$
That is $u(X,t) \leq \tilde{R}_2(X,t)$, for $(X,t) \in Q_\delta (X_0^*,t_0^*)$ as desired.
\end{proof}

\begin{prop} \label{diff_Neumann}
Assume that $u, v \in C \left( Q_1^+ \cup Q_1^* \right)$ satisfy in the viscosity sense
\begin{align} \label{uniq1}
\begin{cases}
F(D^2u) -u_t \geq 0, &\ \ \ \ \ \text{ in } \ Q_1^+ \\
u_y \geq 0, &\ \ \ \ \ \text{ on } \ Q_1^* 
\end{cases} \ \ \text{ and } \ \
\begin{cases} 
F(D^2v) -v_t \leq 0, &\ \ \ \ \ \text{ in } \ Q_1^+ \\
v_y \leq 0, &\ \ \ \ \ \text{ on } \ Q_1^* 
\end{cases}
\end{align}
Then
\begin{align} \label{uniq2}
\begin{cases}
u-v \in \underline{S}_p \left( \frac{\lambda}{n}, \Lambda \right), &\ \ \ \ \ \text{ in } \ Q_1^+ \\
(u-v)_y \geq 0, &\ \ \ \ \ \text{ on } \ Q_1^* \ \  \text{ (in the viscosity sense). }
\end{cases}
\end{align} 
\end{prop}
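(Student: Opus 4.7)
The natural strategy is to regularize $u$ and $v$ via the sup- and inf-convolutions of subsection 4.1, exploit the extra pointwise regularity of the approximations, and pass to the limit. For $\epsilon>0$ and $\rho\in(0,1/2)$, let $u^{\epsilon,\rho}$ denote the sup-convolution of $u$ and $v_{\epsilon,\rho}$ the inf-convolution of $v$. By Lemma \ref{propertiesofappr}(iv) (and its obvious symmetric version for inf-convolutions), the Neumann-type inequalities $(u^{\epsilon,\rho})_y\geq 0$ and $(v_{\epsilon,\rho})_y\leq 0$ already hold on $Q_1^*$ in the viscosity sense automatically. By Lemma \ref{probforappr} and its analog for supersolutions, for any $\rho_1<\rho$ and every sufficiently small $\epsilon$, $u^{\epsilon,\rho}$ is a viscosity subsolution and $v_{\epsilon,\rho}$ a viscosity supersolution of $F(D^2w)-w_t=0$ in $Q_{\rho_1}^+$.

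The crucial gain from the regularization is that $u^{\epsilon,\rho}$ is semi-convex and $v_{\epsilon,\rho}$ semi-concave in $(X,t)$ (being suprema and infima of paraboloids with uniform opening $2/\epsilon$), so by the parabolic Alexandrov theorem both admit a second-order Taylor expansion almost everywhere in $Q_{\rho_1}^+$. At such points of twice-differentiability the viscosity inequalities become classical pointwise ones, and subtracting gives $F(D^2u^{\epsilon,\rho})-F(D^2v_{\epsilon,\rho})-(u^{\epsilon,\rho}-v_{\epsilon,\rho})_t\geq 0$ a.e. Invoking the uniform ellipticity (\ref{ell_cond}) in the form $F(M)-F(N)\leq \mathcal{M}^+_{\lambda/n,\Lambda}(M-N)$ then yields
\begin{equation*}
\mathcal{M}^+_{\lambda/n,\Lambda}(D^2 w^\epsilon)-w^\epsilon_t\geq 0\qquad\text{a.e. in }Q_{\rho_1}^+,
\end{equation*}
where $w^\epsilon:=u^{\epsilon,\rho}-v_{\epsilon,\rho}$. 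The factor $1/n$ enters precisely because (\ref{ell_cond}) is stated in the operator norm $\|\cdot\|_\infty$ rather than the trace: converting to the Pucci extremal operators, which are defined via traces, costs a factor of $n$ on the lower ellipticity constant when bounding the negative part $\|(M-N)^-\|_\infty\geq \tfrac{1}{n}\mathrm{tr}\,(M-N)^-$.

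To promote this a.e. inequality to the viscosity subsolution property for $w^\epsilon$, one uses that $w^\epsilon$ is itself semi-convex: if a paraboloid $P$ touches $w^\epsilon$ from above at $(X_0,t_0)$, a Jensen-type perturbation argument provides nearby points of twice-differentiability whose second-order data satisfy $D^2w^\epsilon\leq D^2P+o(1)$ and $w^\epsilon_t\geq P_t-o(1)$, so the a.e. Pucci inequality passes to $(X_0,t_0)$ by monotonicity of $\mathcal{M}^+_{\lambda/n,\Lambda}$. The condition $w^\epsilon_y\geq 0$ on $Q_{\rho_1}^*$ is obtained by an analogous argument combined with the pointwise viscosity Neumann inequalities for $u^{\epsilon,\rho}$ and $v_{\epsilon,\rho}$. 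Finally, Lemma \ref{propertiesofappr}(iii) gives $u^{\epsilon,\rho}\to u$ and $v_{\epsilon,\rho}\to v$ uniformly on $\overline{Q}_\rho^+$ as $\epsilon\to 0^+$, and by the closedness of $\underline{S}_p(\lambda/n,\Lambda)$ together with the Neumann condition under uniform convergence (the stability result cited in the appendix), the inequalities (\ref{uniq2}) pass to $w=u-v$ in $Q_{\rho_1}^+\cup Q_{\rho_1}^*$; exhausting $Q_1^+$ by such subcylinders completes the proof. The main technical obstacle is the lifting step from the a.e. Pucci inequality to the viscosity subsolution statement, and in particular the analogous boundary argument, since a smooth test function touching $w^\epsilon$ does not readily decompose into test functions for $u^{\epsilon,\rho}$ and $v_{\epsilon,\rho}$ separately.
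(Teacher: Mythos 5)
Your interior argument is essentially the content that the paper outsources to Theorem 4.6 of \cite{Wang2}; you have spelled out the Jensen-style mechanism (semi-convexity/semi-concavity, a.e.\ twice-differentiability, Pucci inequality, promotion to viscosity) in more detail than the paper, which simply cites Wang. That part is sound, and the factor $1/n$ explanation is the right bookkeeping.

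The gap is precisely where you flag it: the Neumann inequality for $w^\epsilon := u^{\epsilon,\rho}-v_{\epsilon,\rho}$. You write that it follows from ``an analogous argument combined with the pointwise viscosity Neumann inequalities for $u^{\epsilon,\rho}$ and $v_{\epsilon,\rho}$,'' and then immediately admit that a test function touching $w^\epsilon$ does not decompose into test functions for $u^{\epsilon,\rho}$ and $v_{\epsilon,\rho}$. That admission is fatal as stated: you have no argument for the boundary inequality. The resolution in the paper is not a Jensen-style perturbation or a decomposition of test functions, but the same direct structural computation as in Lemma~\ref{propertiesofappr}(iv). Fix $(X_0,t_0)\in Q_1^*$ and let $(X_0^*,t_0^*)$, $(X_0^{**},t_0^{**})$ be the maximizing/minimizing points for $u^{\epsilon,\rho}(X_0,t_0)$ and $v_{\epsilon,\rho}(X_0,t_0)$ respectively. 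Then for all $(X,t)$ near $(X_0,t_0)$,
\begin{equation*}
w^\epsilon(X,t)\;\geq\; u(X_0^*,t_0^*)-v(X_0^{**},t_0^{**})-\tfrac{1}{\epsilon}\bigl(|X-X_0^*|^2+|X-X_0^{**}|^2\bigr)-\tfrac{1}{\epsilon}\bigl((t-t_0^*)^2+(t-t_0^{**})^2\bigr),
\end{equation*}
with equality at $(X_0,t_0)$. Any smooth $\phi$ touching $w^\epsilon$ from above at $(X_0,t_0)$ therefore dominates this explicit paraboloid with equality there, and differentiating in $y$ from inside (noting $y_0=0$ and $y_0^*, y_0^{**}\geq 0$) gives $\phi_y(X_0,t_0)\geq \tfrac{2}{\epsilon}(y_0^*+y_0^{**})\geq 0$. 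No pointwise second-order information, no decomposition, and no Jensen lemma at the boundary is needed. Once this is in place, passing to the limit via Lemma~\ref{propertiesofappr}(iii) and the closedness result Proposition~\ref{neumannclosedness}, as you propose, completes the proof.
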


\begin{proof}
In Theorem 4.6 of \cite{Wang2}, L.Wang uses a similar approximate consideration to obtain that $u-v \in \underline{S}_p \left( \frac{\lambda}{n}, \Lambda \right)$ in $Q_1^+ $. Hence it remains to examine the Neumann condition. 

We define the corresponding approximate sub/super-solutions $u^{\epsilon, \rho}, v_{\epsilon, \rho}$, for which we have that $\left( u^{\epsilon, \rho}-  v_{\epsilon, \rho} \right)_y \geq 0$ on $Q_1^*$ in the viscosity sense. This can be proved using the same idea as in the proof of \textbf{\textit{(iv)}}, Lemma \ref{propertiesofappr}. We are aiming to pass to the limit using Proposition \ref{neumannclosedness} (see appendix). To do so we take any $(X_0,t_0) \in Q_1^*$ and consider $0<\rho_0<\rho<1$ be so that $(X_0,t_0) \in Q_{\rho_0}^* \subset \overline{Q}_{\rho_0}^+ \subset \overline{Q}_{\rho}^+$. Lemma \ref{probforappr} gives that for sufficiently small $\epsilon>0$, $u^{\epsilon, \rho}, v_{\epsilon, \rho}$ are sub/super-solutions of $F(D^2w) -w_t = 0$ in $Q_{\rho_0}^+$. So again from Theorem 4.6 of \cite{Wang2}, $u^{\epsilon, \rho} - v_{\epsilon, \rho} \in \underline{S}_p \left( \frac{\lambda}{n}, \Lambda \right)$ in $Q_{\rho_0}^+$.

We now apply Proposition \ref{neumannclosedness} to $u^{\epsilon, \rho}-  v_{\epsilon, \rho}$ and combining with \textbf{\textit{(iii)}} of Lemma \ref{propertiesofappr} we obtain that $(u-v)_y \geq 0$ on $Q_{\rho_0}^*$ in the viscosity sense.
\end{proof}

Note that the above together with Theorem \ref{ABP-Oblique} gives a uniqueness result for the nonlinear Neumann problem.

\subsection{$H^{1+\alpha}$-estimates for the homogeneous Neumann case} \label{H^1+a-neumann-section}

First note that interior estimates for the first derivatives are proved in Section 4.2. of \cite{Wang2}. Actually, as explained in \cite{Wang2}, we have more than typical spatial $H^{1+\alpha}$-estimates and the extra property is related to the $t$-direction.

To examine the Neumann problem we need to know the analog result for the Dirichlet case (see appendix for the proof).

\begin{thm} \label{Dirichlet_H1+a} (Boundary $H^{1+\alpha}$-estimates for the Dirichlet problem).
Let $g$ be an $H^{1+\alpha}$-function locally on  $Q_{1}^*$ and $u \in C \left( Q_1^+ \cup Q_1^*  \right)$ be bounded and satisfies in the viscosity sense
\begin{align} \label{Dir_prob}
\begin{cases}
F(D^2u) -u_t = 0, &\ \ \ \ \ \text{ in } \ Q_1^+ \\
u = g, &\ \ \ \ \ \text{ on } \ Q_1^*. \\
\end{cases}
\end{align}
Then the first derivatives $u_{x_1},\dots,u_{x_{n-1}},u_y$ exist in $\overline{Q}_{1/2}^+$. Moreover there exists universal constant $0<\alpha_0 < 1$ and a polynomial $R_{1;P_0}(X)= A_{P_0}+ B_{P_0} \cdot (X-X_0)$, where $A_{P_0}=u(P_0)=g(P_0)$ and $B_{P_0}= \left( u_{x_1}(P_0), \dots, u_{x_{n-1}}(P_0), u_y(P_0) \right) = \left( g_{x_1}(P_0), \dots, g_{x_{n-1}}(P_0), u_y(P_0) \right)$ so that for $\beta=\min\{\alpha, \alpha_0\}$
\begin{equation}\label{Dirichlet_H1+a_est}
|u(X,t) - R_{1;P_0}(X)| \leq C \left( ||u||_{L^\infty \left( Q_{1}^+ \right)}+||g||_{H^{1+\alpha}\left(  \overline{Q}_{1/2}^* \right)}  +|F(O)| \right) \ p(P,P_0)^{1+\beta}
\end{equation}
for every $P=(X,t) \in \overline{Q}_{1/2}^+(P_0)$, where $C>0$ is a universal constant.
\end{thm}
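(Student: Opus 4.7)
The plan is a Caffarelli-style iteration-and-rescaling argument at the boundary point. After translating and rescaling we may assume $P_0=(0,0)$ and normalize so that $||u||_{L^\infty(Q_1^+)} \leq 1$, $||g||_{H^{1+\alpha}(\overline{Q}_{1/2}^*)} \leq 1$, and $|F(O)| \leq 1$. Subtracting the spatial linear polynomial $R_0(X) := g(0) + \sum_{i=1}^{n-1} g_{x_i}(0)\, x_i$ from $u$ (which only modifies $F(O)$ by a controlled constant and preserves the equation class) reduces us to the case $g(0)=0$ and $D_{x'}g(0)=0$, so that $|g(x,0,t)| \leq p((x,0,t),0)^{1+\alpha}$ on $Q_1^*$ and only the normal coefficient $u_y(0)$ of $B_{P_0}$ remains to be identified.

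The goal is to construct inductively constants $a_k \in \R$ with $|a_{k+1}-a_k| \leq C\rho^{k\beta}$ such that $|u(X,t) - a_k y| \leq \rho^{k(1+\beta)}$ in $\overline{Q}_{\rho^k}^+$, for universal $\rho \in (0,1)$, $C>0$, and $\beta := \min\{\alpha, \alpha_0\}$. Passing to the limit, $u_y(0) := \lim_k a_k$ exists, and the punctual estimate (\ref{Dirichlet_H1+a_est}) follows by interpolating across consecutive dyadic scales $\rho^{k+1} \leq r \leq \rho^k$. The inductive step rescales via $v_k(X,t) := \rho^{-k(1+\beta)}\bigl(u(\rho^k X, \rho^{2k}t) - a_k \rho^k y\bigr)$; a direct computation shows $v_k$ again satisfies a uniformly elliptic Dirichlet problem, with normalized right-hand side and boundary data of $L^\infty$-size $\rho^{k(\alpha-\beta)} \leq 1$, so the approximation lemma below applied at unit scale produces the correction $a_{k+1}$.

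The approximation lemma asserts, by a standard compactness argument, that for every $\varepsilon>0$ there is $\delta>0$ such that any viscosity solution $w$ of $F(D^2 w)-w_t = 0$ in $Q_1^+$ with $w=\gamma$ on $Q_1^*$, $||w||_\infty \leq 1$, $||\gamma||_\infty \leq \delta$, satisfies $|w(X,t)-ay| \leq \varepsilon$ in $Q_{1/2}^+$ for some $|a|\leq C_0$. If this failed, a sequence of counterexamples $(w_k, F_k, \gamma_k)$ would, by the Dirichlet analog of the boundary $H^\alpha$-estimate of Theorem \ref{bdary_C^a_oblique}, be uniformly equicontinuous up to $Q_1^*$ and thus converge locally uniformly to a limit $w_\infty$ solving $F_\infty(D^2 w_\infty) - (w_\infty)_t = 0$ with $w_\infty=0$ on $Q_1^*$; the base case below yields $w_\infty(X,t) = ay + O(p^{1+\alpha_0})$ and contradicts the failure for large $k$.

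The main obstacle is the base case: boundary $H^{1+\alpha_0}$-regularity at a flat-boundary point for the homogeneous Dirichlet problem with zero boundary data. Unlike in the linear or Laplace case, odd reflection is unavailable for general $F$, so one argues by barriers. Comparison against the pair of supersolutions $\pm Cy$ gives $|w(X,t)| \leq Cy$ near $Q_1^*$; then, applying Wang's interior $H^{1+\alpha_0}$-estimates at points $(x,y,t)$ of distance $\sim y$ from the flat boundary, together with a Campanato-type iteration of oscillation decay of $w/y$ at dyadic scales (driven by barriers testing the pure normal direction), yields a limit slope $a$ and the expansion $w \approx ay + O(p^{1+\alpha_0})$ at the origin. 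With the base case in hand, the iteration closes and summing the geometric series $\sum |a_{k+1}-a_k|$ delivers (\ref{Dirichlet_H1+a_est}).
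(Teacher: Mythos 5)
Your plan is correct in spirit and closes, but it takes a genuinely different route at the inductive step than the paper does. The paper's proof (Appendix) first establishes the homogeneous-data case (Lemma~\ref{Dirichlet_H1+a_hom}), whose engine is Lemma~\ref{osc_u/y}: an oscillation decay of $u/y$ obtained from a Lipschitz barrier (Proposition~\ref{Lipschitz}) plus the interior Harnack inequality, pushed to the boundary by a second explicit barrier. Then, at each scale $r=\gamma^k$ of the iteration, the paper \emph{constructs} a comparison solution $v$ with $v=0$ on $Q_r^*$ and $v=u-B_ky$ on the lateral boundary, applies Lemma~\ref{Dirichlet_H1+a_hom} to $v$ to extract the new slope, and controls $w=u-B_ky-v$ (which vanishes on the lateral boundary and equals $g$ on $Q_r^*$) by the ABPT maximum principle; no compactness or normal family is invoked. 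Your version replaces this by the Caffarelli-style compactness/approximation lemma together with explicit rescaling $v_k(X,t)=\rho^{-k(1+\beta)}\bigl(u(\rho^kX,\rho^{2k}t)-a_k\rho^ky\bigr)$. That is a legitimate alternative and arguably more robust (it generalizes to $(X,t)$-dependent operators with small oscillation), at the cost of requiring a stability/closedness step for the limiting viscosity problem.

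Two technical points to tighten. First, the approximation lemma should be phrased as: the solution $w$ with small boundary data is $\varepsilon$-close in $\overline Q_{1/2}^+$ to a solution $\tilde w$ of the \emph{homogeneous} Dirichlet problem; then use the base case on $\tilde w$ to write $\tilde w(X,t)=ay+O(p^{1+\alpha_0})$ and only afterwards pass to a small radius $\eta$ where $C\eta^{1+\alpha_0}<\tfrac12\eta^{1+\beta}$. As you state it (``$|w-ay|\le\varepsilon$ in $Q_{1/2}^+$''), the error does not shrink with the radius and the iteration won't close. Second, with the normalization $\|g\|\le1$ the rescaled boundary data of $v_k$ has size $\rho^{k(\alpha-\beta)}\le1$, which is not yet $\le\delta$; you should renormalize at the start so that $\|g\|\le\delta$ (equivalently, multiply $u$ by a small universal constant and absorb $1/\delta$ into the final constant), so that each step lands inside the regime of the approximation lemma. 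Your sketch of the base case (barrier giving $|w|\le Cy$, then a Campanato-type oscillation decay for $w/y$) is essentially the content of the paper's Proposition~\ref{Lipschitz} and Lemma~\ref{osc_u/y}; note the paper derives the oscillation decay from a boundary Harnack argument rather than from interior $H^{1+\alpha_0}$ estimates at distance $\sim y$, which is cleaner because it does not require interior $C^{1,\alpha}$ regularity, only the Krylov--Safonov-type interior Harnack.
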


In order to get (punctual) $H^{1+\alpha}$-regularity for the Neumann problem it is enough (due to Theorem \ref{Dirichlet_H1+a}) to show that the restriction of $u$ on $Q_1^*$ is locally $H^{1+\alpha}$. To do so, we need the following lemma.

\begin{lemma} \label{caffcab}
Let $0<\alpha<1$, $0<\beta \leq 1$, $0<A<B$ and $K >0$ be constants. Let $u
\in L^\infty([A,B])$ with $\norm{u}_{L^\infty([A,B])} \leq K$. Let $d=B-A$. Define, for $h \in \R$ with $0 < \abs{h} \leq \frac{d}{2}$,
\[ v_{\beta,h} (l) = \frac{u(l+h) - u(l)}{\abs{h}^\beta}, \:\:\:\:\: l \in I_h, \]
where $I_h = [A,B-h]$ if $h>0$ and $I_h = [A-h,B]$ if $h<0$.
Assume that $v_{\beta,h} \in C^\alpha(I_h)$ and
$\norm{v_{\beta,h}}_{C^\alpha(I_h)} \leq K$, for any
$0<\abs{h}\leq \frac{d}{2}$. Then we have
\begin{enumerate}
\item If $\alpha + \beta < 1$ then $u \in C^{\alpha+\beta}([A,B])$ and $\norm{u}_{C^{\alpha+\beta}([A,B])} \leq CK$.
\item If $\alpha + \beta > 1$ then $u \in C^{0,1}([A,B])$ and $\norm{u}_{C^{0,1}([A,B])} \leq CKd^{\alpha+\beta-1}$
\end{enumerate}
where the constant C depends only on $\alpha$ and $\beta$.
\end{lemma}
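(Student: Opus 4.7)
The idea is classical: first extract from the hypothesis a Zygmund-type second-difference bound on $u$, then convert it into Hölder (resp.\ Lipschitz) regularity by a dyadic iteration in the spirit of Campanato.

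\emph{Step 1 (second-difference estimate).} The algebraic identity
\[\Delta^2_{h,k} u(l) \ :=\ u(l+h+k) - u(l+h) - u(l+k) + u(l)\ =\ |h|^\beta \bigl( v_{\beta,h}(l+k) - v_{\beta,h}(l) \bigr),\]
together with the $C^\alpha$ control on $v_{\beta,h}$, immediately gives the mixed-scale estimate $|\Delta^2_{h,k} u(l)| \leq K|h|^\beta |k|^\alpha$ whenever the four evaluation points lie in $[A,B]$. Specialising to $k = h$ yields the Zygmund-type bound
\[|u(l+2h) - 2u(l+h) + u(l)| \ \leq\ K |h|^{\alpha+\beta}.\]
In parallel, the sup-norm part of the hypothesis furnishes the elementary first-order bound $|u(l+h) - u(l)| \leq K|h|^\beta$, which will serve as the ``initial condition'' for the iteration.

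\emph{Step 2 (dyadic iteration).} Fix $l_0 \in [A,B]$ and set $\phi(h) := u(l_0+h) - u(l_0)$. The Zygmund bound rearranges to
\[|\phi(h)| \ \leq\ \tfrac{1}{2}|\phi(2h)| \ +\ \tfrac{K}{2}|h|^{\alpha+\beta}.\]
Choose a base scale $r$ of order $d$ (adjusted so that $l_0 + r \in [A,B]$, possibly after reflecting the sign of $r$) and iterate at $h_j := 2^{-j} r$. Summation of the resulting geometric series gives, with $h := h_N = 2^{-N} r$,
\[|\phi(h)| \ \leq\ 2^{-N}|\phi(r)| \ +\ CK\,r^{\alpha+\beta}\,2^{-N}\sum_{j=0}^{N-1} 2^{j(1-\alpha-\beta)}.\]
When $\alpha+\beta < 1$ the geometric sum is dominated by its last term $2^{N(1-\alpha-\beta)}$, and combining with $|\phi(r)| \leq K r^{\beta}$ produces a bound of the form $|\phi(h)| \leq CK\bigl( h\cdot r^{\beta-1} + h^{\alpha+\beta} \bigr)$; using $r \sim d$ and $h \leq r$ this collapses to the Hölder bound $|\phi(h)| \leq CKh^{\alpha+\beta}$. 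When $\alpha+\beta > 1$ the geometric sum is $O(1)$, and one obtains $|\phi(h)| \leq CKh\bigl( r^{\beta-1} + r^{\alpha+\beta-1} \bigr)$; for $r \sim d$ this delivers the Lipschitz bound $|\phi(h)| \leq CKh\, d^{\alpha+\beta-1}$.

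\emph{Step 3 (conclusion).} Varying $l_0$ and exploiting symmetry (either by applying the argument with increments of either sign, or by splitting an increment $[x,y]$ about a conveniently chosen midpoint) upgrades these one-sided pointwise estimates into the claimed two-sided Hölder or Lipschitz bound on $[A,B]$.

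The principal technical nuisance, as I see it, is the bookkeeping in Step~2: the dyadic chain $\{l_0 + h_j\}$ must remain inside $[A,B]$, which constrains $r$ to be no larger than the distance of $l_0$ from the nearest endpoint. Ensuring that $r$ may simultaneously be chosen comparable to $d$ typically requires either a symmetrised two-sided version of the Zygmund inequality, or a splitting of each increment about a midpoint at distance $\gtrsim d$ from both endpoints. Once this has been arranged the remainder is routine geometric-series manipulation, and the matching of the precise factor $d^{\alpha+\beta-1}$ in the supercritical case drops out of the computation above.
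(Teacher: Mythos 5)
The paper does not offer its own proof of this lemma: it simply cites the elliptic version, Lemma~5.6 of \cite{CC} (stated on $[-1,1]$), and invokes the rescaling $\tilde u(k)=u\bigl(\tfrac{d}{2}k+\tfrac{A+B}{2}\bigr)$. Your proposal re-derives the result from scratch via the second-difference (Zygmund) bound and a dyadic iteration on $\phi(h)=u(l_0+h)-u(l_0)$. That is precisely the mechanism underlying the cited lemma, so the mathematical content is the same; the difference is that yours is self-contained while the paper's is citation plus rescaling. Your Step~1 identity is correct, the recursion $|\phi(h)|\le\tfrac12|\phi(2h)|+\tfrac{K}{2}|h|^{\alpha+\beta}$ is the right engine, and summing the geometric series does separate the two regimes exactly as you describe.

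One place to be careful: your claim that, with $r\sim d$ and $h\le r$, the boundary term $Khr^{\beta-1}$ ``collapses'' into $CKh^{\alpha+\beta}$ in the subcritical case is not literally true with a constant depending only on $\alpha,\beta$. Indeed $hr^{\beta-1}\le h^{\alpha+\beta}r^{-\alpha}$, which is $h^{\alpha+\beta}$ only up to a factor $d^{-\alpha}$ (and the trivial bound on increments longer than $d/2$ likewise contributes a $d^{-(\alpha+\beta)}$ factor to $[u]_{C^{\alpha+\beta}}$). The lemma as stated in the paper carries the same implicit $d$-dependence: rescaling \cite{CC}'s $[-1,1]$ statement inflates the $C^\alpha(I_h)$ hypothesis by $(d/2)^{\beta}$ and $(d/2)^{\alpha+\beta}$ and deflates the conclusion by $(d/2)^{-(\alpha+\beta)}$, producing exactly the factors you would find by tracking your Step~2 carefully. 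So your proof is correct to the same precision as the stated lemma; in the paper's only applications $d$ is a fixed universal constant, so nothing downstream is affected. A similar remark applies to the supercritical term $Khr^{\beta-1}$, where with $r\sim d$ the answer is $Khd^{\beta-1}$ and matching the claimed $Khd^{\alpha+\beta-1}$ again costs a $d^{-\alpha}$. Finally, Step~3 (symmetrising/two-sided bookkeeping so that one can always choose $r\sim d$ keeping the dyadic chain inside $[A,B]$) is left as a sketch, but it is routine; it would be worth writing out the midpoint-splitting argument if you turn this into a full proof.
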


The above lemma is proved in \cite{CC} (Lemma 5.6) in the interval $[-1,1]$. With a rescale argument (considering $\tilde{u}(k):=u\left( \frac{d}{2} k+\frac{A+B}{2} \right)$) we can obtain Lemma \ref{caffcab}.

\begin{preremark} \label{negative_h}
Observe that if $v_{\beta,h}$ is $C^\alpha$ only for negative values of $h$ then we will have the estimates of \textit{1.} and \textit{2.} in $\left[ A+\frac{d}{2},B\right]$ and not in the whole $[A,B]$. This is useful when we study the $t$-direction. It can be deduced easily from the proof of Lemma 5.6 in \cite{CC} and a rescaling argument.
\end{preremark}

\begin{thm} \label{Neumann_H1+a} (Boundary $H^{1+\alpha}$-estimates for the Neumann problem).
Let $u \in C \left( Q_1^+ \cup Q_1^*  \right)$ be bounded and satisfies in the viscosity sense
\begin{align} \label{Neu_prob}
\begin{cases}
F(D^2u) -u_t = 0, &\ \ \ \ \ \text{ in } \ Q_1^+ \\
u_y = 0, &\ \ \ \ \ \text{ on } \ Q_1^*. \\
\end{cases}
\end{align}
Then the first derivatives $u_{x_1},\dots,u_{x_{n-1}},u_y$ exist in $\overline{Q}_{1/2}^+$. Moreover there exists a universal constant $0<\alpha < 1$ and a polynomial $R_{1;P_0}(X)= A_{P_0}+ B_{P_0} \cdot (X-X_0)$, where $A_{P_0}=u(P_0)$ and $B_{P_0}= \left( u_{x_1}(P_0), \dots, u_{x_{n-1}}(P_0), 0 \right)$ so that
\begin{equation}\label{Neumann_H1+a_est}
|u(X,t) - R_{1;P_0}(X)| \leq C \left( ||u||_{L^\infty \left( Q_{1}^+ \right)}+|F(O)|\right) \ p(P,P_0)^{1+\alpha}
\end{equation}
for every $P=(X,t) \in \overline{Q}_{1/2}^+(P_0)$, where $C>0$ is a universal constant.

In addition, $u_t$ exists and it is $H^\alpha $ in $ \overline{Q}_{1/2}^+$ with the corresponding estimate being bounded by above by a term of the form $C \left( ||u||_{L^\infty \left( Q_{1}^+ \right)} +|F(O)|\right)$ .
\end{thm}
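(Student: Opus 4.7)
My plan is to reduce the result to the Dirichlet estimate in Theorem \ref{Dirichlet_H1+a} by first showing that the boundary trace $g := u|_{Q_1^*}$ is locally parabolic $H^{1+\alpha}$. Once this is in hand, Theorem \ref{Dirichlet_H1+a} supplies an affine tangent polynomial $R_{1;P_0}$ at each $P_0 \in \overline{Q}^*_{1/2}$ with the right estimate, and the viscosity Neumann condition, tested against that tangent hyperplane, will force the $y$-component of $B_{P_0}$ to vanish, giving the stated form. Note that $u \in S_p(\lambda,\Lambda,-F(O))$, so applying Theorem \ref{bdary_C^a_oblique} directly to $u$ yields the starting $H^{\alpha_0}$ regularity with the required dependence on $\|u\|_{L^\infty}$ and $|F(O)|$.

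To upgrade $g$, I run a difference-quotient iteration in each tangential direction $e_i$, $i=1,\dots,n-1$. Setting $w_h(X,t) := u(X+he_i,t) - u(X,t)$ on a slightly shrunken half-cylinder, the translation-invariance of $F$ and the tangentiality of $e_i$ ensure that $u(\cdot+he_i,\cdot)$ solves the same homogeneous Neumann problem; applying Proposition \ref{diff_Neumann} with $u$ as sub/supersolution and its translate in the opposite role (and then reversing the roles) gives $w_h \in S_p(\lambda/n,\Lambda,0)$ with $(w_h)_y = 0$ on the flat part. Theorem \ref{bdary_C^a_oblique} then provides $\|w_h\|_{H^{\alpha_0}} \le C\|w_h\|_{L^\infty}$ on a still smaller domain. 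If $u \in H^\gamma$ with $\gamma \in [\alpha_0,1)$, then $\|w_h\|_{L^\infty} \le C|h|^\gamma$, so $w_h/|h|^\gamma$ is uniformly $H^{\alpha_0}$-bounded; Lemma \ref{caffcab} promotes $u$ to $H^{\gamma+\alpha_0}$ in the $x_i$-direction, or to Lipschitz once $\gamma+\alpha_0 > 1$. Finitely many iterations give Lipschitz regularity in $x_i$, after which the family $\{w_h/h\}$ is uniformly $H^{\alpha_0}$-bounded and Arzelà--Ascoli produces $D_{x_i}u$ as a continuous $H^{\alpha_0}$ function.

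The time direction is analogous, using $w_h(X,t) := u(X,t+h) - u(X,t)$ for $h<0$ (so that the translate lies in the domain) and invoking Remark \ref{negative_h} in place of Lemma \ref{caffcab}. The parabolic scaling converts $\|w_h\|_{L^\infty} \le C|h|^{\gamma/2}$ into uniform $H^{\alpha_0}$-control of $w_h/|h|^{\gamma/2}$, and each iteration increases the parabolic time-exponent by $\alpha_0$. After finitely many passes the time-Hölder of $g$ exceeds $(1+\alpha)/2$ for some universal $\alpha \in (0,\alpha_0)$, producing the $\langle g\rangle_{\alpha+1}$ seminorm. Combined with the tangential step, $g$ is locally $H^{1+\alpha}$, so Theorem \ref{Dirichlet_H1+a} delivers $R_{1;P_0}$ at each $P_0 \in \overline{Q}^*_{1/2}$ with the estimate \eqref{Neumann_H1+a_est}. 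Because $u$ is now punctually differentiable at $P_0$, any smooth $\phi$ touching $u$ from above at $P_0$ must satisfy $D\phi(P_0) = B_{P_0}$; applying the viscosity Neumann condition from above and from below then yields $u_y(P_0) = (B_{P_0})_n = 0$, giving the stated form of $B_{P_0}$. For the final assertion on $u_t$, one additional round of the time iteration pushes $u$ past Lipschitz in $t$; the bound $\|w_h/h\|_{H^{\alpha_0}} \le C$ combined with Arzelà--Ascoli then exhibits $u_t$ as the continuous uniform limit of the time difference quotients, and $H^\alpha$ on $\overline{Q}^+_{1/2}$.

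The main obstacle I anticipate is bookkeeping: each iteration step shrinks the half-cylinder and invokes Proposition \ref{diff_Neumann} (which degrades $\lambda$ to $\lambda/n$), so one has to verify that, after a universally bounded number of iterations, all constants remain universal and the final estimate is controlled purely by $\|u\|_{L^\infty(Q_1^+)}$ and $|F(O)|$. The parabolic scaling in the time iteration, together with the one-sided nature of Remark \ref{negative_h}, also requires some care.
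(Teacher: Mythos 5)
Your overall strategy matches the paper's: run tangential and backward-in-time difference quotients, use Proposition \ref{diff_Neumann} to see that each quotient lies in $S_p(\lambda/n,\Lambda)$ with zero Neumann data, feed this into Theorem \ref{bdary_C^a_oblique}, and iterate via Lemma \ref{caffcab} and Remark \ref{negative_h} until the trace $g=u|_{Q_1^*}$ is locally $H^{1+\alpha}$, at which point the Dirichlet estimate Theorem \ref{Dirichlet_H1+a} is invoked. That is precisely what the paper does, so the bulk of your argument is the paper's.

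The one step you flesh out that the paper leaves terse is the claim $(B_{P_0})_n=0$, and there your reasoning has a genuine gap. You assert that, once $u$ is punctually $H^{1+\alpha}$ at $P_0$, any smooth $\phi$ touching $u$ from above on a half-cylinder must have $D\phi(P_0)=B_{P_0}$. This is false in the normal direction: because the touching only happens on the side $y\ge 0$, the tangential derivatives of $\phi$ are indeed pinned to those of $R_{1;P_0}$, but in the $y$-direction one only gets the one-sided inequality $\phi_y(P_0)\ge (B_{P_0})_n$ (resp.\ $\le$ for touching from below). Feeding this into the viscosity Neumann condition gives $\phi_y(P_0)\ge 0$, which is compatible with any value of $(B_{P_0})_n$; there is no contradiction. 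Moreover one cannot simply take $\phi=R_{1;P_0}$ perturbed by a quadratic, since the $p^{1+\alpha}$ error in the expansion ($1+\alpha<2$) prevents any such paraboloid from actually touching $u$. So the assertion that $u_y(P_0)=0$ requires a different justification — for instance via Lemma \ref{osc_u/y}/Corollary \ref{u_y} applied to $u$ minus an approximating test function together with the sub/super-convolutions $u^{\epsilon,\rho}$, $u_{\epsilon,\rho}$ of Lemma \ref{propertiesofappr}(iv), or a direct barrier/ABPT argument on small half-cylinders. The rest of your outline (the bootstrap, the shrinking-cylinder bookkeeping, the one-sided time quotients, the Arzelà–Ascoli extraction of $u_t$) tracks the paper's proof faithfully.
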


\begin{proof} For convenience we denote by $K:=||u||_{L^\infty \left( Q_{1}^+ \right)} +|F(O)|$.

Lets examine first the $x_i$-direction, for $i=1,\dots,n-1$. For $e_i=(0,\dots, x_i=1,\dots,0) \in \R^n$, $0<\beta \leq 1$, $0<|h|<\frac{1}{8}$ we define
$$v_{\beta,h,i}(X,t)=\frac{u(X+he_i,t)-u(X,t)}{|h|^\beta}, \ \ \text{ for } \ \ (X,t) \in Q_{7/8}^+$$
(note that if $(X,t) \in Q_{7/8}^+$ then $(X+he_i,t) \in Q_1^+$). We define the following $H^\alpha$-norm which deals only with $x_i$-direction
$$||u||_{H^\alpha_i(\Omega)} :=  ||u||_{L^\infty(\Omega)} + \sup_{(X,t),(Z,t) \in \Omega \atop x_j=z_j, x_i \neq z_i} \frac{|u(X,t)-u(Z,t)|}{|x_i-z_i|^\alpha}.$$

It is easy to verify that
\begin{align*} 
\begin{cases}
v_{\beta,h,i} \in S_p \left( \frac{\lambda}{n}, \Lambda \right),& \ \ \ \text{ in } \ \ Q_{7/8}^+ \\
(v_{\beta,h,i})_y=0,& \ \ \ \text{ on } \ \ Q_{7/8}^*.
\end{cases}
\end{align*}

Now, take $0<r<\rho\leq \frac{7}{8}$. By $H^\alpha$-estimates we have
\begin{equation} \label{Neumann_H1+a_est1}
||v_{\beta,h,i}||_{H^{\alpha_1}\left( \overline{Q}_r^+ \right)} \leq C \ C(r,\rho) \ ||v_{\beta,h,i}||_{L^{\infty}\left( \overline{Q}_{\frac{r+\rho}{2}}^+ \right)}.
\end{equation}
Next, observe that if $(X,t) \in \overline{Q}_{\frac{r+\rho}{2}}^+$, once we choose $0<|h|< \frac{\rho-r}{2}$ 
we get $(X+he_i,t) \in \overline{Q}_\rho^+$. Therefore
$\ |v_{\beta,h,i}(X,t)| \leq ||u||_{H^{\beta}_i\left( \overline{Q}_\rho^+ \right)}.$
Returning to (\ref{Neumann_H1+a_est1}) we have that
\begin{equation} \label{Neumann_H1+a_est2}
||v_{\beta,h,i}||_{H^{\alpha_1}\left( \overline{Q}_r^+ \right)} \leq C \ C(r,\rho) \ ||u||_{H^{\beta}_i\left( \overline{Q}_\rho^+ \right)}
\end{equation}
for any $0<r<\rho\leq \frac{7}{8}$ and $h$ as above. Moreover observe that $H^\alpha$-estimates ensure that there exists some universal $0<\alpha_2<1$ so that for any $0<\rho<1$,
\begin{equation} \label{Neumann_H1+a_est3}
||u||_{H^{\alpha_2}_i\left( \overline{Q}_\rho^+ \right)} \leq ||u||_{H^{\alpha_2}\left( \overline{Q}_\rho^+ \right)} \leq C \ C(\rho) K.
\end{equation}

Note that we can choose some suitable $0<\alpha < \min \{ \alpha_1, \alpha_2 \}$ in order to succeed finding a universal integer $m_0 \geq 1$ so that $m_0 \alpha <1 \ \ \text{ and } \ \ (m_0+1)\alpha>1.$ Next we apply, using Lemma \ref{caffcab}, an iterative procedure which can be started from $\beta=\alpha$ and intent to finish at $\beta=1$. We consider the following finite sequence of (universal) radii
$$r_k= \frac{7}{8}- \frac{k}{16m_0}, \ \ \text{ for } \ \ k=0,1,\dots,2m_0.$$
Note that $r_0=\frac{7}{8}, r_{2m_0}=\frac{3}{4}$ and $r_{k-1}-r_k= \frac{1}{16m_0}$.
\\$\ $
\\ \hspace{-5mm}\underline{Step 1.} (of the iteration): Applying (\ref{Neumann_H1+a_est2}) together with (\ref{Neumann_H1+a_est3}) with $\beta=\alpha, r=r_1, \rho=\frac{7}{8}$ we obtain that $\ ||v_{\alpha,h,i}||_{H^{\alpha}\left( \overline{Q}_{r_1}^+ \right)} \leq CK$, for any $0<|h|<\frac{1}{16m_0}.$
Then using the above and Lemma \ref{caffcab} we shall get that $\ ||u||_{H^{2\alpha}_i\left( \overline{Q}_{r_2}^+ \right)}  \leq C K$. That is, we want, for any two $(X,t), (X+Le_i,t) \in \overline{Q}_{r_2}^+$, to have that $|u(X+Le_i,t)-u(X,t)|\leq CK|L|^{2\alpha}$. We split into two cases: If $|L| \geq \frac{1}{16m_0}$, then $|u(X+Le_i,t)-u(X,t)|\leq 2K \leq 2K (16m_0)^{2\alpha}|L|^{2\alpha} \leq CK |L|^{2\alpha}$. If $|L| < \frac{1}{16m_0}$, consider the interval $I=\left[ -\frac{1}{16m_0}, \frac{1}{16m_0} \right]$ and we define
$$\tilde{u}^{(X,t),i}(l)=u(X+le_i,t), \ \ \text{ for } \ \ l \in I.$$
In addition let $\tilde{v}_{\alpha,h}^{(X,t),i}(l),\ l \in I_h$, for $0<|h|<\frac{1}{16m_0}$ be as in Lemma \ref{caffcab}. Observe that $\tilde{v}_{\alpha,h}^{(X,t),i}(l) = v_{\alpha,h,i}(X+le_i,t).$ Now, if $(X,t) \in \overline{Q}_{r_2}^+$ and $l \in I$ then $(X+le_i,t) \in \overline{Q}_{r_1}^+$. Hence $||\tilde{v}_{\alpha,h}^{(X,t),i}||_{C^{\alpha}\left( I_h \right)} \leq ||v_{\alpha,h,i}||_{H^{\alpha}\left( \overline{Q}_{r_1}^+ \right)} \leq C K.$ Therefore, Lemma \ref{caffcab} implies $||\tilde{u}^{(X,t),i}||_{C^{\alpha}\left( I \right)} \leq CK$ (note that the length of $I$ is a universal number). Then, since $0, L \in I$, we have the desired.
\\$\ $

\hspace{-5mm}\underline{Step $m_0$.} (of the iteration): Applying (\ref{Neumann_H1+a_est2}) with $\beta=m_0\alpha, r=r_{2m_0-1}, \rho=r_{2m_0-2}$ together with Step $m_0-1$ we obtain that $||v_{m_0\alpha,h,i}||_{H^{\alpha}\left( \overline{Q}_{r_{2m_0-1}}^+ \right)} \leq CK$, for any $0<|h|<\frac{1}{16m_0}.$ Then again as in Step 1 (using Lemma \ref{caffcab}) and recalling the choice of constants $\alpha$ and $m_0$ ($(m_0+1)\alpha>1$) we can derive that $||u||_{H^{1}_i\left( \overline{Q}_{3/4}^+ \right)}  \leq C K$.

This last estimate ensures the existence of $u_{x_i}$ on $Q_{\frac{3}{4}}^*$ for any $i=1,\dots,n-1$. Moreover, applying again  (\ref{Neumann_H1+a_est2}) with $\beta=1, r=\frac{5}{8}, \rho=\frac{3}{4}$ together with the above we conclude that $||v_{1,h,i}||_{H^{\alpha}\left( \overline{Q}_{5/8}^+ \right)} \leq CK, \ \ \text{ for any } \ \ 0<|h|<\frac{1}{16m_0}$, which gives a suitable $H^\alpha$-estimate for $u_{x_i}$ on $Q_{5/8}^*$.

Now, observing that $u$ satisfies, in the viscosity sense, a problem of the form (\ref{Dir_prob}) with $g(x,t)=u(x,0,t)$ and since $g$ is $H^{1+\alpha}$-function on $Q_{5/8}^*$ we can apply Theorem \ref{Dirichlet_H1+a} to get the desired result for $X$-directions.

It remains to examine the $t$-direction. The proof follows the same lines as above under minor modifications. We present the proof briefly for completeness.

So for $0<\beta \leq 2$, $-\frac{1}{8}<h<0$ we define
$$v_{\beta,h}(X,t)=\frac{u(X,t+h)-u(X,t)}{|h|^\frac{\beta}{2}}, \ \ \text{ for } \ \ (X,t) \in Q_{7/8}^+.$$
We define the following $H^\alpha$-norm which deals only with $t$-direction
$$||u||_{H^\alpha_t(\Omega)} :=  ||u||_{L^\infty(\Omega)} + \sup_{(X,t),(X,s) \in \Omega, t \neq s} \frac{|u(X,t)-u(X,s)|}{|t-s|^{\frac{\alpha}{2}}}.$$
Note that we can easily obtain that
\begin{align*} 
\begin{cases}
v_{\beta,h} \in S_p \left( \frac{\lambda}{n}, \Lambda \right),& \ \ \ \text{ in } \ \ Q_{7/8}^+ \\
(v_{\beta,h})_y=0,& \ \ \ \text{ on } \ \ Q_{7/8}^*.
\end{cases}
\end{align*}
Then 
\begin{equation}\label{Neumann_H1+a_est11}
||v_{\beta,h}||_{H^{\alpha}\left( \overline{Q}_r^+ \right)} \leq C \ C(r,\rho) \ ||u||_{H^{\beta}_t\left( \overline{Q}_\rho^+ \right)}
\end{equation}
for any $0<r<\rho\leq \frac{7}{8}, -\left( \frac{\rho-r}{2} \right)^2<h< 0$. Moreover for any $0<\rho<1$
\begin{equation} \label{Neumann_H1+a_est10}
||u||_{H^{\alpha}_t\left( \overline{Q}_\rho^+ \right)} \leq ||u||_{H^{\alpha}\left( \overline{Q}_\rho^+ \right)} \leq C \ C(\rho) K.
\end{equation}

We take $\alpha$ small enough so that there exists a universal integer $m_0$ which satisfies $\frac{m_0\alpha}{2}<1$ and $(m_0+1)\frac{\alpha}{2}>1$. For the iteration consider the following finite sequence of (universal) radii
$$r_k= \frac{7}{8}- \frac{k}{16m_0}, \ \ \text{ for } \ \ k=0,1,\dots,2m_0.$$
Note that $r_0=\frac{7}{8}, r_{2m_0}=\frac{3}{4}$ and $r_{k-1}-r_k= \frac{1}{16m_0}$.
\\$\ $
\\ \hspace{-5mm}\underline{Step 1.} (of the iteration): Applying (\ref{Neumann_H1+a_est11}) together with (\ref{Neumann_H1+a_est10}) we obtain that $||v_{\alpha,h}||_{H^{\alpha}\left( \overline{Q}_{r_1}^+ \right)} \leq CK$, for any $-\left(\frac{1}{16m_0}\right)^2<h<0$. Using the above and Remark \ref{negative_h} we shall get
$||u||_{H^{2\alpha}_t\left( \overline{Q}_{r_2}^+ \right)}  \leq C K$.
That is, we take any two $(X,t_1) \neq (X,t_2) \in \overline{Q}_{r_2}^+$ and since $t_1 \neq t_2$ we can assume without the loss of generality that $t_1>t_2$ and denote by $t:=t_1$ and $t+L:=t_2$ (then $L=t_2-t_1<0$) and we aim to get that $|u(X,t)-u(X,t+L)|\leq CK|L|^{\alpha}$. We split into two cases: If $|L| \geq \frac{1}{2} \left(\frac{1}{16m_0} \right)^2$, then $|u(X,t)-u(X,t+l)|\leq 2K \leq 2K 2^\alpha(16m_0)^{2\alpha}|L|^{\alpha} \leq CK |L|^{\alpha}$. If $|L| < \frac{1}{2} \left(\frac{1}{16m_0} \right)^2$, we consider the interval $I=\left[ -\left(\frac{1}{16m_0} \right)^2, 0\right]$. Define
$$\tilde{u}^{(X,t)}(l)=u(X,t+l), \ \ \text{ for } \ \ l \in I$$
and $\tilde{v}_{\frac{\alpha}{2},h}^{(X,t)}(l)= \frac{\tilde{u}^{(X,t)}(l+h)-\tilde{u}^{(X,t)}(l)}{|h|^{\frac{\alpha}{2}}}$, for $-\frac{1}{2} \left(\frac{1}{16m_0} \right)^2<h<0, l \in I_h$ where $I_h$ is as in Lemma \ref{caffcab}. Then $\tilde{v}_{\frac{\alpha}{2},h}^{(X,t)}(l)= v_{\alpha,h}(X,t+l)$. Now, if $(X,t) \in \overline{Q}_{r_2}^+$, $l \in I$ then $-\left(\frac{1}{16m_0} \right)^2-r_2^2<t+l \leq l <0$. But, $-\left(\frac{1}{16m_0} \right)^2-r_2^2=-r_1^2+2r_1r_2 -2r_2^2\geq -r_1^2$ (using that $r_1>r_2$), i.e. $(X,t+l) \in \overline{Q}_{r_1}^+$. Then, for $l_1, l_2 \in I_h$, $\abs{\tilde{v}_{\frac{\alpha}{2},h}^{(X,t)}(l_1)-\tilde{v}_{\frac{\alpha}{2},h}^{(X,t)}(l_2)} \leq CK |l_1-l_2|^{\frac{\alpha}{2}}$. Then Remark \ref{negative_h} implies $||\tilde{u}^{(X,t)}||_{C^{\alpha}\left( \tilde{I} \right)} \leq CK$, where $\tilde{I}= \left[ -\frac{1}{2} \left( \frac{1}{16m_0}\right)^2,0 \right]$. Since $0, L \in \tilde{I}$, we have the desired.
\\$\ $

\hspace{-5mm}\underline{Step $m_0$.} (of the iteration): Applying (\ref{Neumann_H1+a_est10}) together with Step $m_0-1$ we obtain that $||v_{m_0\alpha,h}||_{H^{\alpha}\left( \overline{Q}_{r_{2m_0-1}}^+ \right)} \leq CK$, for any $-\left(\frac{1}{16m_0} \right)^2<h<0.$ Then as in Step 1 (using Remark \ref{negative_h}) and recalling how the constants $\alpha$ and $m_0$ have been chosen ($(m_0+1)\frac{\alpha}{2}>1$) we can derive that $\ ||u||_{H^{2}_t\left( \overline{Q}_{3/4}^+ \right)}  \leq C K$.

This last estimate ensures the  existence of $u_t$ in $Q_{\frac{3}{4}}^+$. Moreover, by applying again (\ref{Neumann_H1+a_est10}) together with the above gives
$$||v_{1,h}||_{H^{\alpha}\left( \overline{Q}_{5/8}^+ \right)} \leq CK, \ \ \text{ for any } \ \ -\left(\frac{1}{16m_0} \right)^2<h<0.$$
\end{proof}

\subsection{$H^{1+\alpha}$-estimates for the oblique derivative case} \label{oblique_H^1+a_section}

First we examine a constant oblique derivative problem using the change of variables of section \ref{changeofvar}. In the following we assume for convenience that $F(O)=0$ but note that this assumption is not essential in the sense that we can find an operator with the same ellipticity constants satisfying this assumption and up to a subtraction of a paraboloid, $u$ will satisfy the new equation.

\begin{thm} \label{oblique_constant_H1+a} (Boundary $H^{1+\alpha}$-estimates for the constant oblique derivative problem).
Let $u \in C \left( Q_1^+ \cup Q_1^*  \right)$ be bounded and satisfies in the viscosity sense
\begin{align} \label{oblique_constant_prob}
\begin{cases}
F(D^2u) -u_t = 0, &\ \ \ \ \ \text{ in } \ Q_1^+ \\
\beta \cdot Du = 0, &\ \ \ \ \ \text{ on } \ Q_1^* \\
\end{cases}
\end{align}
where $\beta$ is a constant function. Then the first derivatives $u_{z_1},\dots,u_{z_{n-1}},u_w$ exist at $(0,0)$. Moreover there exists a universal constant $0<\alpha < 1$ and a polynomial $R_{1}(Z)= A^0+ B^0 \cdot Z$, where $A^0=u(0,0)$ and $B^0= Du(0,0) \in \R^n$ (then, $\beta \cdot B^0=0$) so that
\begin{equation}\label{oblique_constant_H1+a_est}
|u(Z,t) - R_{1}(Z)| \leq C \ ||u||_{L^\infty \left( Q_{1}^+ \right)} \ \left(|Z|+|t|^{1/2}\right)^{1+\alpha}
\end{equation}
for every $P=(Z,t) \in \overline{Q}_{\rho}^+$, where $C>0$, $0<\rho<1$ are universal constants.

In addition, $u_t$ exists and it is $H^\alpha $ in $ \overline{Q}_{\rho}^+$ with the corresponding estimate being bounded by above by a term of the form $C \ ||u||_{L^\infty \left( Q_{1}^+ \right)}$.
\end{thm}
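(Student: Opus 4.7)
The plan is to reduce the constant oblique problem to a homogeneous Neumann problem through the change of coordinates introduced in Section \ref{changeofvar}, and then invoke Theorem \ref{Neumann_H1+a}.

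First I would set $v(X,t) := u(AX,t)$, with $A$ the matrix from Section \ref{changeofvar}. As observed there, $v$ is a bounded viscosity solution of $\tilde{F}(D^2 v) - v_t = 0$ in $\tilde{Q}_1^+$ with $v_y = 0$ on $Q_1^*$, where $\tilde{F}$ has universal ellipticity constants and $\tilde{F}(O) = F(O) = 0$. Because $\|A\|_\infty, \|A^{-1}\|_\infty \leq C_{\delta_0}$, there is a universal $r_0 \in (0,1)$ with $Q_{r_0}^+ \subset \tilde{Q}_1^+$; the parabolic rescaling $w(X,t) := v(r_0 X, r_0^2 t)$ yields a viscosity solution on $Q_1^+$ of an analogous homogeneous Neumann problem for an operator with the same ellipticity constants.

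Next, I would apply Theorem \ref{Neumann_H1+a} to $w$ at the origin and undo the dilation. This produces a polynomial $\tilde{R}_1(X) = \tilde{A} + \tilde{B}\cdot X$ with $\tilde{A} = v(0,0) = u(0,0)$, $\tilde{B}_n = 0$, and a universal half-cylinder $\overline{Q}_{\rho'}^+$ on which
\begin{equation*}
|v(X,t) - \tilde{R}_1(X)| \leq C\,\|u\|_{L^\infty(Q_1^+)}\,(|X| + |t|^{1/2})^{1+\alpha},
\end{equation*}
together with the analogous $H^\alpha$-bound on $v_t$. To transfer this back, set $Z = AX$ and
\begin{equation*}
R_1(Z) := \tilde{R}_1(A^{-1}Z) = u(0,0) + (A^{-\tau}\tilde{B})\cdot Z,
\end{equation*}
so that $A^0 = u(0,0)$ and $B^0 = A^{-\tau}\tilde{B}$. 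The estimate (\ref{oblique_constant_H1+a_est}) then follows on a smaller universal half-cylinder from $|A^{-1}Z| \leq C_{\delta_0}|Z|$, and the regularity claim for $u_t$ comes from $u_t(Z,t) = v_t(A^{-1}Z,t)$ (chain rule, since $A$ is time-independent).

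The one step deserving care is the verification that $\beta \cdot B^0 = 0$. Using the explicit upper-triangular form of $A$ one computes
\begin{equation*}
B^0_i = \tilde{B}_i\quad (i<n), \qquad B^0_n = -\beta_n^{-1}\sum_{i=1}^{n-1}\beta_i\tilde{B}_i,
\end{equation*}
so that $\beta \cdot B^0 = \sum_{i<n}\beta_i\tilde{B}_i + \beta_n B^0_n = 0$. This identity is what encodes the fact that the Neumann condition for $v$ transforms exactly into the oblique condition for $u$, and it is the only non-trivial piece of bookkeeping in the reduction; all the analytic content already sits inside Theorem \ref{Neumann_H1+a}.
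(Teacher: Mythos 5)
Your argument is correct and follows essentially the same route as the paper: change variables by $Z = AX$ to convert the constant oblique problem into a Neumann problem for $v(X,t) = u(AX,t)$, invoke Theorem~\ref{Neumann_H1+a}, and pull the resulting polynomial back through $A^{-1}$. The only cosmetic deviation is that you introduce an explicit parabolic rescaling $w(X,t)=v(r_0X,r_0^2t)$ to normalize the domain to $Q_1^+$, whereas the paper simply picks a universal radius $r < \delta_0/(\delta_0+1)$ so that $Q_r^+ \subset \tilde{Q}_1^+$ and applies the Neumann theorem directly; both devices serve the same purpose. Your explicit verification of $\beta\cdot B^0=0$ using the upper-triangular form of $A$ (giving $B^0_i=\tilde{B}_i$ for $i<n$ and $B^0_n=-\beta_n^{-1}\sum_{i<n}\beta_i\tilde{B}_i$) matches the paper's computation of $(A^{-1})^\tau\tilde{B}^0$, and the chain-rule observation for $u_t$ is the same. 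No gaps.
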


\begin{proof}
Let $A$ be the transformation defined in section \ref{changeofvar}. Define $v(X,t)=u(AX,t)$, for $(X,t) \in Q_r^+$, where $0<r<\frac{\delta_0}{\delta_0+1}<1$. Note that $Q_r^+ \subset \tilde{Q_1^+}$. Then 
\begin{align*} 
\begin{cases}
\tilde{F}(D^2v) -v_t = 0, &\ \ \ \ \ \text{ in } \ \tilde{Q_r^+} \\
v_y = 0, &\ \ \ \ \ \text{ on } \ Q_r^*.\\
\end{cases}
\end{align*}
So applying Theorem \ref{Neumann_H1+a} to $v$ we have that $v_{x_1},\dots,v_{x_{n-1}},v_y$ exist at $(0,0)$ and there exists a polynomial $\tilde{R}_{1}(X)=\tilde{A}^0+\tilde{B}^0 \cdot X$, where $\tilde{A}^0=v(0,0)$ and $\tilde{B}^0= \left( v_{x_1}(0,0), \dots, v_{x_{n-1}}(0,0), 0 \right)$  so that 
$$|v(X,t) - \tilde{R}_1(X)| \leq C \ ||v||_{L^\infty \left( Q_{r}^+ \right)} \ \left(|X|+|t|^{1/2}\right)^{1+\alpha}$$
for every $(X,t) \in \overline{Q}_{r/2}^+$, where $C>0$, $0<\alpha<1$ are universal constants. In addition, $v_t$ exists and it is $H^\alpha $ in $ \overline{Q}_{r/2}^+$ with the corresponding estimate being bounded by above by a term of the form $C \ ||v||_{L^\infty \left( Q_{r}^+ \right)}$.

Let $R_1(Z)=\tilde{R}_1(A^{-1}Z)=\tilde{A}^0+\tilde{B}^0\cdot A^{-1}Z=\tilde{A}^0+(A^{-1})^\tau \tilde{B}^0\cdot Z$ and observe that $\tilde{A}^0=v(0,0)=u(0,0)=:A^0$
and
\begin{align*}
(A^{-1})^\tau \tilde{B}^0&=\left( v_{x_1}(0,0),\dots,v_{x_{n-1}}(0,0), v_y(0,0)- \frac{\beta _1}{\beta_n}v_{x_1}(0,0)- \dots - \frac{\beta _{n-1}}{\beta_n}v_{x_{n-1}}(0,0)  \right) \\ 
&=\left( u_{z_1}(0,0),\dots,u_{z_{n-1}}(0,0), u_w(0,0)  \right)=:B^0
\end{align*}
Note that for $\rho=\frac{\delta_0r}{2(\delta_0+1)}<1$ if $(Z,t) \in Q_\rho^+$ then $(A^{-1}Z,t) \in \overline{Q}_{r/2}^+$, so
$$|u(Z,t) - R_1(Z)| \leq C \ ||u||_{L^\infty \left( Q_{r}^+ \right)} \ \left(|Z|+|t|^{1/2}\right)^{1+\alpha}$$
for every $(Z,t) \in \overline{Q}_{\rho}^+$. Furthermore  $u_t(Z,t)=v_t(A^{-1}Z,t)$ and $\norm{u_t}_{H^\alpha\left(Q_\rho^+\right)} \leq C \ \norm{u}_{L^\infty \left(Q_1^+\right)}$.
\end{proof}

\begin{thm} \label{oblique_general_H1+a} (Boundary $H^{1+\alpha}$-estimates for the general oblique derivative problem).
Let $g$ and $\beta$ be $H^{\gamma}$ locally on  $Q_{1}^*$, $f \in L^q \left( Q_1^+\right)$ with $q > \frac{(n+1)(n+2)}{2}$ and $u \in C \left( Q_1^+ \cup Q_1^*  \right)$ be bounded and satisfy in the viscosity sense
\begin{align*}
\begin{cases}
F(D^2u) -u_t = f, &\ \ \ \ \ \text{ in } \ Q_1^+ \\
\beta \cdot Du = g, &\ \ \ \ \ \text{ on } \ Q_1^*. \\
\end{cases}
\end{align*}
Then the first derivatives $u_{x_1},\dots,u_{x_{n-1}},u_y$ exist at $(0,0)$. Moreover there exists universal constant $0<\alpha_0 < 1$ and a polynomial $R_{1;0}(X)= A^{0}+ B^{0} \cdot X$, where $A^0=u(0,0)$ and $B^0= Du(0,0) \in \R^n$ so that
\begin{equation} \label{oblique_general_H1+a_est} 
|u(X,t) - R_{1;0}(X)| \leq C \left( ||u||_{L^\infty \left( Q_{1}^+ \right)}+||g||_{H^{\gamma}\left(  \overline{Q}_{1/2}^* \right)} +||f||_{L^q\left(  \overline{Q}_{1}^+ \right)} \right) \ \left(|X|+|t|^{1/2}\right)^{1+\alpha}
\end{equation}
for every $(X,t) \in \overline{Q}_{1/4}^+$, where $C>0$ is a universal constant.
\end{thm}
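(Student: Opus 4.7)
The plan is to adapt the standard Caffarelli-style approximation scheme (as in \cite{CC}, Chapter 8) to the oblique boundary setting, using Theorem \ref{oblique_constant_H1+a} (the constant-$\beta$ homogeneous case) as the reference problem. First I would make standard reductions: by subtracting $u(0,0)$ and an affine function (and adjusting $F$ accordingly, with the same ellipticity constants), and by rescaling/normalizing I may assume $u(0,0)=0$, $\|u\|_{L^\infty(Q_1^+)}\le 1$, $F(O)=0$, and that $\|g\|_{H^\gamma(\overline{Q}_{1/2}^*)}$, $[\beta]_{\gamma;\overline{Q}_{1/2}^*}$, $\|f\|_{L^q(Q_1^+)}$ are as small as we wish (by initially working in $Q_\mu^+$ for small universal $\mu$).

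The core is an \emph{approximation lemma}: for every $\eta>0$ there exists $\delta>0$ such that if $u$ satisfies the hypotheses with $\|f\|_{L^q}\le \delta$, $\|g-\beta(0,0)\cdot Du_0\|_{L^\infty(Q_1^*)}\le \delta$ (for an appropriate constant vector $Du_0$ chosen so that $\beta(0,0)\cdot Du_0 = g(0,0)$) and $\|\beta-\beta(0,0)\|_{L^\infty(Q_1^*)}\le \delta$, then there exists $v$ solving
\begin{align*}
\begin{cases}
F(D^2 v)-v_t = 0, & \text{in } Q_{3/4}^+,\\
\beta(0,0)\cdot Dv = g(0,0), & \text{on } Q_{3/4}^*,
\end{cases}
\end{align*}
with $\|u-v\|_{L^\infty(Q_{3/4}^+)}\le \eta$. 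This is obtained by contradiction/compactness: the $H^\alpha$-estimate (Theorem \ref{bdary_C^a_oblique}) gives equicontinuity, so a limit exists, and the closedness of viscosity solutions under uniform limits (together with the Aleksandrov-Bakel'man-Pucci-Tso estimate of Theorem \ref{ABP-Oblique} to control $f$ and the boundary defect) shows the limit satisfies the constant-coefficient problem. After subtracting the affine function $u_0(X)=Du_0\cdot X$ from $v$, Theorem \ref{oblique_constant_H1+a} applies and produces a linear polynomial $L(X)=A+B\cdot X$ (with $\beta(0,0)\cdot B=g(0,0)$) satisfying $|v-L|\le C_0 \rho^{1+\alpha}$ on $Q_\rho^+$ for a universal $\alpha\in(0,1)$ and universal $\rho$.

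Next I would run the iteration. Fix $r\in(0,\rho)$ small and universal with $C_0 r^{1+\alpha}\le \tfrac12 r^{1+\alpha'}$ for some chosen $\alpha'<\min\{\alpha,\gamma,2-(n+2)/q\}$. I claim there is a sequence of affine functions $L_k(X)=A_k+B_k\cdot X$ with $\beta(0,0)\cdot B_k = g(0,0)$ and
\begin{align*}
\sup_{Q_{r^k}^+}|u-L_k|\le r^{k(1+\alpha')},\qquad |A_{k+1}-A_k|\le C r^{k(1+\alpha')},\qquad |B_{k+1}-B_k|\le C r^{k\alpha'}.
\end{align*}
The inductive step: the rescaled function $u_k(X,t):=\bigl(u(r^kX,r^{2k}t)-L_k(r^kX)\bigr)/r^{k(1+\alpha')}$ satisfies an equation with rescaled operator $F_k(M)=r^{k(1-\alpha')}F(r^{k(\alpha'-1)}M)$ of the same ellipticity, rescaled right-hand side (small in $L^q$ thanks to the choice of $\alpha'$ and the Tso-type scaling), boundary vector $\beta_k(x,t)=\beta(r^kx,r^{2k}t)\to\beta(0,0)$ with modulus controlled by $r^{k\gamma}$, and boundary datum
\begin{align*}
g_k(x,t)=\frac{g(r^kx,r^{2k}t)-\beta_k(x,t)\cdot B_k}{r^{k\alpha'}}=\frac{[g(r^kx,r^{2k}t)-g(0,0)]+[\beta(0,0)-\beta_k(x,t)]\cdot B_k}{r^{k\alpha'}},
\end{align*}
whose $L^\infty$-norm is bounded by a small universal constant thanks to $\alpha'<\gamma$ and the uniform bound on $|B_k|$ (which follows from the summability of $|B_{k+1}-B_k|$). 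Applying the approximation lemma to $u_k$ and the constant-case estimate yields $L_{k+1}$, closing the induction.

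The main obstacle I expect is the approximation lemma: one has to show that in the compactness argument, the limiting function inherits the constant oblique boundary condition in the viscosity sense. Here the analogue of Proposition \ref{neumannclosedness} for the oblique case (or a direct argument using the constant-$\beta$ change of variables from Section \ref{changeofvar} to reduce to the Neumann closedness result) is required, together with the ABPT estimate (Theorem \ref{ABP-Oblique}) to ensure that small defects in $f$ and in the boundary datum $g$ produce only a small $L^\infty$-perturbation of the solution. Once these ingredients are in place, the conclusion \eqref{oblique_general_H1+a_est} on $\overline{Q}_{1/4}^+$ follows from the Cauchy sequences $\{A_k\}$, $\{B_k\}$ by the usual intermediate-scale argument: for $\rho\in[r^{k+1},r^k]$, interpolate between $|u-L_k|\le r^{k(1+\alpha')}$ and $|L_k-L_\infty|\le C r^{k(1+\alpha')}$.
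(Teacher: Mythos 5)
Your approach is genuinely different from the paper's, though both run a Caffarelli-style affine-approximation iteration. The paper's proof \emph{directly constructs} the reference solution $v$ of the constant-oblique problem (with boundary data $u-B_k\cdot X$ on $\partial_pQ_r^+\setminus Q_r^*$), applies Theorem~\ref{oblique_constant_H1+a} to it, and then controls the error $w=u-B_k\cdot X-v$ by the ABPT estimate (Theorem~\ref{ABP-Oblique}), bounding the oscillation of $w$ via $H^\alpha$-estimates on the lateral boundary layer. Your proposal instead proves an \emph{approximation lemma by compactness/contradiction}, which avoids the explicit ABPT bookkeeping at the cost of a stability statement. Both routes are standard, and each has its appeal: the direct route is quantitative and avoids sequential arguments; the compactness route is cleaner to state but demands closedness of the oblique condition under uniform limits.

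There are two concrete gaps in your proposal. First, the closedness result you need is not available in the paper: Proposition~\ref{neumannclosedness} covers only the Neumann condition with a fixed direction, whereas your compactness argument has a sequence of boundary vectors $\beta_n\to\beta(0,0)$. You acknowledge this, but neither of the routes you suggest is immediate: adapting the Neumann proof requires redoing the ``dichotomy'' perturbation $\phi+\gamma y - y^2/\gamma$ in the oblique direction, and the change-of-variables reduction from Section~\ref{changeofvar} is built for a single fixed $\beta$, so applying $A^0=A(\beta(0,0))$ to all the $u_n$ leaves a residual non-Neumann boundary condition $(A^0)^{-1}\beta_n\cdot Dv_n=\ldots$ that one still has to pass to the limit in. This is fixable, but it is a real lemma you would have to prove; the paper sidesteps it entirely by never taking limits in the approximation step.

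Second, the exponent restriction is off. With the normalization $u_k=(u(r^kX,r^{2k}t)-L_k(r^kX))/r^{k(1+\alpha')}$, the rescaled source is $f_k(X,t)=r^{k(1-\alpha')}f(r^kX,r^{2k}t)$, whose $L^q$-norm scales as $r^{k(1-\alpha'-(n+2)/q)}\,\|f\|_{L^q(Q_{r^k}^+)}$. Smallness therefore requires $\alpha'<1-\tfrac{n+2}{q}$, not $\alpha'<2-\tfrac{n+2}{q}$ as you wrote (the exponent $2-\tfrac{n+2}{q}$ belongs to the pure Tso scaling $u/r^{2k}$, appropriate for $H^{2+\alpha}$, not $H^{1+\alpha}$). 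If $\alpha'\in\bigl(1-\tfrac{n+2}{q},\,2-\tfrac{n+2}{q}\bigr)$ the rescaled $f_k$ need not be small and the compactness argument breaks. Once you replace $2-\tfrac{n+2}{q}$ by $1-\tfrac{n+2}{q}$ (or by the slightly smaller $\alpha(n,q)=\tfrac{2q-(n+1)(n+2)}{q(n+1)}$ the paper uses), the iteration closes.
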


Note that we may assume that $u(0,0)=0$, considering $u(X,t)-u(0,0)$ and that $g(0,0)=0$, considering $u(X,t) - \frac{g(0,0) \ y}{\beta_n(0,0)}$.

\begin{proof}
For convenience let us denote $K:=||u||_{L^\infty \left( Q_{1}^+ \right)}+||g||_{H^{\gamma}\left(  \overline{Q}_{1/2}^* \right)} +||f||_{L^q\left(  \overline{Q}_{1}^+ \right)}$ and $\beta^0:=\beta (0,0) \in \R^n$.

We intend to find some $B^0 \in \R^n$, with  $\beta^0 \cdot B^0 =0$ so that for universal $C>0, 0<\eta<1, 0<\rho<1, \alpha_0>0$ and $\alpha = \min\{\alpha_0, \gamma, \frac{2q-(n+1)(n+2)}{q(n+1)}\}$ we will have
\begin{equation} \label{oblique_general_H1+a_est1} 
\osc_{Q_{\rho \eta^k}^+} \left( u(X,t) - B^0 \cdot X \right) \leq CK \eta^{k(1+\alpha)}, \ \ \text{ for any } \ \ k \in \N. 
\end{equation} 

Now, to prove (\ref{oblique_general_H1+a_est1} ) we are going to show by induction that there exist universal constants $0<\eta <1, 0<\rho<1, \bar{C}>0, \alpha_0>0$ such that for $\alpha = \min\{\alpha_0, \gamma, \frac{2q-(n+1)(n+2)}{q(n+1)}\}$ we can find a vector $B_k \in \R^n$, with $\beta^0 \cdot B_k=0$ for any $k \in \N$ so that
\begin{equation} \label{oblique_general_H1+a_est2} 
\osc_{Q_{\rho \eta^k}^+} \left( u(X,t) - B_k \cdot X \right) \leq \bar{C}K \eta^{k(1+\alpha)}
\end{equation}
and
\begin{equation} \label{oblique_general_H1+a_est3} 
|B_{k+1}-B_k| \leq CK\eta^{k\alpha}.
\end{equation}
Note that the correct constants will be deduced from the induction. The details follow.

First, for $k=0$, take $B_0=0$ and choose any $\bar{C}\geq 2$. Next for the induction we assume that we have found vectors $B_0,B_1,\dots,B_{k_0}$ for which (\ref{oblique_general_H1+a_est2}) and (\ref{oblique_general_H1+a_est3}) are true. Denoting by $r:=\frac{\rho \eta ^{k_0}}{2}$ and $B:=B_{k_0}$ we have $\beta^0 \cdot B=0$ and
\begin{equation}\label{oblique_general_H1+a_est4} 
\osc_{Q_r^+} \left( u(X,t) - B \cdot X \right) \leq \frac{4}{\rho^{1+\alpha}} \bar{C}K r^{(1+\alpha)}.
\end{equation}

Now we are going to consider a suitable constant oblique derivative problem (as the one of Theorem \ref{oblique_constant_H1+a}). So let $v$ be the viscosity solution of
\begin{align*}
\begin{cases}
F(D^2v) -v_t = 0, &\ \ \ \ \ \text{ in } \ Q_r^+ \\
\beta^0 \cdot Dv=0, &\ \ \ \ \ \text{ on } \ Q_r^* \\
v=u-B \cdot X, &\ \ \ \ \ \text{ on } \ \partial_p Q_r^+ \setminus Q_r^*.
\end{cases}
\end{align*}
Then $v$ satisfies  ABPT-estimate for the oblique derivative case (see Theorem \ref{ABP-Oblique}) which gives
\begin{equation} \label{oblique_general_H1+a_est6} 
\osc_{Q_{r}^+} v \leq \osc_{Q_{r}^+}\left( u(X,t) - B \cdot X \right).
\end{equation}
From Theorem \ref{oblique_constant_H1+a} we also have that $\bar{B}=Dv(0,0)$ exists and $\beta^0 \cdot \bar{B} =0$. Moreover
\begin{equation} \label{oblique_general_H1+a_est7} 
\osc_{Q_{\tilde{r}}^+} \left(v(X,t)-\bar{B} \cdot X \right) \leq C_0 \left( \frac{\tilde{r}}{r}\right)^{1+\alpha_1} \ \osc_{Q_{r}^+} v 
\end{equation}
for any $\tilde{r} \leq \rho \ r$, where $0<\rho<1$ universal and $|\bar{B}| \leq  \frac{C}{r}\osc_{Q_{r}^+} v$. Next, we take $\tilde{r}=\eta r$ (for $0<\eta<\rho$) in (\ref{oblique_general_H1+a_est7} ). Hence
\begin{equation} \label{oblique_general_H1+a_est9} 
\osc_{Q_{\eta r}^+} \left(v(X,t)- \bar{B} \cdot X \right) \leq C_0 \eta^{1+\alpha_1} \osc_{Q_{r}^+} v.
\end{equation}
Now take (universal) $0<\eta<<1$ sufficiently small in order to have that $8 \ C_0 \eta^{\alpha_1}<1$. We denote by $1-\theta :=8\ C_0 \eta^{\alpha_1}$, where $0<\theta<1$ is a universal constant. Then 
\begin{equation} \label{oblique_general_H1+a_est10} 
\osc_{Q_{\eta r}^+} \left(v(X,t)-\bar{B} \cdot X \right) \leq \frac{(1-\theta)}{2} \ \eta \ \frac{\bar{C}}{\rho^{1+\alpha}}Kr^{1+\alpha}.
\end{equation}

Now to return to $u$ we define $w=u-B \cdot X-v$. Then
\begin{align*}
\begin{cases}
w \in S_p\left( \frac{\lambda}{n}, \Lambda ,f \right), &\ \ \ \ \ \text{ in } \ Q_r^+ \\
\beta \cdot Dw =g- \beta \cdot \left( B + Dv \right), &\ \ \ \ \ \text{ on } \ Q_r^* \\
w=0, &\ \ \ \ \ \text{ on } \ \partial_p Q_r^+ \setminus Q_r^*.
\end{cases}
\end{align*}

Now for $0<\mu <1$ (to be chosen universal) we denote by $\bar{r}:= r(1-\mu )<r$. We apply again Theorem \ref{ABP-Oblique}
\begin{align}\label{oblique_general_H1+a_est11}
\osc_{Q_{\bar{r}}^+} w &\leq Cr ||f||_{L^{n+1}(Q^+_r)}+ Cr ||g||_{L^\infty(Q^*_r)}+Cr ||\beta \cdot B||_{L^\infty(Q^*_r)} \nonumber \\
&\ \ \ +Cr ||\beta \cdot Dv||_{L^\infty(Q^*_{\bar{r}})}+ \osc_{{\partial_p Q_{\bar{r}}^+ \setminus Q_{\bar{r}}^*}} w \nonumber \\
&=: \textbf{I}+\textbf{II}+\textbf{III}+\textbf{IV}+\textbf{V}.
\end{align}
We want to bound all five terms by something of order $r^{1+\alpha}$. We start with term \textbf{I}. Using H\"older inequality and that $q >\frac{(n+1)(n+2)}{2}>n+1$ we get $\textbf{I} \leq C \ r^{1+\left(1 - \frac{n+2}{q} \right)} \ K$.
Next, for term \textbf{II}, we use the $H^\gamma$-regularity of $g$ and the fact that $g(0,0)=0$, then $\textbf{II}  = Cr||g-g(0,0)||_{L^\infty(Q^*_r)} \leq Cr^{1+\gamma}K.$ We continue with term \textbf{III}. We use the $H^\gamma$-regularity of $\beta$ and the fact that $\beta^0 \cdot B=0$, $\textbf{III} \leq Cr||\beta-\beta^0||_{L^\infty(Q^*_r)}\ |B| \leq Cr^{1+\gamma}K$,
where we have used that $|B|\leq CK$ which can be derived from (\ref{oblique_general_H1+a_est3}) and the fact that $|B_0|=0$. Next for term \textbf{IV}, we use again the $H^\gamma$-regularity of $\beta$ and the fact that $\beta^0 \cdot Dv=0$ on $Q_r^*$, we have $\textbf{IV} \leq Cr||\beta-\beta^0||_{L^\infty(Q^*_r)}\ ||Dv||_{L^\infty(Q^*_{\bar{r}})} \leq C_2 \rho^{\gamma} \frac{\bar{C}}{\rho^{1+\alpha}}Kr^{1+\alpha}.$ Finally we examine term \textbf{V}. Let $(X_0,t_0) \in \partial_p Q_{\bar{r}}^+ \setminus Q_{\bar{r}}^*$. If $|X_0|=\bar{r}$ we choose $\bar{X}_0 \in \left(\partial B_r\right)^+$ so that $|X_0-\bar{X}_0|=\mu r \leq \sqrt{2 \mu}r$ and $\bar{t}_0=t_0$. If $|X_0|<\bar{r}$ then $t_0=-(1-\mu)^2r^2$ and we choose $\bar{t}_0=-r^2$ then $|t_0-\bar{t}_0|^{1/2}=r \sqrt{\mu (2- \mu)} \leq \sqrt{2 \mu}r$ and $\bar{X}_0=X_0$. In any case  $|X_0-\bar{X}_0|+|t_0-\bar{t}_0|^{1/2}\leq  \sqrt{2 \mu}r $ and $(\bar{X}_0,\bar{t}_0) \in \partial_p Q_{r}^+ \setminus Q_{r}^*$ that is $w \left( \bar{X}_0, \bar{t}_0 \right) =0$. Then
\begin{align}\label{oblique_general_H1+a_est11_1}
|w \left( X_0,t_0 \right)|\leq |\left( u \left( X_0,t_0\right) - B \cdot X_0 \right)-\left( u \left( \bar{X}_0, \bar{t}_0 \right) -B \cdot \bar{X}_0 \right)|+|v \left( X_0,t_0\right)-v\left( \bar{X}_0, \bar{t}_0 \right)|
\end{align}
and we bound these terms using $H^\alpha$-estimates. Indeed, we have that
\begin{align*}
\begin{cases}
F(D^2(u-B\cdot X)) -(u-B\cdot X)_t = f, &\ \ \ \ \ \text{ in } \ Q_{2r}^+ \\
\beta \cdot D(u-B\cdot X) = g-\beta \cdot B, &\ \ \ \ \ \text{ on } \ Q_{2r}^*. \\
\end{cases}
\end{align*}
Then Theorem \ref{bdary_C^a_oblique} gives
\begin{align*}
||u-B\cdot X||_{H^{\alpha_2} \left( \overline{Q}^+_{r} \right) } \leq &\ \frac{C}{r^{\alpha_2}} \ \ ||u-B\cdot X||_{L^\infty \left( Q_{2r}^+ \right)} \\
&+\frac{C}{r^{\alpha_2}} \left( r^{\frac{n}{n+1}}||f||_{L^{n+1}\left( Q_{2r}^+ \right)}+r||g||_{L^\infty \left( Q_{2r}^* \right)} +r||\beta \cdot B||_{L^\infty \left( Q_{2r}^* \right)}\right).
\end{align*}
Next we apply global $H^\alpha$-estimates (see \cite{Wang2}) for $v$. Note that the values of $v$ on the parabolic boundary equal to $u-B\cdot X$ which is $H^{\alpha_2}$. So, for $0<\alpha_3 < <\alpha_2$ universal, 
\begin{align*}
||v||_{H^{\alpha_3} \left( \overline{Q}^+_{r} \right) } &\leq \ \frac{C}{r^{\alpha_3}} \ \left( ||v||_{L^\infty \left( Q_{r}^+ \right)}+r^{\alpha_2}||u-B\cdot X||_{H^{\alpha_2} \left( \overline{Q}^+_{r} \right) } \right)\\
&\leq \ \frac{C}{r^{\alpha_3}} \ \ ||u-B\cdot X||_{L^\infty \left( Q_{2r}^+ \right)} \\
&\ \ \ +\frac{C}{r^{\alpha_3}} \left( r^{\frac{n}{n+1}}||f||_{L^{n+1}\left( Q_{2r}^+ \right)}+r||g||_{L^\infty \left( Q_{2r}^* \right)} +r||\beta \cdot B||_{L^\infty \left( Q_{2r}^* \right)}\right).
\end{align*}
Now (\ref{oblique_general_H1+a_est11_1}) yields
\begin{align*}
|w(X_0,t_0)| &\leq C \mu ^{\alpha_3/2}\ \ ||u-B\cdot X||_{L^\infty \left( Q_{2r}^+ \right)} \\
&\ \ \ + C \mu ^{\alpha_3/2} \left( r^{\frac{n}{n+1}}||f||_{L^{n+1}\left( Q_{2r}^+ \right)}+r||g||_{L^\infty \left( Q_{2r}^* \right)} +r||\beta \cdot B||_{L^\infty \left( Q_{2r}^* \right)}\right) \\
&\leq \textbf{VI}+\textbf{I}'+\textbf{II}'+\textbf{III}'.
\end{align*}
For term \textbf{VI}, we use the hypothesis of the induction,  (\ref{oblique_general_H1+a_est4}), $\textbf{VI} \leq C_1 \mu ^{\alpha_3/2} \frac{\bar{C}}{\rho^{1+\alpha}} Kr^{1+\alpha}$. Moreover $\textbf{I}'  \leq C \mu ^{\alpha_3/2}  r^{1+\alpha(n,q)} \ K$, where $\alpha(n,q) := \frac{2q-(n+1)(n+2)}{q(n+1)}>0$. Note also that $\alpha(n,q) < 1 - \frac{n+2}{q}$.
Also, terms \textbf{II}$'$ and \textbf{III}$'$ are in fact the same as terms \textbf{II} and \textbf{III}. That is,
$$\textbf{V} \leq C_1 \mu ^{\alpha_3/2} \frac{\bar{C}}{\rho^{1+\alpha}}Kr^{1+\alpha}+C \mu ^{\alpha_3/2}  r^{1+\alpha(n,q) } \ K
+ C \mu ^{\alpha_3/2}  r^{1+\gamma } \ K.$$
So, returning to (\ref{oblique_general_H1+a_est11}), we have
$$\osc_{Q_{\bar{r}}^+} w \leq C K  r^{1+\alpha(n,q) } +C K r^{1+\gamma }+C_1 \mu ^{\alpha_3/2} \frac{\bar{C}}{\rho^{1+\alpha}}Kr^{1+\alpha} + C_2 \rho^{\gamma} \frac{\bar{C}}{\rho^{1+\alpha}}Kr^{1+\alpha}. $$

Next combine the above with (\ref{oblique_general_H1+a_est10}) and choose $\mu < 1 - 2\eta$ (then $2\eta<1-\mu$) 
\begin{align}\label{oblique_general_H1+a_est12} 
&\osc_{Q_{ \eta r}^+} \left[ u(X,t)-(B+\bar{B}) \cdot X \right] \leq \nonumber \\ &\frac{1}{2}(1-\theta) \eta \frac{\bar{C}}{\rho^{1+\alpha}}Kr^{1+\alpha}  +C K  r^{1+\alpha(n,q) } +C K r^{1+\gamma }+C_1 \mu ^{\alpha_3/2} \frac{\bar{C}}{\rho^{1+\alpha}}Kr^{1+\alpha}+C_2 \rho^{\gamma} \frac{\bar{C}}{\rho^{1+\alpha}}Kr^{1+\alpha}.
\end{align}

We choose the right constants $\alpha_0$, $\mu$ and $\bar{C}$. So, take $\alpha_0$ so that $\eta^{\alpha_0}=1-\frac{\theta}{2}$ and $\alpha=\min\{\alpha_0, \gamma, \frac{2q-(n+1)(n+2)}{q(n+1)}\}$. Take $\mu \leq \frac{\eta^{\frac{2(1+\alpha)}{\alpha_3}}}{(4C_1)^{\frac{2}{\alpha_3}}}$, $\rho \leq \frac{\eta^{\frac{1+\alpha}{\gamma}}}{(4C_2)^{\frac{1}{\gamma}}}$ and $\bar{C}$ large enough so that $\frac{\eta \theta \bar{C}}{4\rho^{1+\alpha}} \geq 2 C$ (note that our choices are all independent of $k_0$). Then we return to (\ref{oblique_general_H1+a_est12}) writing $1-\theta$ as $1-\frac{\theta}{2}-\frac{\theta}{2}$ and recalling that $r=\frac{\rho \eta^{k_0}}{2}\leq\eta^{k_0} $,
\begin{align*}
\osc_{Q_{ \rho \eta^{k_0+1}}^+} \left[ u(X,t)-(B+\bar{B}) \cdot X \right] &\leq K \left[\frac{1}{2} \left(1-\frac{\theta}{2}\right)\bar{C}\eta \eta^{k_0(1+\alpha)} +2C r^{1+\alpha } +\bar{C}\frac{\eta^{1+\alpha}}{2} \eta^{k_0(1+\alpha)}\right]\\
& \ \ \ -K\frac{\eta \theta \bar{C}}{4\rho^{1+\alpha}} r^{1+\alpha} \leq \bar{C}K \eta^{(k_0+1)(1+\alpha)}.
\end{align*}
We choose $B_{k_0+1}=B+\bar{B}$, then the above is (\ref{oblique_general_H1+a_est2}) for $k_0+1$. Also  $\beta^0 \cdot B_{k_0+1} = 0$ and $|B_{k_0+1}-B_{k_0}|=|\bar{B}| \leq \frac{C}{r} \bar{C}Kr^{1+\alpha}  \leq CK r^\alpha $.

Finally, it remains to get estimate (\ref{oblique_general_H1+a_est1}). Observe that (\ref{oblique_general_H1+a_est3}) ensures the existence of the limit $B_{\infty}:= \lim_{k\to \infty} B_k$ and this is the vector $B^0$ of (\ref{oblique_general_H1+a_est1}). Indeed, $\beta^0 \cdot B^\infty=0$ and for any $k \in \N$ we have
\begin{align*}
\osc_{Q_{\rho \eta^k}^+} \left( u(X,t)-B_\infty \cdot X \right) &\leq \osc_{Q_{\rho \eta^k}^+} \left( u(X,t)-B_k \cdot X \right)+ \eta^k (B_k-B_\infty)\\
&\leq \bar{C}K \eta^{k(1+\alpha)} + CK \eta^k \sum_{j=k}^\infty \eta^{j\alpha} \leq \bar{C}K \eta^{k(1+\alpha)} + CK \eta^k \frac{\eta^{k\alpha}}{1-\eta^\alpha} \\
&\leq CK \eta^{k(1+\alpha)}
\end{align*}
and the proof is complete.
\end{proof}

\section{H\"older Estimates for the second derivatives}\label{C2a}
\subsection{$H^{2+\alpha}$-estimates for the homogeneous Neumann case}
Here we prove $H^{2+\alpha}$-estimates. For, we will use first Lemma \ref{osc_u/y} which applied on the derivative $u_y$ will give the existence and H\"older continuity of $u_{yy}$. Then for the tangential directions, our purpose is to consider the restriction of $u$ on the thin-cylinder $Q_1^*$ and show that satisfies a suitable parabolic equation there. Hence we will be able to use the interior estimates proved in \cite{Wang2}.

First let us formulate here Theorem 1.1 of \cite{Wang2} in the form we are going to use. For operators that depend on $(X,t)$ we define
$$\theta_F(X,t) = \sup_{M \in S_n} \frac{|F(M,(X,t))-F(M,(0,0))|}{|M|+1}.$$ 

\begin{thm} \label{interior_H2+a_2} (Interior $H^{2+\alpha}$-estimates for more general operators).
Let $u \in C(Q_1)$ be a bounded viscosity solution of $F(D^2u,(X,t))-u_t=0$ in $Q_1$. Assume that any solution $v$ of the equation $F(D^2v+B,(0,0))-v_t=E$, where $B, E$ are such that $F(B,(0,0))=E$, satisfies $H^{2+\beta}$-estimates
\begin{equation} \label{interior_H2+a_2_est1}
\norm{u}_{H^{2+\beta}\left( Q_{r/2}\right)} \leq \frac{C}{r^{2+\alpha}} \left( \norm{u}_{L^\infty\left( Q_{r}\right)}+|F(O,(0,0))|\right).
\end{equation}
Assume also that 
\begin{equation} \label{interior_H2+a_2_est2}
\left( \frac{1}{m_{n+1}(Q_r)}\int_{Q_r} \theta_F ^{n+1} \right) ^{1/(n+1)} \leq C r^\alpha.
\end{equation}
Then $u_t$ and the second derivatives of $u$ exist in $\overline{Q}_{1/2}$. Moreover there exists universal constant $0<\alpha < \beta$ and a polynomial $R_{2;P_0}(X,t)= A_{P_0}+ B_{P_0} \cdot (X-X_0)+C_{P_0} (t-t_0) + \frac{1}{2} (X-X_0)^\tau D_{P_0} (X-X_0)$, where $A_{P_0}=u(P_0), B_{P_0}= \nabla_Xu(P_0), C_{P_0}=u_t(P_0)$ and $D_{P_0}:=  D^2_Xu(P_0)$, for $P_0 \in Q_{1/2}$, so that
\begin{equation}\label{interior_H2+a_2_est3}
|u(X,t) - R_{2;P_0}(X,t)| \leq C \left( ||u||_{L^\infty \left( Q_{1}^+ \right)}+|F(O,(0,0))|\right) \ p(P,P_0)^{2+\alpha}
\end{equation}
for every $P=(X,t) \in \overline{Q}_{1/2}(P_0)$, where $C>0$ is a universal constant.
\end{thm}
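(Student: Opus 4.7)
The plan is to implement a standard Caffarelli--Wang type iterative scheme at each interior point $P_0\in Q_{1/2}$: produce polynomials $R_k$ that approximate $u$ to order $\rho^{k(2+\alpha)}$ on shrinking cylinders $Q_{\rho^k}(P_0)$, by comparing at every scale with a solution of the \emph{frozen} equation $F(\,\cdot\,,(0,0))$, for which hypothesis (\ref{interior_H2+a_2_est1}) already supplies an $H^{2+\beta}$ estimate. After translating to $P_0=(0,0)$ and normalizing so that $\norm{u}_{L^\infty(Q_1)}+|F(O,(0,0))|\le 1$, I would construct inductively second-order polynomials
\[
R_k(X,t)=A_k+B_k\cdot X+C_k\, t+\tfrac12 X^\tau D_k X,
\]
satisfying the compatibility $F(D_k,(0,0))-C_k=0$, together with universal $0<\rho<1$ and $K>0$, such that $|u-R_k|\le \rho^{k(2+\alpha)}$ on $Q_{\rho^k}$ and the coefficient increments decay geometrically in the parabolic scaling (that is, $|A_{k+1}-A_k|+\rho^k|B_{k+1}-B_k|+\rho^{2k}(|C_{k+1}-C_k|+|D_{k+1}-D_k|)\le K\rho^{k(2+\alpha)}$). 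The base case $R_0\equiv 0$ is immediate from the normalization.

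For the inductive step, rescale $v(Y,s):=\rho^{-k(2+\alpha)}(u-R_k)(\rho^k Y,\rho^{2k}s)$ on $Q_1$; then $v$ solves a rescaled equation with operator $F_k(M,(Y,s))=\rho^{2-k\alpha}F(\rho^{k\alpha-2}M+D_k,(\rho^k Y,\rho^{2k}s))-C_k$, which has the same ellipticity constants. Introduce the auxiliary solution $w$ of the frozen problem $F(D^2w+D_k,(0,0))-w_s=0$ in $Q_{3/4}$ with $w=v$ on the parabolic boundary. The difference $v-w$ lies in a Pucci class whose right-hand side is controlled pointwise by a multiple of $\theta_F$ evaluated on the rescaled domain (times $1+|D_k|$, which is bounded uniformly in $k$ by the coefficient increments). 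An ABP-type estimate combined with the decay assumption (\ref{interior_H2+a_2_est2}) then forces $\norm{v-w}_{L^\infty(Q_{3/4})}\le C\rho^{k\alpha}$. Applying the assumed $H^{2+\beta}$ estimate (\ref{interior_H2+a_2_est1}) to $w$ at the origin yields a polynomial $P(Y,s)=a+b\cdot Y+c s+\tfrac12 Y^\tau d Y$ with $F(D_k+d,(0,0))-c=0$ and $|w-P|\le C r^{2+\beta}$ on $Q_r$, with $a,b,c,d$ bounded universally.

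I would then set $R_{k+1}$ to be the rescaled-back version of $R_k+\rho^{k(2+\alpha)}P$. A triangle inequality on $Q_{\rho^{k+1}}$ balances the ABP perturbation (of order $\rho^{k\alpha}$ in the rescaled problem) against the $H^{2+\beta}$ residual (of order $\rho^{2+\beta}$); the step closes provided $\rho$ is chosen small first, so that the frozen residual is dominated, and then $\alpha<\beta$ is picked small enough (but compatible with the exponent in (\ref{interior_H2+a_2_est2})) to absorb the $\theta_F$ perturbation. The main obstacle is precisely this joint calibration of $\rho$ and $\alpha$: the two smallness thresholds must be chosen in the right order and with the right quantitative dependence, and hypothesis (\ref{interior_H2+a_2_est2}) is used with the \emph{same} exponent $\alpha$ that appears in the final estimate.

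Finally, the geometric bounds on the coefficient increments make $(A_k,B_k,C_k,D_k)$ Cauchy, and their limit gives the polynomial $R_{2;P_0}$; interpolating between the discrete scales $\rho^{k+1}\le r\le \rho^k$ upgrades the inductive bound to (\ref{interior_H2+a_2_est3}) for every $r\le \tfrac12$. Since the argument is uniform as $P_0$ ranges over $Q_{1/2}$, uniqueness of the parabolic Taylor expansion then identifies $A_{P_0}=u(P_0)$, $B_{P_0}=\nabla_X u(P_0)$, $C_{P_0}=u_t(P_0)$ and $D_{P_0}=D^2_X u(P_0)$, and in particular establishes the pointwise existence of these derivatives throughout $\overline{Q}_{1/2}$.
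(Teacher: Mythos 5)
The paper does not give a proof of Theorem \ref{interior_H2+a_2}: it is explicitly formulated as a restatement of Theorem 1.1 of \cite{Wang2}, and the authors invoke it as known. Your proposal reconstructs the standard Caffarelli--Wang perturbation scheme that underlies that reference, and the essential ingredients are all in place: iterated comparison with a frozen-coefficient solution, ABP control of the difference by the $L^{n+1}$-average of $\theta_F$, the hypothesis (\ref{interior_H2+a_2_est1}) applied to the frozen problem at each scale, geometric decay of the paraboloid coefficients, and interpolation between dyadic scales. A few bookkeeping slips are worth flagging. The rescaled operator should be $F_k(M,(Y,s))=\rho^{-k\alpha}\bigl[F(\rho^{k\alpha}M+D_k,(\rho^k Y,\rho^{2k}s))-C_k\bigr]$; your expression carries spurious factors of $\rho^{\pm 2}$. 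After undoing the $\rho^{k\alpha}$ normalization, the frozen comparison problem is $F(D^2w+D_k,(0,0))-w_s=C_k$ rather than $=0$, which is exactly why the compatibility $F(D_k,(0,0))=C_k$ must be carried through the induction. Finally, to absorb the $\theta_F$-perturbation uniformly in $k$ one needs the constant in (\ref{interior_H2+a_2_est2}) to be small (or one first shrinks the initial cylinder so that it is effectively small), a point your sketch glosses over; the cleaner order is to fix $\alpha\in(0,\beta)$ universal first and then take $\rho$ small depending on $\alpha$ and the constant in (\ref{interior_H2+a_2_est1}). None of this changes the architecture of the argument, which is correct and is indeed how the cited result is proved.
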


\begin{lemma} \label{osc_u/y}
Let $f$ be bounded in $Q_1^+$ and $u \in C \left( Q_1^+ \cup Q_1^*  \right)$ be bounded and satisfies in the viscosity sense
\begin{align*}
\begin{cases}
u \in S_p(\lambda,\Lambda,f), &\ \ \ \ \ \text{ in } \ Q_1^+ \\
u = 0, &\ \ \ \ \ \text{ on } \ Q_1^*. \\
\end{cases}
\end{align*}
Then there exist universal constants $0<\alpha<1, C>0$ so that for any $0< \rho \leq \frac{1}{2}$
\begin{equation}\label{osc_u/y_est}
\osc_{\overline{Q}_\rho^+} \frac{u}{y} \leq C \rho^\alpha \left( \osc_{\overline{Q}_{1/2}^+} \frac{u}{y} + ||f||_{L^\infty\left(  Q_{1}^+ \right)} \right).
\end{equation}
\end{lemma}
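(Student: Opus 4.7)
I would follow the classical oscillation-decay recipe for the tangential-derivative-like quantity $u/y$ at a flat Dirichlet boundary. The target is a one-step decay at the reference scale:
$$
\osc_{\overline{Q}_{\eta}^+}(u/y) \,\leq\, (1-\mu)\,\osc_{\overline{Q}_{1/2}^+}(u/y) \,+\, C\,\|f\|_{L^\infty(Q_1^+)},
$$
for some universal $\eta \in (0,1/2)$ and $\mu \in (0,1)$. The parabolic rescaling $(X,t) \mapsto (X/r, t/r^2)$, $u \mapsto u/r$ leaves $u/y$ invariant, rescales $f$ to $r f$, and preserves the Pucci class, so applying the same inequality to the rescaled function gives, at every scale $r \in (0,1/2]$, the estimate $\osc_{\overline{Q}_{\eta r}^+}(u/y) \leq (1-\mu) \osc_{\overline{Q}_r^+}(u/y) + C r \|f\|_{L^\infty}$. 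A standard geometric-series iteration with $\eta^\alpha = 1-\mu$ then produces \eqref{osc_u/y_est}.

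To establish the one-step decay, set $\tau := \osc_{\overline{Q}_{1/2}^+}(u/y)$, $M := \sup u/y$, $m := \inf u/y$ over $Q_{1/2}^+$, and introduce the auxiliary functions $w_1 := u - m y$ and $w_2 := M y - u$. Both are non-negative on $Q_{1/2}^+$, vanish on $Q_{1/2}^*$, satisfy $w_1 + w_2 \equiv \tau y$, and remain in the Pucci class $S_p(\lambda,\Lambda,\pm f)$ since subtracting a function linear in $X$ changes neither the Hessian nor $\partial_t$. At an interior reference point $P_* = (0, 1/4, -1/16)$ one has $w_1(P_*) + w_2(P_*) = \tau/4$, and after relabeling if necessary we may assume $w_2(P_*) \geq \tau/8$.

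The heart of the argument is to transport this interior lower bound on $w_2$ down to the flat boundary at the correct linear-in-$y$ rate. First, the interior weak Harnack inequality of \cite{ImbertSil} applied to $w_2$ around $P_*$ upgrades $w_2(P_*) \geq \tau/8$ into a pointwise lower bound $w_2 \geq c_1 \tau - C_1\|f\|_{L^\infty}$ on a sub-cylinder $\Sigma \subset Q_{1/2}^+$ kept uniformly away from $\{y=0\}$. Next, compare $w_2$ with a barrier of the form $b = c_1\tau\,\varphi - C_2 \|f\|_{L^\infty}\,\psi$ on $\overline{Q}_\eta^+$, where $\varphi$ is a fixed smooth non-negative function with an appropriate Pucci sub-extremal property, supported at the scale of $\Sigma$, vanishing on $\{y=0\}$ and on the rest of $\partial_p Q_\eta^+$, and obeying the quantitative Hopf-type bound $\varphi(X,t) \geq c_2\, y$ throughout $\overline{Q}_\eta^+$; while $\psi$ is an elementary smooth function absorbing the inhomogeneity, with $\psi \leq C y$. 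The ABP-type estimate of Theorem \ref{ABP-Oblique} (in its Dirichlet specialization) applied to $w_2 - b$ then gives $w_2 \geq b$ on $\overline{Q}_\eta^+$. Dividing by $y$ produces $u/y \leq M - c_0\tau + C\|f\|_{L^\infty}$ there, which together with the trivial $u/y \geq m$ delivers the one-step decay with $\mu = c_0$. The principal technical obstacle is the construction of $\varphi$ with the quantitative Hopf-type boundary growth $\varphi \geq c_2\, y$; this is a data-independent smooth computation on a fixed half-cylinder, after which the rest of the plan (linear split, scaling, iteration) is routine.
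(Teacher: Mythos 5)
Your proposal follows essentially the same route as the paper's proof: decompose into $My - u$ and $u - my$, apply the interior Harnack inequality, push the resulting bound to the flat boundary via a barrier with linear-in-$y$ growth, and iterate the one-step oscillation decay. One caution: as written, your barrier conditions are mutually inconsistent — $\varphi$ cannot vanish on the full lateral parabolic boundary of $Q_\eta^+$ and simultaneously satisfy $\varphi \geq c_2\,y$ throughout $\overline{Q}_\eta^+$; the paper instead works on a thin slab $\tilde H(\rho,\delta)$ with the explicit barrier $b(X,t) = y\bigl[1 - |x|^2/\tilde\rho^2 + t/\tilde\rho^2 + C_f(y-\delta\rho)/(\sqrt\delta\,\rho)\bigr]$, which is $\leq 0$ on the lateral sides, $\leq \delta\rho$ on the top face, and attains the Hopf-type lower bound only on an inner subregion $\tilde H(\rho/4,\delta)$ — exactly the computation you flag as the technical obstacle.
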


The proof can be found in the appendix. We continue with an immediate consequence. 

\begin{cor} \label{u_y}
Let $f$ be bounded in $Q_1^+$ and $u \in C \left( Q_1^+ \cup Q_1^*  \right)$ be bounded and satisfies 
\begin{align*}
\begin{cases}
u \in S_p(\lambda,\Lambda,f), &\ \ \ \ \ \text{ in } \ Q_1^+ \\
u = 0, &\ \ \ \ \ \text{ on } \ Q_1^*. \\
\end{cases}
\end{align*}
Then $u_y$ exists on $Q_1^*$ and for universal constants $C>0, 0<\alpha<1$ we have
\begin{equation} \label{u_y_est}
|u(X,t)-u_y(x,0,t)y| \leq C \left( \norm{u}_{L^\infty\left(\overline{Q}_1^+ \right)} + \norm{f}_{L^\infty\left(\overline{Q}_1^+ \right)} \right) y^{1+\alpha}
\end{equation}
for every $(X,t) \in \overline{Q}_{1/2}^+$. Moreover, $u_y$ is $H^\alpha\left(\overline{Q}_{1/2}^+ \right)$ with the corresponding norm depending only on universal quantities and $K:=\norm{u}_{L^\infty\left(\overline{Q}_1^+ \right)} + \norm{f}_{L^\infty\left(\overline{Q}_1^+ \right)} $.
\end{cor}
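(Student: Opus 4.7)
The plan is to apply Lemma \ref{osc_u/y} at every point of $Q_{1/2}^*$ and to extract both the existence of $u_y$ on the flat boundary and its H\"older regularity directly from the oscillation decay of the quotient $u/y$.

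First I would translate Lemma \ref{osc_u/y} to an arbitrary boundary point $P_0 = (x_0,0,t_0) \in Q_{1/2}^*$: because $u=0$ on $Q_1^*$ and the class $S_p(\lambda,\Lambda,f)$ is invariant under translations, the shifted function $\tilde u(X,t)=u(X+(x_0,0),t+t_0)$ satisfies the hypotheses of the lemma in a slightly smaller half-cylinder. This yields
\[
\osc_{\overline{Q}_\rho^+(P_0)} \frac{u(X,t)}{y} \;\leq\; C \rho^\alpha \left( \osc_{\overline{Q}_{1/4}^+(P_0)} \frac{u}{y} + \|f\|_{L^\infty(Q_1^+)} \right) \;\leq\; C K \rho^\alpha
\]
for every $0<\rho \leq 1/4$. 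The final inequality uses the standard boundary Lipschitz bound $|u(X,t)| \leq C K y$ for viscosity solutions in $S_p$ vanishing on a flat piece of boundary, which controls $\osc(u/y)$ by $CK$ independently of $P_0$.

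This Cauchy-type decay shows that $u(X,t)/y$ admits a limit $\ell(P_0)$ as $(X,t)\to P_0$ from inside $\overline{Q}_{1/2}^+$. Restricting the approach to the vertical segment $X=(x_0,y),\, t=t_0$ and using $u(P_0)=0$, one sees that $\ell(P_0)=\lim_{y\to 0^+} u(x_0,y,t_0)/y$, which is by definition $u_y(P_0)$; this produces the existence of $u_y$ on $Q_{1/2}^*$. The estimate \eqref{u_y_est} then follows immediately: given $(X,t)\in\overline{Q}_{1/2}^+$, set $P_0=(x,0,t)$ and $\rho=y$, so that the oscillation bound gives $|u(X,t)/y - u_y(x,0,t)|\leq CK y^\alpha$, and multiplying through by $y$ yields \eqref{u_y_est}.

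For the H\"older regularity of $u_y$, I would take two points $P_1,P_2\in Q_{1/2}^*$ at parabolic distance $d=p(P_1,P_2)$ and evaluate at the interior test point $Q=(x_1,d,t_1)$. Applying \eqref{u_y_est} centered at $P_1$ gives $|u(Q)-u_y(P_1)\, d|\leq CK d^{1+\alpha}$, while the translated estimate at $P_2$ (where $Q$ lies inside a half-cylinder around $P_2$ of radius comparable to $d$) gives $|u(Q)-u_y(P_2)\, d|\leq CK d^{1+\alpha}$. Dividing by $d$ and applying the triangle inequality produces $|u_y(P_1)-u_y(P_2)|\leq CK d^\alpha$, which is the desired $H^\alpha$ bound (the passage to the closed half-cylinder in the statement is through the natural $y$-independent extension via the boundary trace). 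The heavy lifting is entirely carried by Lemma \ref{osc_u/y}; the only subtleties I anticipate are securing the boundary Lipschitz bound $|u|\leq CK y$ used to absorb $\osc(u/y)$ into $CK$, and ensuring the constants are uniform across the translations to different base points $P_0$, which is automatic from translation-invariance of the hypotheses.
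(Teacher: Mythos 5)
Your proposal is correct and follows essentially the same route as the paper: both rest entirely on Lemma \ref{osc_u/y} (together with the boundary Lipschitz bound of Proposition \ref{Lipschitz} to control $\osc(u/y)$ by $CK$), derive existence of $u_y$ at flat boundary points via the Cauchy-type decay of $u/y$ along vertical segments, and then extract the $H^\alpha$ regularity of $u_y$ from the same oscillation estimate. The only cosmetic difference is that you triangulate through a common interior test point to compare $u_y(P_1)$ and $u_y(P_2)$, whereas the paper's appendix (Lemma \ref{Dirichlet_H1+a_hom}) compares the two difference quotients directly and lets $h\to 0$; these are the same argument packaged slightly differently.
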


\begin{proof}
Note first that the justification for the existence and $H^\alpha$-regularity of $u_y$ can be found in the proof of Lemma \ref{Dirichlet_H1+a_hom} (see appendix). Next let $(X,t) \in \overline{Q}_{1/2}^+$. We apply Lemma \ref{osc_u/y}(rescaled) in $\overline{Q}_{y}^+(x,0,t) \subset Q_1^+$ to obtain for small $h>0$, $\frac{u(X,t)}{y}-\frac{u(x,h,t)}{h} \leq C K \ y^a.$ So letting $h \to 0$, $\frac{u(X,t)}{y}-u_y(x,0,t) \leq  C K \ y^a$.
\end{proof}

Next we apply the above to $u_y$ to obtain the following.

\begin{cor} \label{u_yy}
Let $u \in C \left( Q_1^+ \cup Q_1^*  \right)$ be bounded and satisfies in the viscosity sense 
\begin{align*}
\begin{cases}
F(D^2u)-u_t=0, &\ \ \ \ \ \text{ in } \ Q_1^+ \\
u_y = 0, &\ \ \ \ \ \text{ on } \ Q_1^*. \\
\end{cases}
\end{align*}
Then $u_{yy}$ exists on $Q_1^*$ and for a universal constants $C>0, 0<\alpha<1$ we have
\begin{equation} \label{u_yy_est}
\abs{u(X,t)-u(x,0,t)-\frac{1}{2}u_{yy}(x,0,t)\ y^2} \leq C  \norm{u}_{L^\infty\left(\overline{Q}_1^+ \right)} \ y^{2+\alpha}
\end{equation}
for every $(X,t) \in \overline{Q}_{1/2}^+$. Moreover, $u_{yy}$ is $H^\alpha\left(\overline{Q}_{1/2}^+ \right)$ with the corresponding norm depending only on universal quantities and $K:=\norm{u}_{L^\infty\left(\overline{Q}_1^+ \right)} $.
\end{cor}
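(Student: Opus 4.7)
The strategy is to apply Corollary \ref{u_y} to the function $v := u_y$. By Theorem \ref{Neumann_H1+a} (up to rescaling and using $F(O)=0$), $v$ already exists and is continuous in $\overline{Q}_{1/2}^+$, and the Neumann condition for $u$ is exactly the homogeneous Dirichlet condition $v=0$ on $Q_{1/2}^*$ required by Corollary \ref{u_y}. Hence the only nontrivial step is to verify that $v\in S_p(\lambda/n,\Lambda,0)$ in $Q_{1/2}^+$.

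To establish this I would use difference quotients in the $y$-direction. For small $h>0$ define
$$w_h(X,t):=\frac{u(X+he_n,t)-u(X,t)}{h}.$$
Since $F$ is autonomous, both $u$ and $u(\cdot+he_n,\cdot)$ are viscosity solutions of $F(D^2\cdot)-\cdot_t=0$ on their common domain, so by Theorem 4.6 of \cite{Wang2} (the same tool invoked in Proposition \ref{diff_Neumann}) we have $w_h\in S_p(\lambda/n,\Lambda,0)$ in $Q_{1/2}^+\cap\{y<\tfrac{1}{2}-h\}$. By Theorem \ref{Neumann_H1+a} the family $w_h$ converges locally uniformly to $u_y$ in $\overline{Q}_{1/2}^+$, and the closedness of the Pucci class under uniform convergence (the analog of Proposition \ref{neumannclosedness} in the interior, which is classical, see \cite{CC}) yields $u_y\in S_p(\lambda/n,\Lambda,0)$ in $Q_{1/2}^+$.

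Having verified the hypotheses, I can apply Corollary \ref{u_y} to $v=u_y$ (with its modified universal ellipticity constants, rescaled to the unit half-cylinder). This produces $v_y=u_{yy}$ on $Q_{1/2}^*$, its $H^\alpha$-regularity in $\overline{Q}_\rho^+$ for some universal $\rho$, and the one-sided Taylor bound
$$|u_y(X,t)-u_{yy}(x,0,t)\,y|\leq C\,\|u\|_{L^\infty(\overline{Q}_1^+)}\,y^{1+\alpha}.$$
To recover the second-order statement (\ref{u_yy_est}), integrate this bound in the $y$-variable along the vertical segment from $(x,0,t)$ to $(X,t)$, using $u_y(x,0,t)=0$:
$$u(X,t)-u(x,0,t)=\int_0^y u_y(x,s,t)\,ds=\tfrac{1}{2}u_{yy}(x,0,t)\,y^2+\int_0^y E(x,s,t)\,ds,$$
where $|E(x,s,t)|\leq C\|u\|_{L^\infty}s^{1+\alpha}$, so the remainder is controlled by $\tfrac{C}{2+\alpha}\|u\|_{L^\infty}y^{2+\alpha}$, as claimed.

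The main obstacle is the first step: justifying rigorously that $u_y$ belongs to the Pucci $S_p$-class in the viscosity sense. This is not a direct consequence of $u$ solving the equation — it requires the stability of viscosity sub/supersolutions of the extremal equations under uniform limits of difference quotients, combined with the essential input that $w_h\to u_y$ converges uniformly (not only pointwise), which is exactly what the $H^{1+\alpha}$ regularity from Theorem \ref{Neumann_H1+a} provides.
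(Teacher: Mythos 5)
Your proof follows exactly the paper's route: apply Corollary \ref{u_y} to $v=u_y$, observing that $u_y$ solves the homogeneous Dirichlet problem in the Pucci class, and then integrate the resulting first-order bound in the $y$-variable. The paper merely asserts the membership $u_y\in S_p(\lambda/n,\Lambda)$ without justification, whereas you spell it out via $y$-difference quotients, Theorem 4.6 of \cite{Wang2}, and stability under uniform limits --- which is the standard argument and exactly parallels how the paper handles tangential directions in the proof of Theorem \ref{Neumann_H1+a}.
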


\begin{proof}
First we observe that $u_y$ exists in $Q_1^+ \cup Q_1^*$ from Theorem \ref{Neumann_H1+a} and moreover it satisfies the following
\begin{align*}
\begin{cases}
u_y \in S_p\left( \frac{\lambda}{n},\Lambda\right), &\ \ \ \ \ \text{ in } \ Q_1^+ \\
u_y = 0, &\ \ \ \ \ \text{ on } \ Q_1^*. \\
\end{cases}
\end{align*}
Hence we can apply Corollary \ref{u_y} to $u_y$. This means that $u_{yy}$ exists and it is $H^\alpha\left(\overline{Q}_{1/2}^+ \right)$. Also from (\ref{u_y_est}) we have
$$-CKy^{1+\alpha} \leq u_y(X,t)-u_{yy}(x,0,t)y\leq CKy^{1+\alpha}$$
for any $(X,t) \in Q_{1/2}^+$. Then we integrate in direction $y$ and for any $(X,t) \in Q_{1/2}^+$ we obtain
\begin{align*}
u(X,t)-u(x,0,t) = \int_{0}^{y} u_y(x,\rho,t) \ d\rho &\leq \int_{0}^{y} \left( u_{yy}(x,0,t)\rho + CK\rho^{1+\alpha} \right) \ d\rho \\
&= u_{yy}(x,0,t) \frac{y^2}{2} + CK y^{2+\alpha}.
\end{align*}
\end{proof}

\begin{prop} \label{thin_equation}
Let $u \in C \left( Q_1^+ \cup Q_1^*  \right)$ be bounded and satisfies in the viscosity sense 
\begin{align*}
\begin{cases}
F(D^2u)-u_t=0, &\ \ \ \ \ \text{ in } \ Q_1^+ \\
u_y = 0, &\ \ \ \ \ \text{ on } \ Q_1^*. \\
\end{cases}
\end{align*}
Consider the restriction of $u$ on $Q_1^*$, $v(x,t):=u(x,0,t)$. Moreover, denoting by $A(x,t):=u_{yy}(x,0,t)$ (which exists regarding Corollary \ref{u_yy}) we consider the operator
\begin{align} \label{thin_equation_1}
G(M,x,t):=  F \left( \begin{matrix}
M  &  0\\
0 & A(x,t)
\end{matrix} \right)
\end{align}
for $(x,t) \in Q_1^*$ and $M \in S_{n-1}$. Then in the viscosity sense
$$G\left(D^2v,x,t\right) -v_t=0, \ \ \text{ in } \ \ Q_1^*.$$
\end{prop}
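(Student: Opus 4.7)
The plan is to prove that $v$ is a viscosity subsolution of $G(D^2 v, x, t) - v_t = 0$ on $Q_1^*$; the supersolution case is symmetric. Let $\varphi(x,t)$ be a smooth test function touching $v$ from above at some $(x_0, t_0) \in Q_1^*$. First I would replace $\varphi$ with $\varphi(x,t) + c(|x-x_0|^4 + (t_0-t)^2)$ for small $c > 0$: this preserves $D^2_x \varphi$ and $\varphi_t$ at $(x_0, t_0)$ but makes the touching strict, so WLOG $\varphi - v \geq c(|x-x_0|^4 + (t_0-t)^2)$ on a small thin neighborhood of $(x_0, t_0)$.

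The core step is to lift $\varphi$ to a test function for $u$ on the half-cylinder. Writing $X_0 := (x_0, 0)$ and $d^2 := |x-x_0|^2 + (t_0 - t) \ge 0$, I would set
$$
\Phi(X,t) := \varphi(x,t) + \tfrac{1}{2} A(x_0, t_0)\, y^2 + K\bigl[\, y^{2+\alpha/2} + d^{2+\alpha/2}\,\bigr],
$$
where $\alpha$ is the Hölder exponent from Corollary \ref{u_yy} and $K > 0$ is a large universal constant. At $(X_0, t_0)$ the two $K$-correction terms have vanishing first and second spatial derivatives and vanishing time derivative, so
$$
D^2\Phi(X_0, t_0) = \begin{pmatrix} D^2_x \varphi(x_0, t_0) & 0 \\ 0 & A(x_0, t_0) \end{pmatrix}, \qquad \Phi_t(X_0, t_0) = \varphi_t(x_0, t_0),
$$
which is exactly the statement
$$
F(D^2\Phi(X_0, t_0)) - \Phi_t(X_0, t_0) = G(D^2_x \varphi(x_0, t_0), x_0, t_0) - \varphi_t(x_0, t_0).
$$
Next, using the expansion $u(X,t) = v(x,t) + \tfrac12 A(x,t)\,y^2 + E(X,t)$ with $|E| \leq C\|u\|_\infty y^{2+\alpha}$ from Corollary \ref{u_yy}, together with $|A(x,t) - A(x_0,t_0)| \leq C_A(|x-x_0|^\alpha + |t-t_0|^{\alpha/2})$, I would verify that for $K$ universally large and $r$ small enough,
$$
\Phi - u \geq (\varphi - v) + K(y^{2+\alpha/2} + d^{2+\alpha/2}) - \tfrac{C_A}{2} d^\alpha y^2 - C y^{2+\alpha} > 0
$$
throughout $\overline{Q}_r^+(X_0, t_0) \setminus \{(X_0, t_0)\}$. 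The two correction terms are scaled precisely to absorb the two error scales: $K y^{2+\alpha/2}$ dominates both $Cy^{2+\alpha}$ and $C_A d^\alpha y^2$ in the regime $y \geq d$, while $K d^{2+\alpha/2}$ does so in the regime $y < d$.

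To exploit the interior equation I would slide the contact point inward. Set $\hat\Phi_\eps(X,t) := \Phi(X,t) - \eps y$ for small $\eps > 0$. Since $\Phi_y(X_0, t_0) = 0$ (all pieces of $\Phi$ have this property at $y = 0$) and $u_y(X_0, t_0) = 0$ (Theorem \ref{Neumann_H1+a}), we get $(u - \hat\Phi_\eps)_y(X_0, t_0) = \eps > 0$, so $u - \hat\Phi_\eps > 0$ at nearby points with $y > 0$. On $Q_r^*(x_0, t_0)$ one has $u - \hat\Phi_\eps = u - \Phi \leq 0$ with equality only at $(X_0, t_0)$, while on the remaining part of $\partial_p Q_r^+(X_0, t_0)$ strict touching yields $u - \hat\Phi_\eps \leq -c_0 + \eps r < 0$ for $\eps < c_0/r$. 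Hence the positive maximum $M_\eps$ of $u - \hat\Phi_\eps$ is attained at an interior point $(X_\eps, t_\eps)$ with $y_\eps > 0$. There $\hat\Phi_\eps + M_\eps$ is a $C^{2, \alpha/2}$ test function touching $u$ from above, so the equation for $u$ gives
$$
F(D^2\Phi(X_\eps, t_\eps)) - \Phi_t(X_\eps, t_\eps) \geq 0.
$$
Letting $\eps \to 0^+$ forces $(X_\eps, t_\eps) \to (X_0, t_0)$ (by strict touching), and continuity delivers $G(D^2_x\varphi(x_0, t_0), x_0, t_0) - \varphi_t(x_0, t_0) \geq 0$.

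The main obstacle is the construction of $\Phi$: one must build on the half-cylinder a test function dominating $u$ whose second-order jet at $(X_0, t_0)$ is exactly the target block-diagonal Hessian with diagonal blocks $D^2_x\varphi(x_0, t_0)$ and $A(x_0, t_0)$, while simultaneously absorbing both the tangential Hölder oscillation of $A$ (giving errors $\sim d^\alpha y^2$) and the normal Taylor remainder (giving $\sim y^{2+\alpha}$). The Hölder exponent $\alpha$ supplied by Corollary \ref{u_yy} is precisely what makes the correction-term exponent $2 + \alpha/2$ viable in both regimes.
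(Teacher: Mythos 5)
Your proof is correct and follows the same overall blueprint as the paper's: extend the tangential test function by $\tfrac{1}{2}A(x_0,t_0)\,y^2$, perturb so that the contact with $u$ slides into the open half-cylinder, apply the viscosity inequality there, and pass to the limit. The execution differs in the perturbation mechanics, and your version is somewhat cleaner. The paper perturbs by $-\epsilon(|X|^2-t)$ and must then take the radius $r=r(\epsilon)\to 0$ in order to control the $y^{2+\alpha}$ remainder from Corollary \ref{u_yy} together with the $d^\alpha y^2$ oscillation of $A$; you instead build fixed corrections $K\bigl[y^{2+\alpha/2}+d^{2+\alpha/2}\bigr]$ calibrated to the two error regimes $y\ge d$ and $y<d$, which gives strict separation $\Phi>u$ on a cylinder of fixed radius, independent of $\epsilon$. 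The paper next translates $\tilde\phi$ by $h$ in the $y$-direction and has to split into cases according to the sign of $A(0,0)$ in order to keep $(\tilde\phi_h)_y>0$ on the flat part; your linear tilt $-\epsilon y$ handles both signs at once, since $(u-\hat\Phi_\epsilon)_y(X_0,t_0)=\epsilon>0$ holds regardless (using the pointwise $u_y(X_0,t_0)=0$ from Theorem \ref{Neumann_H1+a}), so the maximum is forced off the boundary with no case analysis. One technicality worth flagging: your $y^{2+\alpha/2}$ and $d^{2+\alpha/2}$ corrections make $\Phi$ only $C^2$ in $X$ and $C^1$ in $t$, not $C^\infty$, so it is not a smooth test function in the literal sense of Definition \ref{visc_sol1}; one either works with the standard relaxed class of parabolically $C^{2,1}$ test functions or inserts a brief mollification step, whereas the paper's polynomial perturbation keeps every test function genuinely smooth.
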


\begin{proof}
For convenience we show the result at $P_0=(0,0) \in Q_1^*$. Let $\phi$ be a test function on $Q_1^*$ that touches $v$ from below at $(0,0)$. Our aim is to show that
\begin{align*} 
F \left( \begin{matrix}
D^2\phi(0,0) &  0\\
0 & A(0,0)
\end{matrix} \right) - \phi_t(0,0) \leq 0.
\end{align*}
To do so we will try to extend $\phi$ into $Q_1^+$ and translate it suitably to turn it into a test function that touches $u$ at some point of $Q_r^+$. For small $\epsilon>0$ we consider, $\tilde{\phi}(X,t) = \phi(x,t) + \frac{A(0,0)}{2}y^2 - \epsilon (|X|^2-t).$ First, using Corollary \ref{u_yy} we can obtain that for sufficiently small $r>0$
\begin{equation} \label{thin_equation_2}
u(X,t) \geq \tilde{\phi}(X,t) +  \frac{\epsilon}{2}(|X|^2-t), \ \ \text{ for any } \ \ (X,t) \in  \overline{Q}_r^+.
\end{equation}
Indeed, Corollary \ref{u_yy} implies that for any $(X,t) \in Q_{1/2}^+$,
$$u(X,t) \geq u(x,0,t) + \frac{A(x,t)}{2}y^2 -CKy^{2+\alpha}$$
moreover, $A$ is $H^\alpha$ that is, $A(0,0)-A(x,t) \leq CK|x|^\alpha+CK|t|^{\frac{\alpha}{2}}.$
Hence $u(X,t)\geq u(x,0,t) + \frac{A(0,0)}{2}y^2 -CK|X|^{2+\alpha}-CK|t|^{\frac{\alpha}{2}} y^2$.
Now choose $0<r< \min \{ \rho, \left( \frac{\epsilon}{4CK} \right)^{1/\alpha} \}$, then for $(X,t) \in  \overline{Q}_r^+$ we have $u(X,t) \geq \phi(x,t) + \frac{A(0,0)}{2}y^2 -\frac{\epsilon}{2} (|X|^2-t)$.

Next, we translate suitably $\tilde{\phi}$ in order to achieve $u-\tilde{\phi}$ to have a local minimum. So we consider for $h \in \R$,
$$\tilde{\phi}_h(X,t) = \tilde{\phi}(x,y-h,t).$$
Then $\tilde{\phi}_h(X,t)= \tilde{\phi}(X,t) - A(0,0)yh + \frac{A(0,0)}{2}h^2 +2\epsilon hy- \epsilon h^2$.

Next, we observe that, $u(0,0,0)-\tilde{\phi}_h(0,0,0) = - \frac{A(0,0)}{2}h^2 + \epsilon h^2$
and by (\ref{thin_equation_2}),
\begin{equation*} \label{thin_equation_4}
u(X,t) - \tilde{\phi}_h(X,t) \geq  \frac{\epsilon}{2}(|X|^2-t) + (A(0,0)-2\epsilon)hy + u(0,0,0)-\tilde{\phi}_h(0,0,0)
\end{equation*}
for any $(X,t) \in \overline{Q}_r^+$. So we have the following
\begin{equation} \label{thin_equation_5}
u(X,t) - \tilde{\phi}_h(X,t) \geq  \frac{\epsilon}{2}r^2 + (A(0,0)-2\epsilon)hy + u(0,0,0)-\tilde{\phi}_h(0,0,0), \ \text{ on } \ \partial_p Q_r^+ \setminus Q_r^*.
\end{equation} 
\begin{equation} \label{thin_equation_6}
(\tilde{\phi}_h)_y = -A(0,0)h+2\epsilon h, \ \text{ on } \ \overline{Q}_r^*
\end{equation}

Subsequently, we split into two cases.

\hspace{-7mm} \underline{Case 1}: If $A(0,0) \leq 0$. We choose $h>0$ and we have: On $\partial_p Q_r^+ \setminus Q_r^*$, using (\ref{thin_equation_5}) we have, $u(X,t) - \tilde{\phi}_h(X,t) \geq  u(0,0,0)-\tilde{\phi}_h(0,0,0)$, choosing $0 < h \leq \frac{\epsilon r}{2(2\epsilon-A(0,0))}$. On $\overline{Q}_r^*$, by (\ref{thin_equation_6}) we know that $(\tilde{\phi}_h)_y>0$. Also $u_y=0$, hence $(u-\tilde{\phi}_h)_y <0$. This imply that  $u-\tilde{\phi}_h$ has a local (in the parabolic sense) minimum. Then, we use the equation at $(X_1,t_1)$, i.e. $F(D^2 \tilde{\phi}_h(X_1,t_1)) - (\tilde{\phi}_h)_t(X_1,t_1) \leq 0$. But
\begin{align*} 
D^2 \tilde{\phi}_h(X_1,t_1) = \left( \begin{matrix}
D^2 \phi (x_1,t_1) -2\epsilon I_{n-1} &  0\\
0 & A(0,0)-2\epsilon
\end{matrix} \right) 
\end{align*}
and, $(\tilde{\phi}_h)_t(X_1,t_1) =\phi_t(x_1,t_1) + \epsilon$. So, taking $\epsilon \to 0$ then $r \to 0$ and $(x_1,t_1) \to (0,0)$ and we obtain what we want.

\hspace{-7mm} \underline{Case 2}: If $A(0,0) >0$. We choose $h= -\bar{h}$, for $\bar{h}>0$ and $\epsilon < \frac{A(0,0)}{2}$, then we have: On $\partial_p Q_r^+ \setminus Q_r^*$, using (\ref{thin_equation_5}) we have, $u(X,t) - \tilde{\phi}_h(X,t) \geq  u(0,0,0)-\tilde{\phi}_h(0,0,0)$, choosing $0 < \bar{h} \leq \frac{\epsilon r}{2(A(0,0)-2\epsilon)}$. On $\overline{Q}_r^*$, by (\ref{thin_equation_6}) we have, $(\tilde{\phi}_h)_y = \bar{h}(A(0,0)-2\epsilon )>0$. Hence $(u-\tilde{\phi}_h)_y <0$. Then we can argue as in Case 1.

Finally note that a similar argument can be applied for test functions that touch $v$ by above.
\end{proof}

Now we are able to prove the main theorem of this section.

\begin{thm} \label{Neumann_H2+a} (Boundary $H^{2+\alpha}$-estimates for the Neumann problem).
Let $u \in C \left( Q_1^+ \cup Q_1^*  \right)$ be bounded and satisfy in the viscosity sense
\begin{align*} 
\begin{cases}
F(D^2u) -u_t = 0, &\ \ \ \ \ \text{ in } \ Q_1^+ \\
u_y = 0, &\ \ \ \ \ \text{ on } \ Q_1^*. \\
\end{cases}
\end{align*}
Then the second derivatives of $u$ exist in $\overline{Q}_{1/2}^+$. Moreover there exists universal constant $0<\alpha < 1$ and a polynomial $R_{2;P_0}(X,t)= A_{P_0}+ B_{P_0} \cdot (X-X_0)+C_{P_0} (t-t_0) + \frac{1}{2} (X-X_0)^\tau D_{P_0} (X-X_0)$, where $A_{P_0}=u(P_0), B_{P_0}= \left( u_{x_1}(P_0), \dots, u_{x_{n-1}}(P_0), 0 \right), C_{P_0}=u_t(P_0)$ and 
\begin{align*} 
D_{P_0}:=   \left( \begin{matrix}
u_{x_1x_1}(P_0)\ \dots \ u_{x_1x_{n-1}}(P_0)  &  0\\
\vdots\ \ \ \ \ \ \ddots \ \ \ \ \ \ \vdots &\vdots\\
u_{x_{n-1}x_1}(P_0)\  \dots \ u_{x_{n-1}x_{n-1}}(P_0)  &  0\\
0 \ \ \ \ \ \ \dots \ \ \ \ \ \ 0 & u_{yy}(P_0)
\end{matrix} \right)
\end{align*}
for $P_0 \in Q_{1/2}^*$, so that 
\begin{equation}\label{Neumann_H2+a_est}
|u(X,t) - R_{2;P_0}(X,t)| \leq C \left( ||u||_{L^\infty \left( Q_{1}^+ \right)}+|F(O)|\right) \ p(P,P_0)^{2+\alpha}
\end{equation}
for every $P=(X,t) \in \overline{Q}_{1/2}^+(P_0)$, where $C>0$ is a universal constant.
\end{thm}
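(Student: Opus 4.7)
The plan is to reduce the boundary $H^{2+\alpha}$-estimate at $P_0=(x_0,0,t_0)\in Q_{1/2}^*$ to an interior $H^{2+\alpha}$-estimate on the thin cylinder $Q_1^*$ via Proposition~\ref{thin_equation}, together with the normal-direction expansion supplied by Corollary~\ref{u_yy}. Set $v(x,t):=u(x,0,t)$ and $A(x,t):=u_{yy}(x,0,t)$. By Proposition~\ref{thin_equation}, $v$ is a viscosity solution on $Q_1^*$ of
\[
G(D^2v,x,t)-v_t=0,\qquad G(M,x,t):=F\!\begin{pmatrix} M & 0 \\ 0 & A(x,t)\end{pmatrix},
\]
so the regularity of $u$ along the tangential directions is governed by an interior parabolic equation with variable coefficients in $(n-1)+1$ variables.

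I would then verify that $G$ meets the hypotheses of Theorem~\ref{interior_H2+a_2} (with $n$ replaced by $n-1$). The operator $G(\cdot,x,t)$ is the restriction of the convex uniformly elliptic $F$ to a fixed affine subspace of $S_n$, and hence inherits both convexity and uniform ellipticity with constants depending only on $n,\lambda,\Lambda$. For the frozen operator $G(\cdot,x_0,t_0)$ the required $H^{2+\beta}$ estimate on solutions is the classical parabolic Evans--Krylov theorem for convex operators. Finally, Corollary~\ref{u_yy} gives $A\in H^\alpha(\overline{Q}_{1/2}^*)$ with a norm bounded by a universal multiple of $K:=\|u\|_{L^\infty(Q_1^+)}+|F(O)|$, and since only the single parameter $A(x,t)$ varies, this $H^\alpha$-control translates immediately into the averaged oscillation bound \eqref{interior_H2+a_2_est2} for $\theta_G$.

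Applying Theorem~\ref{interior_H2+a_2} to $v$ at $P_0^*:=(x_0,t_0)$ yields $v_t(P_0^*)$ and $D_x^2 v(P_0^*)$ together with a quadratic Taylor approximation of $v$ at $P_0^*$ with error $CK\,p((x,t),P_0^*)^{2+\alpha}$. To assemble the expansion of $u$ itself at $P_0$, I would combine this with the normal expansion $|u(X,t)-v(x,t)-\tfrac12 A(x,t)\,y^2|\le CK\,y^{2+\alpha}$ from Corollary~\ref{u_yy} (rescaled at $P_0$), and with the $H^\alpha$-approximation $|A(x,t)-A(P_0^*)|\le CK\,p((x,t),P_0^*)^\alpha$. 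Using $y$, $|x-x_0|$ and $|t-t_0|^{1/2}\le p(P,P_0)$, each of the three resulting error terms is controlled by $CK\,p(P,P_0)^{2+\alpha}$. This delivers \eqref{Neumann_H2+a_est} with
\[
A_{P_0}=v(P_0^*),\ B_{P_0}=(D_xv(P_0^*),0),\ C_{P_0}=v_t(P_0^*),\ D_{P_0}=\operatorname{diag}\bigl(D_x^2v(P_0^*),A(P_0^*)\bigr),
\]
the vanishing last entry of $B_{P_0}$ and the zero off-diagonal blocks of $D_{P_0}$ simply encoding the Neumann condition $u_y\equiv 0$ on $Q_1^*$.

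The main obstacle is the verification of the structural hypotheses of Theorem~\ref{interior_H2+a_2} for $G$. In particular one must confirm that the parabolic Evans--Krylov exponent $\beta$ for $G(\cdot,x_0,t_0)$ is strictly larger than the Hölder exponent coming from the $H^\alpha$-bound on $A$, so that the final $\alpha$ in the statement is a single universal constant. Once this is in place, the dimensional bookkeeping (from $n+1$ down to $n$ variables on $Q_1^*$ and back up again via the normal expansion) and the identification of the coefficients of $R_{2;P_0}$ with the appropriate second derivatives of $u$ at $P_0$ is essentially routine.
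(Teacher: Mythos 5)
Your proposal matches the paper's proof essentially step for step: restrict $u$ to the thin cylinder via Proposition~\ref{thin_equation}, verify that the resulting operator $G$ satisfies the hypotheses of Theorem~\ref{interior_H2+a_2} (convexity and ellipticity are inherited from $F$, and the $H^\alpha$-bound on $A$ from Corollary~\ref{u_yy} yields the required decay of $\theta_G$), apply the interior $H^{2+\alpha}$-estimate to get the tangential-time expansion, and finally glue on the normal expansion from Corollary~\ref{u_yy}, with $R_{2;P_0}(X,t)=\tilde{R}_{2;P_0}(x,t)+\tfrac{1}{2}A(P_0)y^2$. The concern you flag about matching exponents is handled exactly as you anticipate, by taking the final $\alpha$ to be the smaller of the Evans--Krylov exponent and the $H^\alpha$ exponent of $A$, both of which are universal.
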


Note that in this case the existence and $H^\alpha$-regulatiy of $u_t$ is already known from Theorem \ref{Neumann_H1+a}.

\begin{proof}
Our intention is to combine Corollary \ref{u_yy} and interior $H^{2+\alpha}$-estimates on $Q_1^*$ once from Proposition \ref{thin_equation} $u$ satisfies an equation there.

So, let $v(x,t)=u(x,0,t)$. Then $v$ satisfies $G(D^2v(x,t),(x,t))-v_t(x,t)=0$ in $Q_1^*$, where $G$ is defined in (\ref{thin_equation_1}). In order to use interior $H^{2+\alpha}$-estimates we have to verify that this equation satisfies the assumptions of Theorem \ref{interior_H2+a_2}. It is easy to check that $G$ has the same ellipticity constants as $F$. Next we examine if the quantity $\theta_G$ satisfies the assumption (\ref{interior_H2+a_2_est2}). Since $F$ is Lipschitz we have
\begin{align*}
0 \leq \theta_G(x,t) \leq  \sup_{M \in S_{n-1}} \frac{\norm{  \left( \begin{matrix}
0  &  0\\
0 & A(x,t)-A(0,0)
\end{matrix} \right) }}{|M|+1} \leq CK \max \{ |x|,|t|^{1/2} \}^\alpha.
\end{align*}
Finally, the assumption (\ref{interior_H2+a_2_est1}) can be derived by interior $H^{2+\alpha}$-estimates observing that the operator $G(M+B,(0,0))-E$ is convex and has the same ellipticity constants as $G$. 

We will show the result at $P_0=(0,0,0)$, for convenience. Applying Theorem \ref{interior_H2+a_2} to $v$ and we obtain that there exists a polynomial $\tilde{R}_{2;P_0}(x,t)= \tilde{A}_{P_0}+ \tilde{B}_{P_0} \cdot x+\tilde{C}_{P_0} t + \frac{1}{2} x^\tau \tilde{D}_{P_0} x$, where $\tilde{A}_{P_0}=v(0,0), \tilde{B}_{P_0}= \nabla_x v(0,0), \tilde{C}_{P_0}=v_t(0,0)$ and $\tilde{D}_{P_0}:=  D^2_x v(0,0)$ so that
\begin{equation*}
|u(x,0,t) - \tilde{R}_{2;P_0}(x,t)| \leq C \left( ||u||_{L^\infty \left( Q_{1}^+ \right)}+|F(O)|\right) \ \max \{ |x|,|t|^{1/2} \}^{2+\alpha}
\end{equation*}
for every $(x,t) \in \overline{Q}^*_{1/2}$. On the other hand we have estimate (\ref{u_yy_est}) of Corollary \ref{u_yy} which gives for $(X,t) \in \overline{Q}^+_{1/2}$,
\begin{align*}
\abs{u(X,t)-u(x,0,t)-\frac{1}{2}A(0,0)\ y^2} &\leq C  \norm{u}_{L^\infty\left(\overline{Q}_1^+ \right)} \ y^{2+\alpha} + \abs{A(x,t)-A(0,0)} \frac{y^2}{2} \\
&\leq C\left(||u||_{L^\infty \left( Q_{1}^+ \right)}+|F(O)|\right) \ \max \{ |X|,|t|^{1/2} \}^{2+\alpha}
\end{align*}
Then, we take $R_{2;P_0}(X,t) = \tilde{R}_{2;P_0}(x,t) + \frac{A(0,0)}{2}y^2$ and we get the result.
\end{proof}

\subsection{$H^{2+\alpha}$-estimates for the oblique derivative case}
In the present section we intent to obtain $H^{2+\alpha}$-estimates for the general oblique derivative problem (Theorem \ref{oblique_general_H2+a}). We achieve this again using an approximation method. We "approximate" the general problem by homogeneous problems with a suitable function $\beta$ in the oblique derivative condition (as in Lemma \ref{oblique_general_H2+a_ii}). To get Lemma \ref{oblique_general_H2+a_ii} we need to examine first the case when we have a non-homogeneous oblique derivative condition but with constant $\beta$ (Lemma \ref{oblique_general_H2+a_i}) which can be done again by approximating the problem with suitable constant oblique derivative problems. Thereafter we first examine a constant oblique derivative problem (Theorem \ref{oblique_constant_H2+a}) using the change of variables of section \ref{changeofvar}. For convenience we assume that $F(O)=0$.

\begin{thm} \label{oblique_constant_H2+a} (Boundary $H^{2+\alpha}$-estimates for the constant oblique derivative problem).
Let $u \in C \left( Q_1^+ \cup Q_1^*  \right)$ be bounded and satisfy in the viscosity sense
\begin{align*} 
\begin{cases}
F(D^2u) -u_t = 0, &\ \ \ \ \ \text{ in } \ Q_1^+ \\
\beta \cdot Du = 0, &\ \ \ \ \ \text{ on } \ Q_1^*\\
\end{cases}
\end{align*}
where $\beta$ is a constant function. Then the second derivatives of $u$ exist at $(0,0)$. Moreover there exists universal constant $0<\alpha < 1$ and a polynomial $R_{2;0}(Z,t)= A^{0}+ B^{0} \cdot Z+C^{0} t + \frac{1}{2} Z^\tau D^{0} Z$, where $A^{0}=u(0,0), B^{0}=Du(0,0) \in \R^n, C^{0}=u_t(0,0)$ and $D^0=D^2u(0,0) \in S_n$ so that
\begin{equation}\label{oblique_constant_H2+a_est}
|u(Z,t) - R_{2;0}(Z,t)| \leq C \ ||u||_{L^\infty \left( Q_{1}^+ \right)} \ \left(|Z|+|t|^{1/2}\right)^{2+\alpha}
\end{equation}
for every $(Z,t) \in \overline{Q}_{\rho}^+$, where $C>0$ and $0<\rho<1$ are universal constants.
\end{thm}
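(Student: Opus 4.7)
My plan is to mirror precisely the proof of Theorem \ref{oblique_constant_H1+a}, using the linear change of variables from subsection \ref{changeofvar} to transform the constant oblique problem into a homogeneous Neumann problem, then invoking Theorem \ref{Neumann_H2+a} in place of Theorem \ref{Neumann_H1+a}. Concretely, with $A$ the transformation introduced in subsection \ref{changeofvar} I would set $v(X,t) := u(AX,t)$ for $(X,t) \in Q_r^+$, where $0 < r < \frac{\delta_0}{\delta_0+1}$ so that $Q_r^+ \subset \tilde{Q}_1^+$. As already observed in subsection \ref{changeofvar}, $v$ is then a viscosity solution of $\tilde{F}(D^2v) - v_t = 0$ in $\tilde{Q}_r^+$ with $v_y = 0$ on $Q_r^*$, where $\tilde{F}(M) = F((A^{-1})^\tau M A^{-1})$ has universal ellipticity constants.

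In order to apply Theorem \ref{Neumann_H2+a} I need $\tilde{F}$ to inherit convexity from $F$. This is immediate: the map $M \mapsto (A^{-1})^\tau M A^{-1}$ is linear on $S_n$, so convex combinations commute with it. Theorem \ref{Neumann_H2+a} then produces the second derivatives of $v$ at $(0,0)$ together with a polynomial
\[
\tilde{R}_{2;(0,0)}(X,t) = \tilde{A}^0 + \tilde{B}^0 \cdot X + \tilde{C}^0 t + \tfrac{1}{2} X^\tau \tilde{D}^0 X ,
\]
with $\tilde{A}^0=v(0,0)$, $\tilde{B}^0=(\nabla_x v(0,0),0)$, $\tilde{C}^0=v_t(0,0)$ and $\tilde{D}^0$ the matrix of second spatial derivatives of $v$ at $(0,0)$ (whose last row and column vanish off the diagonal), satisfying the corresponding $(2+\alpha)$-estimate on $\overline{Q}_{r/2}^+$.

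Next I would pull this polynomial back through the change of coordinates by setting $R_{2;0}(Z,t) := \tilde{R}_{2;(0,0)}(A^{-1}Z,t)$, which expands to
\[
R_{2;0}(Z,t) = \tilde{A}^0 + (A^{-1})^\tau \tilde{B}^0 \cdot Z + \tilde{C}^0 t + \tfrac{1}{2} Z^\tau (A^{-1})^\tau \tilde{D}^0 A^{-1} Z .
\]
The chain rule together with the identity $D^2 u = (A^{-1})^\tau D^2 v\, A^{-1}$ from subsection \ref{changeofvar} identifies the coefficients as $A^0 = u(0,0)$, $B^0 = Du(0,0)$, $C^0 = u_t(0,0)$ and $D^0 = D^2u(0,0)$, so all the relevant derivatives of $u$ at $(0,0)$ exist. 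The compatibility $\beta \cdot B^0 = 0$ then follows directly from the viscosity condition $\beta \cdot Du = 0$ on $Q_1^*$ once $Du(0,0)$ is known classically.

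Finally, to transfer the estimate I would use $\norm{A^{-1}}_\infty \leq C_{\delta_0}$: for a suitable universal $0 < \rho < 1$, every $(Z,t) \in \overline{Q}_\rho^+$ satisfies $(A^{-1}Z,t) \in \overline{Q}_{r/2}^+$ with $|A^{-1}Z| \leq C_{\delta_0}|Z|$, and combining with $\norm{v}_{L^\infty(Q_r^+)} \leq \norm{u}_{L^\infty(Q_1^+)}$ yields
\[
|u(Z,t) - R_{2;0}(Z,t)| = |v(A^{-1}Z,t) - \tilde{R}_{2;(0,0)}(A^{-1}Z,t)| \leq C\, \norm{u}_{L^\infty(Q_1^+)} \left(|Z|+|t|^{1/2}\right)^{2+\alpha} .
\]
I do not foresee a genuine obstacle: the argument is essentially a verbatim transcription of the $H^{1+\alpha}$ case (Theorem \ref{oblique_constant_H1+a}), and the only subtle point is confirming that the transformed operator $\tilde{F}$ remains convex, which is a one-line observation. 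Everything else is bookkeeping through the linear map $A$.
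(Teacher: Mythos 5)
Your proposal matches the paper's proof essentially verbatim: both apply the linear change of variables $v(X,t)=u(AX,t)$ from subsection \ref{changeofvar}, observe that $\tilde F$ remains convex, invoke Theorem \ref{Neumann_H2+a}, pull the Taylor polynomial back through $A^{-1}$, and transfer the estimate using the universal bounds on $\norm{A}_\infty$, $\norm{A^{-1}}_\infty$. No discrepancies.
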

 
\begin{proof}
Let $A$ be the transformation defined in section \ref{changeofvar}. Define $v(X,t)=u(AX,t)$, for $(X,t) \in Q_r^+$, where $0<r<\frac{\delta_0}{\delta_0+1}<1$. Note that $Q_r^+ \subset \tilde{Q_1^+}$. Then 
\begin{align*} 
\begin{cases}
\tilde{F}(D^2v) -v_t = 0, &\ \ \ \ \ \text{ in } \ \tilde{Q_r^+} \\
v_y = 0, &\ \ \ \ \ \text{ on } \ Q_r^*\\
\end{cases}
\end{align*}
with $\tilde{F}$ convex. So applying Theorem \ref{Neumann_H2+a} to $v$ we have that the second derivatives of $v$ exist at $(0,0)$ and there exists a polynomial $\tilde{R}_{2}(X,t)=\tilde{A}^0+\tilde{B}^0 \cdot X+\tilde{C}^0 t + \frac{1}{2} X^\tau \tilde{D}^{0} X$, where $\tilde{A}^{0}=v(0,0), \tilde{B}^{0}=Dv(0,0) \in \R^n, \tilde{C}^0=v_t(0,0)$ and $\tilde{D}^0=D^2v(0,0) \in S_n$  so that 
$$|v(X,t) - \tilde{R}_2(X,t)| \leq C \ ||v||_{L^\infty \left( Q_{r}^+ \right)} \ \left(|X|+|t|^{1/2}\right)^{2+\alpha}$$
for every $(X,t) \in \overline{Q}_{r/2}^+$, where $C>0$, $0<\alpha<1$ are universal constants.

Let $R_2(Z,t)=\tilde{R}_2(A^{-1}Z,t)=\ \tilde{A}^0+\tilde{B}^0\cdot A^{-1}Z+\tilde{C}^0 t + \frac{1}{2} (A^{-1}Z)^\tau \tilde{D}^{0} A^{-1}Z =\ \tilde{A}^0+(A^{-1})^\tau \tilde{B}^0\cdot Z+\tilde{C}^0 t + \frac{1}{2} Z^\tau (A^{-1})^\tau \tilde{D}^{0} A^{-1}Z$ and observe that $\tilde{A}^0=v(0,0)=u(0,0)=:A^0$
and
\begin{align*}
(A^{-1})^\tau \tilde{B}^0&=\left( v_{x_1}(0,0),\dots,v_{x_{n-1}}(0,0), v_y(0,0)- \frac{\beta _1}{\beta_n}v_{x_1}(0,0)- \dots - \frac{\beta _{n-1}}{\beta_n}v_{x_{n-1}}(0,0)  \right) \\ 
&=\left( u_{z_1}(0,0),\dots,u_{z_{n-1}}(0,0), u_w(0,0)  \right)=:B^0
\end{align*}
and $\tilde{C}^0=v_t(0,0)=u_t(0,0)=:C^0$, $(A^{-1})^\tau \tilde{D}^{0} A^{-1}=D^2u(0,0)=:D^0$. Then
$$|u(Z,t) - R_2(Z,t)| \leq C \ ||u||_{L^\infty \left( Q_{r}^+ \right)} \ \left(|Z|+|t|^{1/2}\right)^{2+\alpha}$$
for every $(Z,t) \in \overline{Q}_{\rho}^+$, for $\rho=\frac{\delta_0r}{2(\delta_0+1)}$ . 
\end{proof} 

\begin{thm} \label{oblique_general_H2+a} (Boundary $H^{2+\alpha}$-estimates for the general oblique derivative problem).
Let $g$ and $\beta$ be $H^{1+\gamma}$ locally on  $Q_{1}^*$, $f \in H^\gamma \left( Q_1^+\right)$ and $u \in C \left( Q_1^+ \cup Q_1^*  \right)$ be bounded and satisfies in the viscosity sense
\begin{align*}
\begin{cases}
F(D^2u) -u_t = f, &\ \ \ \ \ \text{ in } \ Q_1^+ \\
\beta \cdot Du = g, &\ \ \ \ \ \text{ on } \ Q_1^*. \\
\end{cases}
\end{align*}
Then the second derivatives of $u$ and $u_t$ exist at $(0,0)$. Moreover there exists universal constant $0<\alpha_0 < 1$ and a polynomial $R_{2;0}(X,t)= A^{0}+ B^{0} \cdot X+
\Gamma^{0} t + \frac{1}{2} X^\tau D^{0} X$, where $A^{0}=u(0,0), B^{0}= Du(0,0) \in \R^n,  \Gamma^{0}=u_t(0,0)$ and $D^0=D^2u(0,0) \in S_n$ so that for $\alpha=\min\{\alpha_0, \gamma\}$,
\begin{equation}\label{oblique_general_H2+a_est}
|u(X,t) - R_{2;0}(X,t)| \leq C \left( ||u||_{L^\infty \left( Q_{1}^+ \right)}+||g||_{H^{1+\gamma}\left(  \overline{Q}_{1/2}^* \right)} +||f||_{ H^\gamma \left( Q_1^+\right)} \right) \ \left(|X|+|t|^{1/2}\right)^{2+\alpha}
\end{equation}
for every $(X,t) \in \overline{Q}_{1/4}^+$, where $C>0$ is a universal constant.
\end{thm}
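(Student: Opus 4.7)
The plan is to combine the approximation scheme used in the proof of Theorem~\ref{oblique_general_H1+a} with the $H^{2+\alpha}$-estimate for the constant-coefficient homogeneous problem provided by Theorem~\ref{oblique_constant_H2+a}. As indicated in the paragraph preceding the statement, the argument is staged through two intermediate lemmas (Lemma~\ref{oblique_general_H2+a_i}, handling constant $\beta$ with non-zero $f$ and $g$, and Lemma~\ref{oblique_general_H2+a_ii}, treating variable $\beta$ with homogeneous data); the final theorem then follows from a further iteration analogous to the one carried out for the $H^{1+\alpha}$-estimate.

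First I would normalize so that $u(0,0)=0$ and $g(0,0)=0$ (the latter by the substitution $u \mapsto u - g(0,0)\,y/\beta_n(0,0)$ used earlier), set $\beta^0 := \beta(0,0)$, write $K := ||u||_{L^\infty(Q_1^+)} + ||g||_{H^{1+\gamma}(\overline{Q}_{1/2}^*)} + ||f||_{H^\gamma(Q_1^+)}$, and take $\alpha := \min\{\alpha_0,\gamma\}$ with $\alpha_0$ the universal exponent from Theorem~\ref{oblique_constant_H2+a}. The heart of the proof is an induction producing quadratic polynomials
$$P_k(X,t) \;=\; A_k + B_k\cdot X + \Gamma_k\, t + \tfrac{1}{2}\, X^\tau D_k X$$
that satisfy the algebraic compatibilities $F(D_k)-\Gamma_k = f(0,0)$ and $\beta^0 \cdot \left(B_k + D_k(x,0)^\tau\right) = \nabla_{(x,t)} g(0,0)\cdot(x,t)$ at the origin, together with
$$\osc_{Q_{\eta^k}^+}(u - P_k) \;\leq\; \bar{C} K\, \eta^{k(2+\alpha)}, \qquad |P_{k+1}-P_k| \;\leq\; C K\, \eta^{k(1+\alpha)}$$
(in a suitable norm on coefficient increments), for a universal $0<\eta\ll 1$.

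For the inductive step at scale $r=\eta^{k_0}$, I would consider $v$ solving the homogeneous constant-$\beta^0$ problem on $Q_r^+$ with boundary value $v = u - P_{k_0}$ on $\partial_p Q_r^+\setminus Q_r^*$ (after subtracting a fixed quadratic to reduce to a problem of the form (\ref{oblique_constant_prob})). Theorem~\ref{ABP-Oblique} applied to $u-P_{k_0}-v$ bounds its oscillation on $Q_r^+$ by the residual errors from $f-f(0,0)$ (of order $||f||_{H^\gamma}\, r^{\gamma}$), from $g$ minus its first-order Taylor polynomial (of order $||g||_{H^{1+\gamma}}\, r^{1+\gamma}$), and from $(\beta-\beta^0)\cdot DP_{k_0}$ (of order $||\beta||_{H^{1+\gamma}}\, r^{1+\gamma}$ together with the $O(r)$ bound on $DP_{k_0}$ inside $Q_r^+$). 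Each contributes $O(K r^{2+\gamma})$ after the appropriate $r$-factor coming from ABPT. Theorem~\ref{oblique_constant_H2+a} then extracts from $v$ a quadratic $Q$ with $\osc_{Q_{\eta r}^+}(v-Q) \leq C(\eta r)^{2+\alpha_0}\, ||v||_{L^\infty(Q_r^+)}$; I set $P_{k_0+1} := P_{k_0} + Q$ and verify that the compatibilities are preserved.

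The main obstacle will be the careful bookkeeping needed to close the induction: choosing $\eta$ small enough for the decay factor $\eta^{2+\alpha_0}$ coming from the constant-coefficient estimate to dominate, and then choosing $\bar C$ large enough to absorb all residual terms uniformly in $k_0$, exactly in the spirit of the proof of Theorem~\ref{oblique_general_H1+a}. Convexity of $F$ enters crucially via Theorem~\ref{oblique_constant_H2+a}, which is the only tool giving quadratic approximation of $v$. Once the induction closes, $(P_k)$ converges (by telescoping its increments) to a quadratic $R_{2;0}$, and summing the geometric tail of the oscillation bounds across dyadic scales gives (\ref{oblique_general_H2+a_est}).
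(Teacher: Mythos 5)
You correctly anticipate the two-lemma structure (Lemmas~\ref{oblique_general_H2+a_i} and~\ref{oblique_general_H2+a_ii}) and the overall dyadic iteration, but the inductive step you describe does not use the lemma you cite and, as written, fails to close. You propose to approximate at scale $r=\eta^{k_0}$ by solving a homogeneous problem with the \emph{constant} frozen coefficient $\beta^0=\beta(0,0)$ and claim the boundary residual $(\beta-\beta^0)\cdot DP_{k_0}$ is of order $r^{2+\gamma}$ because ``$(\beta-\beta^0)$ is of order $r^{1+\gamma}$'' and ``$DP_{k_0}$ is $O(r)$ inside $Q_r^+$.'' Both estimates are wrong: $\beta\in H^{1+\gamma}$ only gives $|\beta-\beta^0|=O(r)$ (the Lipschitz bound), and $DP_{k_0}=B_{k_0}+D_{k_0}X$ contains the \emph{constant} vector $B_{k_0}$, which converges to $Du(0,0)$ and is therefore $O(K)$, not $O(r)$. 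Consequently $(\beta-\beta^0)\cdot DP_{k_0}=O(rK)$ on $Q_r^*$, and after the $Cr$ prefactor from Theorem~\ref{ABP-Oblique} the contribution is merely $O(Kr^2)$, which falls short of $O(Kr^{2+\alpha})$ by the factor $r^\alpha$; the dominating $r^2$-term prevents the induction from closing.

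The paper resolves this by \emph{linearizing} $\beta$ rather than freezing it: the auxiliary solution $v$ satisfies $(\beta^0+D\beta^0 x)\cdot Dv=0$ on $Q_r^*$ (which is precisely why Lemma~\ref{oblique_general_H2+a_ii} deals with an affine, not constant, coefficient), and the compatibility condition carried through the induction is the \emph{modified} one~(\ref{oblique_general_H2+a_est2_0}), namely $\sum_j\bigl[(D_k)_{ij}\beta_j^0+(\beta_j)_{x_i}^0(B_k)_j\bigr]=0$, coupling $D_k$, $D\beta^0$, and $B_k$. This exactly cancels the dangerous $D\beta^0 x\cdot B_{k_0}$ term, leaving $(\beta^0+D\beta^0 x)\cdot(D_{k_0}X+B_{k_0})=D\beta^0 x\cdot D_{k_0}X=O(r^2K)$, while the remainder $\beta-\beta^0-D\beta^0 x$ is genuinely $O(r^{1+\gamma})$; the boundary residuals are then $O(Kr^{2+\gamma})+O(Kr^3)$, small enough to close. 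Your ``flatter'' compatibility $\beta^0\cdot B_k=\sum_j(D_k)_{ij}\beta_j^0=0$ does not control this term. Two smaller points: the paper normalizes $f(0,0)=0$ and $\nabla_x g(0,0)=0$ before starting, and the coefficient increments decay at \emph{different} rates, $|B_{k+1}-B_k|=O(K\eta^{k(1+\alpha)})$ but $\|D_{k+1}-D_k\|,|\Gamma_{k+1}-\Gamma_k|=O(K\eta^{k\alpha})$, rather than at the single rate you state.
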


Note that we may assume that: $u(0,0)=0$ and $g(0,0)=0$.$f(0,0)=0$, considering $F'(M):=F(M)-f(0,0)$, then $F'(D^2u)-u_t=f-f(0,0)$. $g_{x_i}(0,0)=0$ for every $i=1,\dots,n-1$, considering 
$$\bar{u}(X,t):= u(X,t) - \frac{y}{\beta_n(0,0)} \sum_{k=1}^{n-1} g_{x_k}(0,0)x_k.$$
Then for
\begin{align*}
M_0= \left(  \begin{matrix}
0& \dots & 0 & \frac{g_{x_1}(0,0)}{\beta_n(0,0)}\\
\vdots & \vdots & \vdots &\vdots\\
0 & \dots & 0 & \frac{g_{x_{n-1}}(0,0)}{\beta_n(0,0)}\\
\frac{g_{x_1}(0,0)}{\beta_n(0,0)} & \dots & \frac{g_{x_{n-1}}(0,0)}{\beta_n(0,0)} & 0
\end{matrix} \right) \in S_n,
\end{align*} 
$F\left( D^2 \bar{u} + M_0 \right) - \bar{u}_t=f$ in $Q_1^+$ and $\bar{F}(M):=F(M+M_0)$ has the same ellipticity constants as $F$.

The next remark will be useful in the following proofs.

\begin{preremark} \label{useful_matrix}
Let 
\begin{align*}
D= \left(  \begin{matrix}
0& \dots & 0 & 0\\
\vdots & \vdots & \vdots &\vdots\\
0 & \dots & 0 & 0\\
0 & \dots & 0 & 1
\end{matrix} \right) 
\end{align*} 
Then there exists $t_0 \in \R$ so that $\bar{F}(\tau_0D)=0$. Moreover, $\abs{\tau_0} \leq C ||g||_{H^{1+\gamma}\left(  \overline{Q}_{1/2}^* \right)}$, where $C>0$ universal.
\end{preremark}

Indeed, denoting by $l:=\frac{|F(M_0)|}{\lambda}$, the ellipticity conditions gives that $ F \left( M_0+lD \right)  \geq 0$ and 
$F \left(M_0-lD \right)  \leq 0$.

Note that, in the following we denote $\bar{u}, \bar{g}, \bar{F}$ by $u, g, F$ for convenience. As we mention in the start, in order to prove Theorem \ref{oblique_general_H2+a} we prove first two special cases.

\begin{lemma} \label{oblique_general_H2+a_i} 
We assume the same as in Theorem \ref{oblique_general_H2+a} but with $f=0$ and $\beta$ a constant vector. Then the second derivatives of $u$ and $u_t$ exist at $(0,0)$. Moreover there exists universal constant $0<\alpha_0 < 1$ and a polynomial $R_{2;0}(X,t)= A^{0}+ B^{0} \cdot X+\Gamma^{0} t + \frac{1}{2} X^\tau D^{0} X$, where $A^{0}=u(0,0), B^{0}= Du(0,0) \in \R^n,  \Gamma^{0}=u_t(0,0)$ and $D^0=D^2u(0,0) \in S_n$ so that for $\alpha=\min\{\alpha_0, \gamma\}$,
\begin{equation}\label{oblique_general_H2+a_i_est}
|u(X,t) - R_{2;0}(X,t)| \leq C \left( ||u||_{L^\infty \left( Q_{1}^+ \right)}+||g||_{H^{1+\gamma}\left(  \overline{Q}_{1/2}^* \right)}  \right) \ \left(|X|+|t|^{1/2}\right)^{2+\alpha}
\end{equation}
for every $(X,t) \in \overline{Q}_{1/4}^+$, where $C>0$ is a universal constant.
\end{lemma}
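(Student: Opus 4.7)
The plan is to prove Lemma~\ref{oblique_general_H2+a_i} by iterating the constant-oblique $H^{2+\alpha}$ estimate (Theorem~\ref{oblique_constant_H2+a}) along dyadic scales, in direct analogy with the $H^{1+\alpha}$ argument of Theorem~\ref{oblique_general_H1+a}, but now with quadratic polynomials. After the normalizations listed in the text (which give $u(0,0)=g(0,0)=g_{x_i}(0,0)=0$ and, harmlessly, $F(O)=0$), I will build inductively a sequence of polynomials
\[
P_k(X,t)=A_k+B_k\cdot X+\Gamma_k t+\tfrac12 X^\tau D_k X
\]
satisfying two compatibility conditions that will be preserved throughout the iteration: (a) $F(D_k)=\Gamma_k$, and (b) $\beta\cdot B_k=0$ together with $(\beta^\tau D_k)_i=0$ for $i=1,\dots,n-1$, which (because $\beta$ is constant) amounts to $\beta\cdot DP_k\equiv 0$ on $Q_1^*$. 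Setting $K:=\|u\|_{L^\infty(Q_1^+)}+\|g\|_{H^{1+\gamma}(\overline{Q}_{1/2}^*)}$, picking $\alpha_0$ from Theorem~\ref{oblique_constant_H2+a}, and aiming for a universal $\alpha$ slightly below $\min\{\alpha_0,\gamma\}$, the inductive estimate is
\[
\sup_{Q_{\eta^k}^+}|u-P_k|\leq \bar C K\eta^{k(2+\alpha)},
\]
with universal $\eta\in(0,1)$, $\bar C>0$. The base case is $P_0\equiv 0$.

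For the inductive step, write $r:=\eta^k$ and $w_k:=u-P_k$. Convexity of $F$ and (a) guarantee that the shifted operator $\tilde F(M):=F(D_k+M)-F(D_k)$ is convex with the same ellipticity constants as $F$, independently of $D_k$; moreover $\tilde F(D^2 w_k)-(w_k)_t=0$ in $Q_r^+$, and property (b) reduces the boundary condition to $\beta\cdot Dw_k=g$ on $Q_r^*$. The normalizations on $g$ together with its $H^{1+\gamma}$-regularity yield $\|g\|_{L^\infty(Q_r^*)}\leq CKr^{1+\gamma}$. Let $v$ solve
\[
\tilde F(D^2 v)-v_t=0 \text{ in } Q_r^+,\qquad \beta\cdot Dv=0 \text{ on } Q_r^*,\qquad v=w_k \text{ on } \partial_p Q_r^+\setminus Q_r^*.
\]
Applying Theorem~\ref{ABP-Oblique} to $\pm(w_k-v)\in S_p(\lambda,\Lambda,0)$, which has zero lateral data and oblique datum $g$, gives
\[
\|w_k-v\|_{L^\infty(Q_r^+)}\leq Cr\|g\|_{L^\infty(Q_r^*)}\leq CKr^{2+\gamma},
\]
and hence $\|v\|_{L^\infty(Q_r^+)}\leq (\bar C+C)Kr^{2+\alpha}$ using $\alpha\leq\gamma$.

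Theorem~\ref{oblique_constant_H2+a}, applied to $v$ and rescaled to $Q_r^+$, produces a polynomial $R(X,t)=a+b\cdot X+\gamma\,t+\tfrac12 X^\tau d X$ satisfying $\tilde F(d)=\gamma$ together with $\beta\cdot b=0$ and $(\beta^\tau d)_i=0$ for $i<n$ (the latter two being forced by $\beta\cdot Dv=0$ on $Q_r^*$ after reading off the linear part of $R$ there), with
\[
\sup_{Q_{\eta r}^+}|v-R|\leq C\|v\|_{L^\infty(Q_r^+)}\eta^{2+\alpha_0}.
\]
Setting $P_{k+1}:=P_k+R$ preserves (a) and (b), and combining the two estimates (and $\alpha\leq\gamma$) gives
\[
\sup_{Q_{\eta^{k+1}}^+}|u-P_{k+1}|\leq \bigl[C+C(\bar C+C)\eta^{2+\alpha_0}\bigr]Kr^{2+\alpha}.
\]
Fixing $\alpha<\alpha_0$, then choosing $\eta$ small enough that $C\eta^{\alpha_0-\alpha}\leq \tfrac12$, and finally $\bar C$ large enough, the right-hand side is bounded by $\bar C K\eta^{(k+1)(2+\alpha)}$, closing the induction. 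Reading $|b|,|d|,|\gamma|$ off of $R$ via $\sup_{Q_{\eta r}^+}|R|\leq 2\|v\|_{L^\infty(Q_r^+)}$ produces the summable increments $|B_{k+1}-B_k|\leq CK\eta^{k(1+\alpha)}$ and $|\Gamma_{k+1}-\Gamma_k|+|D_{k+1}-D_k|\leq CK\eta^{k\alpha}$, so $A_k,B_k,\Gamma_k,D_k$ converge to limits $A^0,B^0,\Gamma^0,D^0$. A standard passage from the discrete scales $\eta^k$ to every $r\in(0,1/4]$ then yields \eqref{oblique_general_H2+a_i_est}, and comparing to the Taylor expansion of $u$ at the origin identifies $A^0=u(0,0)$, $B^0=Du(0,0)$, $\Gamma^0=u_t(0,0)$, $D^0=D^2u(0,0)$.

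The main obstacle is the calibration of the four universal constants $\alpha,\eta,\bar C,\alpha_0$ so that the inductive error closes uniformly in $k$: one needs $\alpha<\alpha_0$ to make $\eta^{2+\alpha_0}$ small relative to $\eta^{2+\alpha}$, and $\alpha\leq\gamma$ to convert the boundary ABPT error $r^{2+\gamma}$ into an acceptable $r^{2+\alpha}$. Equally crucial is the observation that the shifted operator $\tilde F(M)=F(D_k+M)-F(D_k)$ remains convex and preserves the ellipticity constants of $F$ for every $k$; this is precisely where the convexity hypothesis on $F$ enters, and is what permits Theorem~\ref{oblique_constant_H2+a} to be reapplied at each scale with constants uniform in $k$.
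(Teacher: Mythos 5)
Your proposal is correct and follows essentially the same route as the paper: the inductive construction of paraboloids $R_k$ carrying the compatibility conditions $F(D_k)=\Gamma_k$, $\beta\cdot B_k=0$, and $(\beta^\tau D_k)_i=0$ for $i<n$; the shifted operator $G(M)=F(M+D_{k_0})-\Gamma_{k_0}$ exploiting convexity; the comparison with the constant-oblique homogeneous solution $v$ via Theorem~\ref{ABP-Oblique}; and Theorem~\ref{oblique_constant_H2+a} applied to $v$ at each scale, together with the same calibration of $\alpha<\alpha_0$, $\eta$, $\bar C$. One tiny inaccuracy worth fixing: $w_k-v$ is the difference of two viscosity solutions of the same fully nonlinear equation, so it belongs to $S_p\bigl(\tfrac{\lambda}{n},\Lambda\bigr)$ rather than $S_p(\lambda,\Lambda,0)$, but this does not change the ABPT estimate or the argument.
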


\begin{proof}
Before we start let us denote for convenience $K:=||u||_{L^\infty \left( Q_{1}^+ \right)}+||g||_{H^{1+\gamma}\left(  \overline{Q}_{1/2}^* \right)}$.

We intend to find some $R^{0}(X,t)= B^{0} \cdot X+\Gamma^{0} t + \frac{1}{2} X^\tau D^{0} X$, with  $\beta \cdot B^0 =0$ and $F(D^0)-\Gamma^0=0$ so that for universal $C>0, 0<\eta<1, \alpha_0>0$ and $\alpha = \min\{\alpha_0, \gamma\}$ we will have
\begin{equation} \label{oblique_general_H2+a_i_est1} 
\osc_{Q_{\eta^k}^+} \left( u(X,t) - R^{0}(X,t) \right) \leq CK \eta^{k(2+\alpha)}, \ \ \text{ for any } \ \ k \in \N. 
\end{equation}

Now, to prove (\ref{oblique_general_H2+a_i_est1}) we are going to show by induction that there exist universal constants $0<\eta <<1, \bar{C}>0, \alpha_0>0$ such that for $\alpha = \min\{\alpha_0, \gamma\}$ we can find a paraboloid $R_k(X,t)= B_k\cdot X+\Gamma_k t + \frac{1}{2} X^\tau D_k X$, with 
\begin{equation}\label{oblique_general_H2+a_i_est2_0} 
F(D_k)-\Gamma_k=0, \ \ \beta \cdot B_k=0 \ \ \text{ and }  \ \ \sum_{j=1}^n(D_k)_{ij} \beta_j=0, \ i=1,\dots,n-1
\end{equation}
for any $k \in \N$ so that
\begin{equation} \label{oblique_general_H2+a_i_est2} 
\osc_{Q_{\eta^k}^+} \left( u(X,t) - R_k(X,t) \right) \leq \bar{C}K \eta^{k(2+\alpha)}
\end{equation}
and
\begin{equation} \label{oblique_general_H2+a_i_est3} 
||D_{k+1}-D_k|| \leq CK\eta^{k\alpha}, \ |\Gamma_{k+1}-\Gamma_k|| \leq CK\eta^{k\alpha}, \ |B_{k+1}-B_k| \leq CK\eta^{k(1+\alpha)}.
\end{equation}

First, for $k=0$, take $B_0=0$, $\Gamma_0=0$ and $(D_0)_{ij}=0$, for $ij \neq nn$ and $(D_0)_{nn}=\tau_0$ where $\tau_0$ is chosen so that $F(D_0)=0$ (see Remark \ref{useful_matrix}) and $\bar{C}$ large enough. 

Next for the induction we assume that we have found paraboloids $R_0,R_1,\dots,R_{k_0}$ for which  (\ref{oblique_general_H2+a_i_est2_0}), (\ref{oblique_general_H2+a_i_est2}) and (\ref{oblique_general_H2+a_i_est3}) are true. Denoting by $r:=\eta ^{k_0}$ we have 
\begin{equation}\label{oblique_general_H2+a_i_est4} 
\osc_{Q_r^+} \left( u(X,t) - R_{k_0}(X,t) \right) \leq \bar{C}K r^{(2+\alpha)}.
\end{equation}

Now we are going to consider a suitable constant oblique derivative problem (as the one of Theorem \ref{oblique_constant_H2+a}). So let $v$ be the viscosity solution of
\begin{align*}
\begin{cases}
G(D^2v) -v_t = 0, &\ \ \ \ \ \text{ in } \ Q_r^+ \\
\beta \cdot Dv=0, &\ \ \ \ \ \text{ on } \ Q_r^* \\
v=u-R_{k_0}, &\ \ \ \ \ \text{ on } \ \partial_p Q_r^+ \setminus Q_r^*
\end{cases}
\end{align*}
where $G(M)=F(M+D_{k_0})-\Gamma_{k_0}$ which is an elliptic operator with the same ellipticity constants as $F$. Also $G(O)=F(D_{k_0})-\Gamma_{k_0}=0$.
Then $v$ satisfies ABPT-estimate for the oblique derivative case (see Theorem \ref{ABP-Oblique}) which gives
\begin{equation} \label{oblique_general_H2+a_i_est6} 
\osc_{Q_{r}^+} v \leq \osc_{Q_{r}^+}\left( u(X,t) - R_{k_0}(X,t) \right).
\end{equation}
From Theorem \ref{oblique_constant_H2+a} we have that  $\bar{B}:=Dv(0,0)$, $\bar{\Gamma}:=v_t(0,0)$, $\bar{D}:=D^2v(0,0)$ exist and for $\bar{R}(X,t)=\bar{B}\cdot X + \bar{\Gamma}t+\frac{1}{2}X^\tau \bar{D}X$ we have
\begin{equation} \label{oblique_general_H2+a_i_est7} 
\osc_{Q_{\tilde{r}}^+} \left(v(X,t)-\bar{R} (X,t) \right) \leq C_0 \left( \frac{\tilde{r}}{r}\right)^{2+\alpha_1} \ \osc_{Q_{r}^+} v 
\end{equation}
for any $\tilde{r} \leq \rho \ r$, where $0<\rho<1$ universal and also 
\begin{equation} \label{oblique_general_H2+a_i_est8} 
|\bar{B}| \leq  \frac{C}{r}\osc_{Q_{r}^+} v, \ |\bar{\Gamma}| \leq  \frac{C}{r^2}\osc_{Q_{r}^+} v, \ ||\bar{D}||_\infty \leq  \frac{C}{r^2}\osc_{Q_{r}^+} v.  
\end{equation}
Note that $\beta \cdot \bar{B}=0$ and $F(\bar{D}+D_{k_0})-\Gamma_{k_0}-\bar{\Gamma}=0$. Also, $\beta \cdot Dv=0$ holds in the classical sense on $Q_1^*$ and we can differentiate this condition with respect to $x_i$, $i\leq n-1$ to get $\sum_{j=1}^n \bar{D}_{ij} \beta_j=0$.

Next, we take $\tilde{r}=\eta r$ (for $0<\eta<\rho$) in (\ref{oblique_general_H2+a_i_est7} ). Hence
\begin{equation} \label{oblique_general_H2+a_i_est9} 
\osc_{Q_{\eta r}^+} \left(v(X,t)- \bar{R} (X,t) \right) \leq C_0 \eta^{2+\alpha_1} \osc_{Q_{r}^+} v.
\end{equation}
Now take (universal) $0<\eta<<1$ sufficiently small in order to have that $C_0 \eta^{\alpha_1}<1$. We denote by $1-\theta :=C_0 \eta^{\alpha_1}$, where $0<\theta<1$ is a universal constant. Then 
\begin{equation} \label{oblique_general_H2+a_i_est10} 
\osc_{Q_{\eta r}^+} \left(v(X,t)-\bar{R}(X,t) \right) \leq (1-\theta) \ \eta^2 \ \osc_{Q_{r}^+}\left( u(X,t) - R_{k_0}  (X,t) \right) \leq (1-\theta)\ \eta^2 \ \bar{C}Kr^{2+\alpha}.
\end{equation}

Now to return to $u$ we define $w=u-R_{k_0}-v$. Note that $F(D^2(R_{k_0}+v))-(R_{k_0}+v)_t=F(D_{k_0}+D^2v)-\Gamma_{k_0}-v_t=0$. Moreover we can easily check that $DR_{k_0} = D_{k_0}X+B_{k_0}$, then on $Q_r^*$, $\beta \cdot DR_{k_0}= \sum_{k=1}^{n-1}\sum_{j=1}^n \beta_j  (D_{k_0})_{jk}x_k=0$. That is combining the above we have
\begin{align*}
\begin{cases}
w \in S_p\left( \frac{\lambda}{n}, \Lambda  \right), &\ \ \ \ \ \text{ in } \ Q_r^+ \\
\beta \cdot Dw =g, &\ \ \ \ \ \text{ on } \ Q_r^* \\
w=0, &\ \ \ \ \ \text{ on } \ \partial_p Q_r^+ \setminus Q_r^*.
\end{cases}
\end{align*}

Next we apply again Theorem \ref{ABP-Oblique} and then the $H^{1+\gamma}$-estimate for $g$ together with the fact that $g(0,0)=0$ and $Dg(0,0)=0$ to obtain
\begin{align*}\label{oblique_general_H2+a_i_est11}
\osc_{Q_{\bar{r}}^+} w \leq Cr ||g||_{L^\infty(Q^*_r)} =  Cr ||g-g(0,0)-Dg(0,0)||_{L^\infty(Q^*_r)}\leq CKr^{2+\gamma}.
\end{align*}

Next combining the above with (\ref{oblique_general_H2+a_i_est9}) we get
\begin{equation}\label{oblique_general_H2+a_i_est12} 
\osc_{Q_{ \eta r}^+} \left[ u(X,t)-(R_{k_0}(X,t)+\bar{R}(X,t))  \right] \leq (1-\theta) \eta^2 \bar{C}Kr^{2+\alpha} +C K r^{2+\gamma }.
\end{equation}
We choose the right constants $\alpha_0$ and $\bar{C}$. So, take $\alpha_0$ so that $\eta^{\alpha_0}=1-\frac{\theta}{2}$ and $\alpha=\min\{\alpha_0, \gamma\}$ and $\bar{C}$ large enough so that $\frac{\eta ^2\theta \bar{C}}{2} \geq C$. Then we return to (\ref{oblique_general_H2+a_i_est12}) writing $1-\theta$ as $1-\frac{\theta}{2}-\frac{\theta}{2}$ and recalling that $r=\eta^{k_0}$,
\begin{align*}
\osc_{Q_{ \eta^{k_0+1}}^+} \left[ u(X,t)-(R_{k_0}(X,t)+\bar{R}(X,t)) \right] &\leq K \left[\left(1-\frac{\theta}{2}\right)\bar{C}\eta^2 \eta^{k_0(2+\alpha)} +C r^{2+\alpha } -\frac{\eta^2 \theta \bar{C}}{2} r^{2+\alpha} \right]\\
&\leq \bar{C}K \eta^{(k_0+1)(2+\alpha)}.
\end{align*}
Choosing $R_{k_0+1}=R_{k_0}+\bar{R}$ we have (\ref{oblique_general_H2+a_i_est2}) for $k_0+1$. Note also that $F(D_{k_0}+\bar{D})-(\Gamma_{k_0}+\bar{\Gamma})=0$, $\beta \cdot B_{k_0+1} = 0$ and for any $i\leq n-1$, $\sum_{j=1}^n (D_{k_0+1})_{ij}\beta_j=0$. It remains to get (\ref{oblique_general_H2+a_i_est3}) for $k=k_0$. To do so, we use relation (\ref{oblique_general_H2+a_i_est8}) together with (\ref{oblique_general_H2+a_i_est6}) and then (\ref{oblique_general_H2+a_i_est4}).

Finally, it remains to get estimate (\ref{oblique_general_H2+a_i_est1}). Observe that (\ref{oblique_general_H2+a_i_est3}) yields the existence of the limits $B_{\infty}:= \lim_{k \to \infty} B_k$, $\Gamma_{\infty}:= \lim_{k\to \infty} \Gamma_k$ and $D_{\infty}:= \lim_{k\to \infty} D_k$ exist and $R^0(X,t)=B_{\infty} \cdot X + \Gamma_{\infty}t + \frac{1}{2}X^\tau D_{\infty}X$ satisfies (\ref{oblique_general_H2+a_i_est1}). Indeed, $\beta \cdot B_\infty=0$, $F(D_\infty)-\Gamma_{\infty}=0$ and for any $k \in \N$ we have
\begin{align*}
\osc_{Q_{\eta^k}^+} \left( u(X,t)-R^0(X,t) \right)&\leq \osc_{Q_{\eta^k}^+} \left( u(X,t)-R^0(X,t) \right)\\
&\ \ \ + \eta^k |B_k-B_\infty|+\eta^{2k} |\Gamma_k-\Gamma_\infty|+\frac{1}{2}\eta^{2k} ||D_k-D_\infty|| \\
&\leq \bar{C}K \eta^{k(2+\alpha)} + CK \eta^{k} \sum_{j=k}^\infty \eta^{j(1+\alpha)}+2CK \eta^{2k} \sum_{j=k}^\infty \eta^{j\alpha} \\
&\leq CK \eta^{k(2+\alpha)}
\end{align*}
using the sum of geometric series.
\end{proof}

\begin{lemma} \label{oblique_general_H2+a_ii} 
Let $F$ be convex, $\beta$ be constant function, $N_0 \in \R^{n \times n}$ with $||N_0||_\infty \leq C_1$ and $u \in C \left( Q_1^+ \cup Q_1^*  \right)$ be bounded and satisfies in the viscosity sense
\begin{align*}
\begin{cases}
F(D^2u) -u_t = 0, &\ \ \ \ \ \text{ in } \ Q_1^+ \\
\left(\beta+N_0X \right) \cdot Du = 0, &\ \ \ \ \ \text{ on } \ Q_1^*. \\
\end{cases}
\end{align*}
Then the second derivatives of $u$ and $u_t$ exist at $(0,0)$. Moreover there exists universal constant $0<\alpha < 1$ and a polynomial $R_{2;0}(X,t)= A^{0}+ B^{0} \cdot X+\Gamma^{0} t + \frac{1}{2} X^\tau D^{0} X$, where $A^{0}=u(0,0), B^{0}= Du(0,0) \in \R^n,  \Gamma^{0}=u_t(0,0)$ and $D^0=D^2u(0,0) \in S_n$
so that
\begin{equation}\label{oblique_general_H2+a_ii_est}
|u(X,t) - R_{2;0}(X,t)| \leq C  ||u||_{L^\infty \left( Q_{1}^+ \right)} \ \left(|X|+|t|^{1/2}\right)^{2+\alpha}
\end{equation}
for every $(X,t) \in \overline{Q}_{1/4}^+$, where $C>0$ depends on universal constants and on $C_1$.
\end{lemma}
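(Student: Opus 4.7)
The proof adapts the iterative scheme of Lemma~\ref{oblique_general_H2+a_i} to the linearly-varying oblique vector $\beta + N_0 X$. I construct inductively, at scales $r_k := \eta^k$ for a universally small $\eta$, paraboloids $R_k(X,t) = B_k \cdot X + \Gamma_k t + \tfrac{1}{2} X^\tau D_k X$ satisfying three compatibility conditions: (i) $F(D_k) - \Gamma_k = 0$, (ii) $\beta \cdot B_k = 0$, and (iii) $(D_k\beta + N_0^\tau B_k)_i = 0$ for $i = 1,\dots,n-1$, together with the oscillation bound $\osc_{Q_{r_k}^+}(u - R_k) \leq \bar C K r_k^{2+\alpha}$ (with $K := \|u\|_{L^\infty(Q_1^+)}$) and the usual Cauchy bounds for the increments. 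Condition (iii) is the precise analogue of the tangential condition from Lemma~\ref{oblique_general_H2+a_i}, arranged so that for $w := u - R_k$ the linear-in-$x$ part of $(\beta + N_0 X)\cdot(B_k + D_k X)$ vanishes identically on $Q_{r_k}^*$, leaving only the quadratic remainder $N_0 X \cdot D_k X$ of size $O(r_k^2)$.

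At the inductive step with $r = r_k$ and $w = u - R_k$, I compare $w$ with the solution $v$ of the auxiliary constant-$\beta$ problem
\begin{align*}
\begin{cases}
G_k(D^2 v) - v_t = 0 & \text{in } Q_r^+,\\
\beta \cdot Dv = 0 & \text{on } Q_r^*,\\
v = w & \text{on } \partial_p Q_r^+ \setminus Q_r^*,
\end{cases}
\end{align*}
where $G_k(M) := F(M + D_k) - \Gamma_k$ is convex with $G_k(O) = 0$. Since $\beta$ is constant here, Theorem~\ref{oblique_constant_H2+a} produces a paraboloid $\bar R(X,t) = \bar B \cdot X + \bar \Gamma t + \tfrac{1}{2} X^\tau \bar D X$ with $\beta \cdot \bar B = 0$, $(\bar D\beta)_i = 0$ for $i<n$, and $\osc_{Q_{\eta r}^+}(v - \bar R) \leq C_0 \eta^{2+\alpha_1}\osc_{Q_r^+}v \leq (1-\theta)\eta^2 \bar C K r^{2+\alpha}$, after choosing $\eta$ small so that $C_0 \eta^{\alpha_1 - \alpha} < 1-\theta$ for some universal $\theta \in (0,1)$; the inequality $\osc_{Q_r^+} v \leq \osc_{Q_r^+} w$ follows from Theorem~\ref{ABP-Oblique}. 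Because this $\bar R$ satisfies only $(\bar D\beta)_i = 0$, the sum $R_k + \bar R$ does not in general inherit condition~(iii). I remedy this by a small quadratic correction $\bar R'(X,t) = \tfrac{1}{2} X^\tau E X + \bar \Gamma' t$, where $E \in S_n$ is chosen so that $(E\beta)_i = -(N_0^\tau \bar B)_i$ for $i<n$ (a solvable underdetermined linear system) and $\bar \Gamma'$ is adjusted to preserve (i). Using $|\bar B| \leq C K r^{1+\alpha}$ from Theorem~\ref{oblique_constant_H2+a}, one has $\|E\|, |\bar \Gamma'| \leq CK r^{1+\alpha}$, so $\bar R'$ contributes only at order $r^{3+\alpha}$. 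With $R_{k+1} := R_k + \bar R + \bar R'$, conditions (i)--(iii) now hold for $R_{k+1}$.

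To estimate $w - v \in S_p(\lambda/n,\Lambda,0)$ on $Q_r^+$, note that it vanishes on $\partial_p Q_r^+ \setminus Q_r^*$ and on $Q_r^*$ satisfies $\beta \cdot D(w - v) = -N_0 X \cdot D_k X - N_0 X \cdot Dw$. The first term is $O(Kr^2)$. For the second, I apply Theorem~\ref{oblique_general_H1+a} to the rescaled function $\tilde w(X,t) := w(rX,r^2 t)/(\bar C K r^{2+\alpha})$ on $Q_1^+$: the rescaled oblique vector $\beta + r N_0 X$ is $C^\infty$ and the rescaled boundary datum is universally bounded, so $|D \tilde w|$ is universally bounded on $Q_{1/2}^*$, whence $\|Dw\|_{L^\infty(Q_{r/2}^*)} \leq C\bar C K r^{1+\alpha}$ and $|N_0 X \cdot Dw| \leq C\bar C K r^{2+\alpha}$ on $Q_{r/2}^*$. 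Theorem~\ref{ABP-Oblique} (combined with a standard localization to cover the relevant sub-cylinder) then yields $\|w - v\|_{L^\infty(Q_{\eta r}^+)} \leq CKr^3$. Combining the two pieces, $\osc_{Q_{\eta r}^+}(u - R_{k+1}) \leq (1-\theta)\eta^2 \bar C K r^{2+\alpha} + CKr^3$, which for $r \leq 1$ and $\bar C$ chosen sufficiently large (depending on the universal constants and on $C_1$) is $\leq \bar C K(\eta r)^{2+\alpha}$, closing the induction. Passing to the limit via the Cauchy bounds, as at the end of Lemma~\ref{oblique_general_H2+a_i}, yields the limiting paraboloid $R_{2;0}$ and the pointwise $H^{2+\alpha}$-approximation at $(0,0)$.

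The main obstacle is the quantitative $L^\infty$ control of $Dw$ on the boundary at the right scale: a naive a priori bound on $|Du|$ gives a perturbation of order $r$, too large for the inductive estimate, and the crucial gain of $r^{1+\alpha}$ must be extracted from the already-proved $H^{1+\alpha}$-regularity (Theorem~\ref{oblique_general_H1+a}) applied to the rescaled problem at each step; the additional bookkeeping for the quadratic correction $\bar R'$ preserving condition (iii) is a second, purely algebraic, subtlety absent from Lemma~\ref{oblique_general_H2+a_i}.
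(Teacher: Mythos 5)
Your proposal takes a genuinely different route from the paper's. The paper does \emph{not} re-run the Campanato-style iteration: it reduces Lemma~\ref{oblique_general_H2+a_ii} to the already-proved Lemma~\ref{oblique_general_H2+a_i} by a single quadratic change of the dependent variable. One chooses $N\in S_n$ with $N\beta = N_0^\tau Du(0,0)$ (solvable since $\beta_n\neq 0$, with $\|N\|\leq C|Du(0,0)|$), and sets $v:=u+\tfrac12 X^\tau NX$. Then $v$ solves $F(D^2v-N)-v_t=0$ in $Q_1^+$ with a \emph{constant} oblique vector $\beta$ and datum $g(x,t)=(N_0(x,0))\cdot(Du(0,0)-Du(x,0,t))$ on $Q_1^*$; the $H^{1+\alpha}$-regularity of $Du$ up to $Q^*_1$ from Theorem~\ref{oblique_general_H1+a} makes $g$ punctually $H^{1+\gamma}$ at the origin with vanishing value and gradient, so Lemma~\ref{oblique_general_H2+a_i} applies to $v$ directly. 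Your scheme re-derives everything from scratch, and the same regularity input (Theorem~\ref{oblique_general_H1+a}) enters in a rescaled form to control $Dw$ on the boundary. So the two approaches draw on the same key ingredients, but the paper's substitution trick makes the lemma almost a corollary.

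That said, there is a genuine gap in your iteration that you should not gloss over. You write $|\bar B|\leq CKr^{1+\alpha}$ and hence $\|E\|,|\bar\Gamma'|\leq CKr^{1+\alpha}$, but Theorem~\ref{oblique_constant_H2+a} actually gives $|\bar B|\leq \tfrac{C}{r}\osc_{Q_r^+}v\leq C\bar CKr^{1+\alpha}$, with the inductive constant $\bar C$ present; the same $\bar C$ appears in your rescaled bound $\|Dw\|_{L^\infty(Q_{r/2}^*)}\leq C\bar CKr^{1+\alpha}$. Restoring it, both the $\bar R'$ correction and the $N_0X\cdot Dw$ contribution to the ABPT estimate produce terms of size $C\bar CKr^{3+\alpha}$, which cannot be absorbed by enlarging $\bar C$ (unlike the $CKr^{2+\gamma}$ terms in the paper's proofs). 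Dividing the desired inequality by $\bar CKr^{2+\alpha}$ one needs $(1-\theta)\eta^2 + Cr + Cr^{1-\alpha}/\bar C\leq\eta^{2+\alpha}$, which forces $r\leq c(\eta,\theta)$ for a universal constant $c$. So the induction as you have set it up cannot begin at $k=0$. This is fixable — take $R_k=R_0$ and absorb the trivial oscillation bound into $\bar C$ for all $k\leq k_*$ with $k_*$ universal, then run the inductive step only for $k\geq k_*$ — but as written, the error-term bookkeeping silently drops a $\bar C$ factor, and the final inequality claimed from "$r\leq 1$ and $\bar C$ sufficiently large" does not in fact hold at the largest scales. The "standard localization" you invoke for ABPT is likewise not automatic (the paper handles the analogous boundary term \textbf{V} in Theorem~\ref{oblique_general_H2+a} by a separate $H^\alpha$-interpolation argument); it is a manageable but non-trivial step that deserves more than a parenthetical.
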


\begin{proof}
Our intention here is to "convert" our problem into a constant non-homogeneous oblique derivative problem in order to use the result of Lemma \ref{oblique_general_H2+a_i}. To do so we add to $u$ a suitable paraboloid. Note that $u$ satisfies $H^{1+\alpha}$-estimates locally up to the flat boundary and $H^{2+\alpha}$-interior estimates so it is in fact a classical solution.

First we choose $N \in S_n$ so that $N\beta=A^\tau Du(0,0)$. Note that such a matrix exists since the above is actually a linear system of $n$ equations and $\frac{n(n+1)}{2}$ variables and the matrix of the system can be shown to have rank equals to $n$ (using that $\beta_n \neq 0$). Moreover $||N||_\infty \leq C(n,\delta_0) |Du(0,0)|$. Then we define $v(X,t):= u(X,t) +\frac{1}{2} X^\tau N X$. Then $F(D^2v-N)-v_t=0,  \ \ \text{ in } \ \ Q_1^+.$ Also, for $X \in Q_1^*$, 
\begin{align*}
\beta \cdot Dv(X,t) &= -N_0X\cdot Du(X,t) + \beta \cdot NX = -(Du(X,t))^\tau N_0X+X^\tau N_0^\tau Du(0,0)\\
&=(Du(0,0)-Du(x,0,t))^\tau N_0 (x,0)=:g(x,t).
\end{align*}
We observe also that $v(0,0)=u(0,0)=0$, $G(M):=F(M-N)$ has the same ellipticity constants as $F$ and $||g||_{L^\infty \left(Q_r^*\right)} \leq ||N_0||_\infty r  ||Du(0,0)-Du(x,0,t)||_{L^\infty \left(Q_r^*\right)}\leq C ||u||_{L^\infty \left(Q_1^+\right)} r^{1+\alpha}$.

Therefore we can apply Lemma \ref{oblique_general_H2+a_i} to $v$ to obtain that there exists $\bar{R}(X,t)=\bar{B}\cdot X +\bar{\Gamma}t+\frac{1}{2} X^\tau \bar{D} X$ so that
$$||v-\bar{R}||_{L^\infty \left(Q_r^+\right)} \leq C \left( ||v||_{L^\infty \left(Q_1^+\right)}+||g||_{H^{1+\alpha} \left(Q_{1/2}^+\right)}+|F(-N)| \right) r^{2+\alpha} \leq C  ||u||_{L^\infty \left(Q_1^+\right)} r^{2+\alpha}$$
for any $r \leq \frac{1}{4}$. Taking as $R^0(X,t):=\bar{R}(X,t)+\frac{1}{2}X^\tau N X$ the proof is complete.
\end{proof}

\begin{proof}[Proof of Theorem \ref{oblique_general_H2+a}]
Before we start let us denote for convenience $K:=||u||_{L^\infty \left( Q_{1}^+ \right)}+||g||_{H^{1+\gamma}\left(  \overline{Q}_{1/2}^* \right)} +||f||_{ H^\gamma \left( Q_1^+\right)} $ and $\beta^0:=\beta (0,0),\ \beta_{x_i}^0:=\beta_{x_i} (0,0) \in \R^n$.

We intend to find some $R^{0}(X,t)= B^{0} \cdot X+\Gamma^{0} t + \frac{1}{2} X^\tau D^{0} X$, with  $\beta^0 \cdot B^0 =0$ and $F(D^0)-\Gamma^0=0$ so that for universal $C>0, 0<\eta<1, 0<\rho<1, \alpha_0>0$ and $\alpha = \min\{\alpha_0, \gamma\}$ we will have
\begin{equation} \label{oblique_general_H2+a_est1} 
\osc_{Q_{\rho \eta^k}^+} \left( u(X,t) - R^{0}(X,t) \right) \leq CK \eta^{k(2+\alpha)}, \ \ \text{ for any } \ \ k \in \N. 
\end{equation}

Now, to prove (\ref{oblique_general_H2+a_est1}) we are going to show by induction that there exist universal constants $0<\eta <<1, 0<\rho<1, \bar{C}>0, \alpha_0>0$ such that for $\alpha = \min\{\alpha_0, \gamma\}$ we can find a paraboloid $R_k(X,t)= B_k\cdot X+\Gamma_k t + \frac{1}{2} X^\tau D_k X$, with 
\begin{align}\label{oblique_general_H2+a_est2_0}
F(D_k)-\Gamma_k=0, \ \ \beta^0 \cdot B_k=0 \ \ \text{ and }\ \sum_{j=1}^n \left[ (D_k)_{ij} \beta_j^0+(\beta_j)_{x_i}^0 (B_k)_j\right]=0, \ i\leq n-1
\end{align}
for any $k \in \N$ so that
\begin{equation} \label{oblique_general_H2+a_est2} 
\osc_{Q_{\rho \eta^k}^+} \left( u(X,t) - R_k(X,t) \right) \leq \bar{C}K \eta^{k(2+\alpha)}
\end{equation}
and
\begin{equation} \label{oblique_general_H2+a_est3} 
||D_{k+1}-D_k|| \leq CK\eta^{k\alpha}, \ |\Gamma_{k+1}-\Gamma_k|| \leq CK\eta^{k\alpha}, \ |B_{k+1}-B_k| \leq CK\eta^{k(1+\alpha)}.
\end{equation}

First, for $k=0$, take $B_0=0$, $\Gamma_0=0$ and $(D_0)_{ij}=0$, for $ij \neq nn$ and $(D_0)_{nn}=\tau_0$ where $\tau_0$ is chosen so that $F(D_0)=0$ (see Remark \ref{useful_matrix}) and $\bar{C}$ large enough. 

Next for the induction we assume that we have found paraboloids $R_0,R_1,\dots,R_{k_0}$ for which  (\ref{oblique_general_H2+a_est2_0}), (\ref{oblique_general_H2+a_est2}) and (\ref{oblique_general_H2+a_est3}) are true. Denoting by $r:=\frac{\rho \eta ^{k_0}}{2}$ we have 
\begin{equation}\label{oblique_general_H2+a_est4} 
\osc_{Q_r^+} \left( u(X,t) - R_{k_0}(X,t) \right) \leq \frac{4}{\rho^{2+\alpha}} \bar{C}K r^{(2+\alpha)}.
\end{equation}

Now we are going to consider a suitable oblique derivative problem (as the one of Lemma \ref{oblique_general_H2+a_ii}). So let $v$ be the viscosity solution of
\begin{align*}
\begin{cases}
G(D^2v) -v_t = 0, &\ \ \ \ \ \text{ in } \ Q_r^+ \\
(\beta^0+D\beta^0x) \cdot Dv=0, &\ \ \ \ \ \text{ on } \ Q_r^* \\
v=u-R_{k_0}, &\ \ \ \ \ \text{ on } \ \partial_p Q_r^+ \setminus Q_r^*
\end{cases}
\end{align*}
where $G(M)=F(M+D_{k_0})-\Gamma_{k_0}$ which is an elliptic operator with the same ellipticity constants as $F$. Note that $G(O)=F(D_{k_0})-\Gamma_{k_0}=0$. Also by $D\beta^0$ we denote the matrix $(D\beta^0)_{ij}=(\beta_i)_{x_j}(0,0)$, $i=1,\dots ,n, \ j=1,\dots , n-1$. Then $v$ satisfies ABPT-estimate for the oblique derivative case (see Theorem \ref{ABP-Oblique}) which gives
\begin{equation} \label{oblique_general_H2+a_est6} 
\osc_{Q_{r}^+} v \leq \osc_{Q_{r}^+}\left( u(X,t) - R_{k_0}(X,t) \right).
\end{equation}
From Lemma \ref{oblique_general_H2+a_ii} we have that  $\bar{B}:=Dv(0,0)$, $\bar{\Gamma}:=v_t(0,0)$, $\bar{D}:=D^2v(0,0)$ exist and for $\bar{R}(X,t)=\bar{B}\cdot X + \bar{\Gamma}t+\frac{1}{2}X^\tau \bar{D}X$ we have
\begin{equation} \label{oblique_general_H2+a_est7} 
\osc_{Q_{\tilde{r}}^+} \left(v(X,t)-\bar{R} (X,t) \right) \leq C_0 \left( \frac{\tilde{r}}{r}\right)^{2+\alpha_1} \ \osc_{Q_{r}^+} v 
\end{equation}
for any $\tilde{r} \leq \frac{r}{4}$ and also 
\begin{equation} \label{oblique_general_H2+a_est8} 
|\bar{B}| \leq  \frac{C}{r}\osc_{Q_{r}^+} v, \ |\bar{\Gamma}| \leq  \frac{C}{r^2}\osc_{Q_{r}^+} v, \ ||\bar{D}||_\infty \leq  \frac{C}{r^2}\osc_{Q_{r}^+} v.  
\end{equation}
Note that $(\beta^0+D\beta^0 \ 0) \cdot\bar{B}=0$ that is $\beta^0 \cdot \bar{B}=0$ and $F(\bar{D}+D_{k_0})-\Gamma_{k_0}-\bar{\Gamma}=0$. Also, $(\beta^0+D\beta^0x) \cdot Dv=0$ holds in the classical sense on $Q_r^*$ and we can differentiate this condition with respect to $x_i$, for any $i\leq n-1$ to get at $x=0$, $\sum_{j=1}^n \left[ \bar{D}_{ij} \beta_j^0+(\beta_j)_{x_i}^0\bar{B}_j\right]=0$. 

Next, we take $\tilde{r}=\eta r$ (for $0<\eta<\rho$) in (\ref{oblique_general_H2+a_est7} ). Hence
\begin{equation} \label{oblique_general_H2+a_est9} 
\osc_{Q_{\eta r}^+} \left(v(X,t)- \bar{R} (X,t) \right) \leq C_0 \eta^{2+\alpha_1} \osc_{Q_{r}^+} v.
\end{equation}
Now take (universal) $0<\eta<<1$ sufficiently small in order to have that $8 \ C_0 \eta^{\alpha_1}<1$. We denote by $1-\theta :=8\ C_0 \eta^{\alpha_1}$, where $0<\theta<1$ is a universal constant. Then 
\begin{equation} \label{oblique_general_H2+a_est10} 
\osc_{Q_{\eta r}^+} \left(v(X,t)-\bar{R}(X,t)\right) \leq \frac{(1-\theta)}{2} \ \eta^2 \ \frac{\bar{C}}{\rho^{2+\alpha}}Kr^{2+\alpha}.
\end{equation}

Now to return to $u$ we define $w=u-R_{k_0}-v$. Note that $F(D^2(R_{k_0}+v))-(R_{k_0}+v)_t=F(D_{k_0}+D^2v)-\Gamma_{k_0}-v_t=0$. Moreover we can easily check that $DR_{k_0} = D_{k_0}X+B_{k_0}$. That is, $w$ satisfies
\begin{align*}
\begin{cases}
w \in S_p\left( \frac{\lambda}{n}, \Lambda ,f \right), &\ \ \ \ \ \text{ in } \ Q_r^+ \\
\beta \cdot Dw =g- \beta \cdot \left( D_{k_0}X+B_{k_0} + Dv \right), &\ \ \ \ \ \text{ on } \ Q_r^* \\
w=0, &\ \ \ \ \ \text{ on } \ \partial_p Q_r^+ \setminus Q_r^*.
\end{cases}
\end{align*}

Now for $0<\mu <1$ (to be chosen universal) we denote by $\bar{r}:= r(1-\mu )<r$. We apply again Theorem \ref{ABP-Oblique}
\begin{align}\label{oblique_general_H2+a_est11}
\osc_{Q_{\bar{r}}^+} w &\leq Cr ||f||_{L^{n+1}(Q^+_r)}+ Cr ||g||_{L^\infty(Q^*_r)}+Cr ||\beta \cdot ( D_{k_0}X+B_{k_0})||_{L^\infty(Q^*_r)} \nonumber \\
&\ \ \ +Cr ||\beta \cdot Dv||_{L^\infty(Q^*_{\bar{r}})}+ \osc_{{\partial_p Q_{\bar{r}}^+ \setminus Q_{\bar{r}}^*}} w \nonumber \\
&=: \textbf{I}+\textbf{II}+\textbf{III}+\textbf{IV}+\textbf{V}.
\end{align}
We want to bound every term \textbf{I} - \textbf{V} by a term of order $r^{2+\alpha}$. We start with term \textbf{I}. We have 
$\textbf{I}\leq  Cr||f||_{L^\infty \left(Q_r^+ \right)} \ C(n) r^{\frac{n+2}{n+1}} \leq Cr^2||f||_{L^\infty \left(Q_r^+ \right)}$ then using the $H^\gamma$ regularity of $f$ and the fact that $f(0,0)=0$ we get
$$\textbf{I} \leq Cr^2||f-f(0,0)||_{L^\infty \left(Q_r^+ \right)} \leq CKr^{2+\gamma}.$$
Next, for term \textbf{II}, we use the $H^{1+\gamma}$-regularity of $g$ and the fact that $g(0,0)=0, \ Dg(0,0,)=0$, 
$$\textbf{II}  = Cr||g-g(0,0)-Dg(0,0)\cdot x||_{L^\infty(Q^*_r)} \leq Crr^{1+\gamma} K \leq CKr^{2+\gamma}.$$
We continue with term \textbf{III} and we study first the term
$$\textbf{A}:=(\beta^0 + D\beta^0x) \cdot ( D_{k_0}X+B_{k_0})=\beta^0 \cdot  D_{k_0}X+ D\beta^0x \cdot  D_{k_0}X+ D\beta^0x \cdot B_{k_0}$$
and 
$$\beta^0 \cdot  D_{k_0}X= \sum_{i=1}^n\beta_i^0\sum_{k=1}^{n-1} (D_{k_0})_{ik}x_k,\ \ D\beta^0x \cdot B_{k_0}= \sum_{i=1}^n\sum_{k=1}^{n-1} (\beta_i)_{x_k}^0x_k(B_{k_0})_i.$$
Hence, $\textbf{A}=D\beta^0x \cdot  D_{k_0}X$. Returning to \textbf{III}, we have
\begin{align*}
\textbf{III} &\leq Cr ||\beta-\beta^0 - D\beta^0x ||_{L^\infty(Q^*_r)}||D_{k_0}X+B_{k_0}||_{L^\infty(Q^*_r)}\\
&\ \ \ + Cr ||(\beta^0 + D\beta^0x) \cdot ( D_{k_0}X+B_{k_0})||_{L^\infty(Q^*_r)}\\
&\leq Cr r^{1+\gamma}(||D_{k_0}||_{\infty}+|B_{k_0}|)+Crr^2||D\beta^0||_\infty ||D_{k_0}||_\infty.
\end{align*}
Note also that $|B_{k_0}|\leq CK$ and $||D_{k_0}||_{\infty} \leq CK$ which can be derived by (\ref{oblique_general_H2+a_est3}) and the fact that $B_0=0$ and $||D_{k_0}||\leq CK$. Then $\textbf{III}\leq CKr^{2+\gamma}.$ Next for term \textbf{IV}, we use again the $H^{1+\gamma}$-regularity of $\beta$ and the fact that $(\beta^0+D\beta^0x) \cdot Dv=0$ on $Q_r^*$, we have
$$\textbf{IV} \leq Cr||\beta-\beta^0 - D\beta^0x ||_{L^\infty(Q^*_r)}||Dv||_{L^\infty(Q^*_{\bar{r}})} \leq C_2\rho^{1+\gamma}\frac{\bar{C}}{\rho^{2+\alpha}}Kr^{2+\alpha}.$$
Finally we examine term \textbf{V}. Let $(X_0,t_0) \in \partial_p Q_{\bar{r}}^+ \setminus Q_{\bar{r}}^*$. If $|X_0|=\bar{r}$ we choose $\bar{X}_0 \in \left(\partial B_r\right)^+$ so that $|X_0-\bar{X}_0|=\mu r \leq \sqrt{2 \mu}r$ and $\bar{t}_0=t_0$. If $|X_0|<\bar{r}$ then $t_0=-(1-\mu)^2r^2$ and we choose $\bar{t}_0=-r^2$ then $|t_0-\bar{t}_0|^{1/2}=r \sqrt{\mu (2- \mu)} \leq \sqrt{2 \mu}r$ and $\bar{X}_0=X_0$. In any case  $|X_0-\bar{X}_0|+|t_0-\bar{t}_0|^{1/2}\leq  \sqrt{2 \mu}r $ and $(\bar{X}_0,\bar{t}_0) \in \partial_p Q_{r}^+ \setminus Q_{r}^*$ that is $w \left( \bar{X}_0, \bar{t}_0 \right) =0$. Then
\begin{align}\label{oblique_general_H2+a_est11_1}
|w \left( X_0,t_0 \right)|\leq |\left( u \left( X_0,t_0\right) - R_{k_0}(X_0,t_0) \right)-\left( u \left( \bar{X}_0, \bar{t}_0 \right) -R_{k_0}(\bar{X}_0,\bar{t_0}) \right)|  +|v \left( X_0,t_0\right)-v\left( \bar{X}_0, \bar{t}_0 \right)|
\end{align}
and we bound these terms using $H^\alpha$-estimates. Indeed, we have that
\begin{align*}
\begin{cases}
F(D^2(u-R_{k_0})+D_{k_0}) -\Gamma_{k_0}-(u-R_{k_0})_t = f, &\ \ \ \ \ \text{ in } \ Q_{2r}^+ \\
\beta \cdot D(u-R_{k_0}) = g-\beta \cdot (D_{k_0}X+B_{k_0}), &\ \ \ \ \ \text{ on } \ Q_{2r}^*. \\
\end{cases}
\end{align*}
Then Theorem \ref{bdary_C^a_oblique} gives
\begin{align*}
||u-R_{k_0}||_{H^{\alpha_2} \left( \overline{Q}^+_{r} \right) } \leq &\ \frac{C}{r^{\alpha_2}} \ \left( ||u-R_{k_0}||_{L^\infty \left( Q_{2r}^+ \right)}+r^{\frac{n}{n+1}}||f||_{L^{n+1}\left( Q_{2r}^+ \right)} \right)\\
&+\frac{C}{r^{\alpha_2}} \left( r||g||_{L^\infty \left( Q_{2r}^* \right)} +r||\beta \cdot (D_{k_0}X+B_{k_0})||_{L^\infty \left( Q_{2r}^* \right)}\right).
\end{align*}
Next we apply to $v$ global $H^\alpha$-estimates. Note that the values of $v$ on the parabolic boundary equal to $u-R_{k_0}$ which is $H^{\alpha_2}$. So, for $0<\alpha_3 < <\alpha_2$ universal, we have
\begin{align*}
||v||_{H^{\alpha_3} \left( \overline{Q}^+_{r} \right) } &\leq \ \frac{C}{r^{\alpha_3}} \ \left( ||v||_{L^\infty \left( Q_{r}^+ \right)}+r^{\alpha_2}||u-R_{k_0}||_{H^{\alpha_2} \left( \overline{Q}^+_{r} \right) } \right)\\
&\leq \ \frac{C}{r^{\alpha_3}} \ \ ||u-R_{k_0}||_{L^\infty \left( Q_{2r}^+ \right)} \\
&\ \ \ +\frac{C}{r^{\alpha_3}} \left( r^{\frac{n}{n+1}}||f||_{L^{n+1}\left( Q_{2r}^+ \right)}+r||g||_{L^\infty \left( Q_{2r}^* \right)} +r||\beta \cdot (D_{k_0}X+B_{k_0})||_{L^\infty \left( Q_{2r}^* \right)}\right).
\end{align*}
Now, we return to (\ref{oblique_general_H2+a_est11_1}).
\begin{align*}
|w(X_0,t_0)| &\leq C \mu ^{\alpha_3/2}\ \ ||u-R_{k_0}||_{L^\infty \left( Q_{2r}^+ \right)} \\
&\ \ \ + C \mu ^{\alpha_3/2} \left( r^{\frac{n}{n+1}}||f||_{L^{n+1}\left( Q_{2r}^+ \right)}+r||g||_{L^\infty \left( Q_{2r}^* \right)} +r||\beta \cdot (D_{k_0}X+B_{k_0})||_{L^\infty \left( Q_{2r}^* \right)}\right) \\
&\leq \textbf{VI}+\textbf{I}'+\textbf{II}'+\textbf{III}'.
\end{align*}
For term \textbf{VI}, we use the hypothesis of the induction, $\textbf{VI} \leq C_1 \mu ^{\alpha_3/2}\frac{\bar{C}}{\rho^{2+\alpha}}Kr^{2+\alpha}$. Moreover for term \textbf{I}$'$, we have $\textbf{I}'\leq Cr^{\frac{n}{n+1}}||f||_{L^\infty \left(Q_r^+ \right)} \ C(n) r^{\frac{n+2}{n+1}} = Cr^2||f||_{L^\infty \left(Q_r^+ \right)}$ 
then using the $H^\gamma$ regularity of $f$ and the fact that $f(0,0)=0$ we get $\textbf{I}' \leq CKr^{2+\gamma}.$
Also, terms \textbf{II}$'$ and \textbf{III}$'$ are in fact the same as terms \textbf{II} and \textbf{III}. That is,
$$\textbf{V} \leq C_1 \mu ^{\alpha_3/2} \frac{\bar{C}}{\rho^{2+\alpha}}Kr^{2+\alpha}+ C \mu ^{\alpha_3/2}  r^{2+\gamma } \ K +C_2\rho^{1+\gamma}\frac{\bar{C}}{\rho^{2+\alpha}}Kr^{2+\alpha}.$$
So, returning to (\ref{oblique_general_H2+a_est11}), we have
$$\osc_{Q_{\bar{r}}^+} w \leq C K r^{2+\gamma }+C_1 \mu ^{\alpha_3/2} \frac{\bar{C}}{\rho^{2+\alpha}}Kr^{2+\alpha}+C_2\rho^{1+\gamma}\frac{\bar{C}}{\rho^{2+\alpha}}Kr^{2+\alpha}. $$

Next combining the above with (\ref{oblique_general_H2+a_est10}) and choosing $\mu < 1 - \eta$ (then $\eta<1-\mu$) we get
\begin{align}\label{oblique_general_H2+a_est12} 
&\osc_{Q_{ \eta r}^+} \left[ u(X,t)-(R_{k_0}+\bar{R})(X,t)  \right] \nonumber \\ &\leq \frac{1}{2}(1-\theta) \eta^2 \frac{\bar{C}}{\rho^{2+\alpha}}Kr^{2+\alpha}   +C K r^{2+\gamma }+C_1 \mu ^{\alpha_3/2} \frac{\bar{C}}{\rho^{2+\alpha}}Kr^{2+\alpha}+C_2\rho^{1+\gamma}\frac{\bar{C}}{\rho^{2+\alpha}}Kr^{2+\alpha}.
\end{align}
We choose the right constants $\alpha_0$, $\mu$ and $\bar{C}$. So, take $\alpha_0$ so that $\eta^{\alpha_0}=1-\frac{\theta}{2}$ and $\alpha=\min\{\alpha_0, \gamma\}$. Take $\mu \leq \frac{\eta^{\frac{2(2+\alpha)}{\alpha_3}}}{(4C_1)^{\frac{2}{\alpha_3}}}$, $\rho \leq \frac{\eta^{\frac{2+\alpha}{1+\gamma}}}{(4C_2)^{\frac{1}{1+\gamma}}}$ and $\bar{C}$ large enough so that $\frac{\eta \theta \bar{C}}{4\rho^{2+\alpha}} \geq  C$. Then we return to (\ref{oblique_general_H2+a_est12}) writing $1-\theta$ as $1-\frac{\theta}{2}-\frac{\theta}{2}$ and recalling that $r=\frac{\rho \eta^{k_0}}{2}\leq \eta^{k_0}$,
\begin{align*}
\osc_{Q_{\rho \eta^{k_0+1}}^+} \left[ u(X,t)-(R_{k_0}+\bar{R})(X,t) \right] &\leq K \left[\frac{1}{2} \left(1-\frac{\theta}{2}\right)\bar{C}\eta^2 \eta^{k_0(2+\alpha)} +C r^{2+\alpha } +\bar{C}\frac{\eta^{2+\alpha}}{2} \eta^{k_0(2+\alpha)}\right]\\
& \ \ \ -K\frac{\eta \theta \bar{C}}{4\rho^{2+\alpha}} r^{2+\alpha} \leq \bar{C}K \eta^{(k_0+1)(2+\alpha)}.
\end{align*}
For $R_{k_0+1}=R_{k_0}+\bar{R}$ we have (\ref{oblique_general_H2+a_est2}) for $k_0+1$. Note also that $F(D_{k_0}+\bar{D})-(\Gamma_{k_0}+\bar{\Gamma})=0$, $\beta^0 \cdot B_{k_0+1} =0$ and for any $i=1,\dots n-1$, $\sum_{j=1}^n \left[ (D_{k_0+1})_{ij}\beta_j^0+(\beta_j)_{x_i}^0(B_{k_0+1})_j\right]=0$. It remains to get (\ref{oblique_general_H2+a_est3}) for $k=k_0$. To do so, we use relation (\ref{oblique_general_H2+a_est8}) together with (\ref{oblique_general_H2+a_est6}) and then (\ref{oblique_general_H2+a_est4}).

Then we can finish the proof in the same way as in the proof of Lemma \ref{oblique_general_H2+a_i}.
\end{proof}

\appendix

\section{Auxiliary Results}

In this section  we provide the proofs of results mentioned in the text for completeness (see \cite{Wang2}). We start with the proof of Lemma \ref{osc_u/y}. The following Lipschitz-estimate is used. It can be proved using a barrier argument, see for instance Lemma 2.1 in \cite{BCN}.

\begin{prop} \label{Lipschitz}
Let $f$ be bounded in $Q_1^+$ and $u \in C \left( Q_1^+ \cup Q_1^*  \right)$ be bounded and satisfy in the viscosity sense
\begin{align*}
\begin{cases}
u \in S_p(\lambda,\Lambda,f), &\ \ \ \ \ \text{ in } \ Q_1^+ \\
u = 0, &\ \ \ \ \ \text{ on } \ Q_1^*. \\
\end{cases}
\end{align*}
Then there exists universal constant $C>0$ so that 
\begin{equation}\label{Lips_est}
|u(X,t)| \leq C \left( ||u||_{L^\infty \left( Q^+_1\right)} +||f||_{L^{n+1}\left( Q^+_1 \right)} \right) \ y
\end{equation}
for every $(X,t)=(x,y,t) \in \overline{Q}^+_{1/2}$.
\end{prop}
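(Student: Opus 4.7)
Set $K := \|u\|_{L^\infty(Q_1^+)} + \|f\|_{L^{n+1}(Q_1^+)}$. The inequality being linear in $(u,f)$, I may normalise $K = 1$. Fix $P = ((x,y),t) \in \overline{Q}_{1/2}^+$. If $y \geq 1/4$ then $|u(P)| \leq \|u\|_\infty \leq 1 \leq 4y$, so I may assume $0 < y < 1/4$ and set $P^* := ((x,0),t) \in \overline{Q}_{1/2}^*$; then $\overline{Q}_{1/2}^+(P^*) \subset \overline{Q}_1^+$ and $P \in \overline{Q}_{1/2}^+(P^*)$.

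\textbf{Explicit barrier.} On $\overline{Q}_{1/2}^+(P^*)$ consider the smooth function
$$\psi(X',t') \;:=\; A\,y' \;+\; B\bigl(|x'-x|^2 + (t-t')\bigr) \;-\; C_0 B\,(y')^2, \qquad X' = (x',y') \in \R^{n-1}\times \R,$$
with $A,B,C_0 > 0$ to be fixed. A direct computation yields $D^2\psi = \operatorname{diag}(2B I_{n-1},\,-2C_0 B)$ and $\psi_{t'} = -B$, whence
$$\mathcal{M}^+(D^2\psi) - \psi_{t'} \;=\; B\bigl[\,2(n-1)\Lambda + 1 - 2\lambda C_0\,\bigr].$$
Fixing $C_0 := ((n-1)\Lambda + 1/2)/\lambda$ makes $\psi$ a classical supersolution of the Pucci max equation, and a routine case analysis on the lateral face $\{|X' - (x,0)| = 1/2,\ y' > 0\}$ and on the temporal face $\{t' = t - 1/4\}$ shows that $B = 4$ and $A = 2(1+C_0)$ yield $\psi \geq 0$ on the flat face $Q_{1/2}^*(P^*)$, $\psi \geq 1 \geq \|u\|_\infty$ on $\partial_p Q_{1/2}^+(P^*) \setminus Q_{1/2}^*(P^*)$, together with the pointwise estimate $\psi(P) = Ay - C_0 B y^2 \leq C y$ at the target point.

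\textbf{Comparison via ABP.} Set $w := \psi - u$. By sub-additivity of the Pucci extremal operators applied at the level of viscosity test functions to the subsolution property of $u \in \underline{S}_p(\lambda,\Lambda,f)$ and the classical supersolution property of $\psi$, one obtains $w \in \overline{S}_p(\lambda,\Lambda,-f)$ in $Q_{1/2}^+(P^*)$. Moreover $w \geq 0$ on $\partial_p Q_{1/2}^+(P^*)$, from the above boundary bounds on $\psi$ together with $u = 0$ on the flat face and $u \leq 1$ on the lateral face. Theorem \ref{ABP-Oblique} applied to $w$ with zero oblique data then gives $\sup w^- \leq C\|f^-\|_{L^{n+1}(Q_{1/2}^+(P^*))} \leq C$, so $u(P) \leq \psi(P) + C \leq Cy + C$; a symmetric application to $-u$ yields $|u(P)| \leq Cy + C$.

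\textbf{Closing the gap --- the main obstacle.} The single-scale estimate above retains an additive constant coming from the $L^{n+1}$-ABP contribution of $f$, and so is not yet linear in $y$. This is the principal technical point: a naive optimisation over a single scale only delivers the Hölder-$\tfrac{1}{2}$ rate $|u(P)| \leq C\sqrt{y}$. I would close the estimate by rerunning the barrier--ABP comparison on the shrunken half-cylinders $Q_r^+(P^*)$ for $r$ in the dyadic range $[y,1/2]$, observing that the rescaled barrier gives $\psi_r(P) \leq Cy/r$ while the ABP error scales as $Cr\|f\|_{L^{n+1}} \leq Cr$, so that $|u(P)| \leq Cy/r + Cr$ at every such scale. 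A dyadic iteration across $r_k = 2^{-k}$, in the spirit of Lemma 2.1 of \cite{BCN} explicitly invoked by the authors, then absorbs the additive constant into a linear-in-$y$ term and produces $|u(P)| \leq Cy$. Undoing the normalisation $K=1$ yields the stated estimate \eqref{Lips_est}.
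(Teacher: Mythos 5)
Your barrier construction, the Pucci sub-additivity step giving $w := \psi - u \in \overline{S}_p(\lambda,\Lambda,-f)$, and the ABP estimate at a single scale are all correct (and your remark that the ABP proof applies here because $w \geq 0$ on the whole parabolic boundary, making the flat-face case vacuous, is sound). The difficulty is real and you have located it exactly: the single-scale bound is $|u(P)| \leq Cy + C\|f\|_{L^{n+1}}$, and optimising $Cy/r + Cr$ over the scale $r$ only delivers $\sqrt{y}$. Where the argument breaks is in the claim that a dyadic iteration "absorbs the additive constant." It does not. Track the quantity $M_k := r_k^{-1}\sup_{Q_{r_k}^+(P^*)} u^+$ with $r_k = 2^{-k}$: to dominate $u$ on the lateral boundary at scale $r_{k+1}$ the barrier must have height $\gtrsim M_k r_{k+1}$ there, hence slope $\gtrsim M_k$ near $\{y=0\}$, so the barrier evaluation gives $\psi(P) \leq C_* M_k y'$ with a barrier constant $C_* > 1$ that cannot be improved, and ABP adds $C r_{k+1}$. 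This yields $M_{k+1} \leq C_* M_k + C$, which diverges; no rearrangement of the dyadic passes recovers a bounded slope. The sub-unity rate $\sqrt{y}$ is the genuine ceiling of the barrier-plus-ABP-with-source scheme.

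The fix — and this is what Lemma 2.1 of \cite{BCN} does, the reference the paper points to — is to make the barrier a \emph{strict} supersolution that absorbs $f$, and then invoke the comparison principle with \emph{zero} right-hand side, so there is no additive ABP error at all. Concretely, augment your barrier by an extra quadratic in $y'$: replace $-C_0 B (y')^2$ by $-\bigl(C_0 B + \tfrac{\|f\|_{L^\infty}}{2\lambda}\bigr)(y')^2$. The eigenvalues of $D^2\psi$ are now $2B$ (multiplicity $n-1$) and $-2C_0 B - \|f\|_{L^\infty}/\lambda$, and with your choice $C_0 = ((n-1)\Lambda + \tfrac12)/\lambda$ one gets $\mathcal{M}^+(D^2\psi) - \psi_{t'} = -\|f\|_{L^\infty}$. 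Then the same touching argument shows $w = \psi - u \in \overline{S}_p(\lambda,\Lambda,0)$: for $\phi$ touching $w$ from below, $\mathcal{M}^-(D^2\phi) - \phi_t \leq \mathcal{M}^+(D^2\psi) - \psi_t - f = -\|f\|_{L^\infty} - f \leq 0$. With $w \geq 0$ on $\partial_p Q_{1/2}^+(P^*)$ (which costs only a universal enlargement of $A$ to compensate the extra negative term), the zero-source ABP gives $w \geq 0$ outright, hence $u(P) \leq \psi(P) \leq C(\|u\|_{L^\infty} + \|f\|_{L^\infty})y$, and symmetrically for $-u$. Note this route produces $\|f\|_{L^\infty}$ rather than $\|f\|_{L^{n+1}}$; since the proposition already assumes $f$ bounded and every place it is used downstream (Lemma \ref{osc_u/y}, Corollary \ref{u_y}) keeps $\|f\|_{L^\infty}$, this is harmless — but it is worth flagging that the $L^{n+1}$ version as stated does not follow from the argument you (or the cited barrier technique) set up, and would need a genuinely different mechanism.
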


\begin{proof}[Proof of Lemma \ref{osc_u/y}]
The idea of the proof of Lemma \ref{osc_u/y} is based on the proof of Theorem 9.31 in \cite{GT} or on its parabolic version appeared in \cite{Lieb} (Lemma 7.46 
and 7.47).

First we observe that $\frac{u}{y}$ is bounded in $\overline{Q}_{1/2}^+$ from Proposition \ref{Lipschitz}. It is enough to show
\begin{equation} \label{osc_u/y_est1}
\osc_{\overline{Q}_{\tau \rho}^+} \frac{u}{y} \leq \gamma \osc_{\overline{Q}_\rho^+} \frac{u}{y}+ C||f||_{L^\infty \left( Q_1^+ \right)} , \ \text{ for every } \ 0<\rho \leq \frac{1}{2}
\end{equation}
where $0<\tau, \gamma<1$ and $C>0$ are universal constants, then (\ref{osc_u/y_est}) follows by standard iteration. To get (\ref{osc_u/y_est1}) we use a barrier argument in order to be able to apply Harnack inequality to $\frac{u}{y}$ up to the flat boundary.

First we consider the case when $u \geq 0$ in $Q_1^+$.
\\ \underline{Step 1.} Set $v := \frac{u}{y}$. Then for any $0<\rho \leq \frac{1}{2}$, $0<\delta \leq 1$ and $A=(0,\dots,0,\rho)$ we see that $Q_{\rho/2}(A,0) \subset H(\rho,1)$ and we apply Harnack inequality there. For $K_R := B_{\frac{R^2}{2 \sqrt{2} }}(0,0) \times \left[ -R^2 + \frac{3}{8} R^4, -R^2+ \frac{4}{8} R^4 \right]$, where $0<R <<1$ universal constant,
\begin{align*}
\sup_{K_{\frac{\rho R}{2}}(A,0)} v  \leq \frac{2}{\rho}  \sup_{K_{\frac{\rho R}{2}}(A,0)} u &\leq \frac{2}{\rho} C \left( \inf_{Q_{\frac{\rho R^2}{2}}(A,0)} u +\rho \ ||f||_{L^\infty \left( Q_1^+ \right)} \right) \\
&\leq \frac{2}{\rho} C \left(  \frac{3\rho}{2}\inf_{Q_{\frac{\rho R^2}{2}}(A,0)} v +\rho \ ||f||_{L^\infty \left( Q_1^+ \right)} \right).
\end{align*}
Hence, defining the following thin set,
$$H'(\rho,\delta) := \left\lbrace (X,t): |x|<\frac{\rho R^2}{4}, y=\delta \rho, -\frac{\rho^2 R^2}{16}<t\leq 0 \right\rbrace$$
which lies in $Q_{\frac{\rho R^2}{2}}(A,0)$ for $0<\delta< \frac{\sqrt{3}R^2}{4}$, we have $\ 
\sup_{K_{\frac{\rho R}{2}}(A,0)} v  \leq C \left(  \inf_{H'(\rho,\delta)} v + \ ||f||_{L^\infty \left( Q_1^+ \right)} \right)$.

\underline{Step 2.} Now using a suitable barrier argument we will get an estimate up to the flat boundary, $\
\inf_{H'(\rho,\delta)} v \leq C \left( \inf_{\tilde{H}(\frac{\rho}{4},\delta)} v  +  ||f||_{L^\infty \left( Q_1^+ \right)} \right)$, where 
$$\tilde{H} (\rho,\delta) := \left\lbrace (X,t): |x|<\frac{\rho R^2}{4}, 0<y<\delta \rho, -\frac{\rho^2 R^2}{16}<t\leq 0 \right\rbrace.$$

For convenience we consider the function $\bar{u} := \frac{1}{m} u$, where $m:=\inf_{H'(\rho,\delta)} v$. Then $\bar{u} \in S_p (\lambda, \Lambda, \bar{f})$ in $Q_1^+$, where $\bar{f}:= \frac{f}{m}$. Moreover, if we denote by $\bar{v}:= \frac{\bar{u}}{y}$ then we want to get 
$$C \left( \inf_{\tilde{H}(\frac{\rho}{4},\delta)} \bar{v}  + ||\bar{f}||_{L^\infty \left( Q_1^+ \right)} \right) \geq 1.$$ 
For, we define
$$b(X,t)=y \ \left[ 1 - \frac{|x|^2}{\tilde{\rho}^2} + \frac{t}{\tilde{\rho}^2} + \left( \frac{1+||\bar{f}||_{L^\infty \left( Q_1^+ \right)}}{\lambda} \right) \left( \frac{y-\delta \rho}{\sqrt{\delta}\rho} \right) \right] \ \text{ for } \ (X,t) \in \tilde{H} (\rho,\delta)$$
where $\tilde{\rho}:=\frac{\rho R^2}{4}$. Our intention is to apply a comparison principle for $b$ and $\bar{u}$. We show 
\begin{enumerate}
\item $M^- (D^2b)-b_t \geq \bar{f}$ in $\tilde{H} (\rho,\delta)$. Then $\bar{u}-b \in \overline{S}_p(\lambda,\Lambda,0)$ in $\tilde{H} (\rho,\delta)$.
\item $\bar{u} - b \geq 0$ on $\partial_p\tilde{H} (\rho,\delta)$.
\end{enumerate}

Recall that $\mathcal{M}^- (M,\lambda,\Lambda )  = \inf_{A \in \mathcal{A}_{\lambda, \Lambda }} L_A(M)$, where $\mathcal{A}_{\lambda, \Lambda }$ be the subset of $S_n$ containing all matrices whose eigenvalues lie in the interval $[\lambda,\Lambda]$ and for $A \in \mathcal{A}_{\lambda, \Lambda }$, $L_A$ is the linear functional $L_A(M)=tr (AM)$, where $M \in S_n$. So we want to show that, for any such linear operator $L_A$, $L_A(D^2b)-b_t \geq \bar{f}$. Take any  $A \in \mathcal{A}_{\lambda, \Lambda }$ and observe that $\lambda \leq  a_{ii} \leq \Lambda $ and $|a_{in}|\leq \Lambda - \frac{\lambda}{2}=:C_0>0$. So in $\tilde{H} (\rho,\delta)$, using that $y<\rho \delta, \ |x|< \tilde{\rho}, \ \delta < \sqrt{\delta}$, we compute
\begin{align*}
L_A(D^2b)-b_t \geq - \frac{16}{\rho R^4}(1+2n\Lambda) \sqrt{\delta} - \frac{16 C_0n}{\rho R^2}+ \frac{2}{\sqrt{\delta}\rho}\left( 1+||\bar{f}||_{L^\infty \left( Q_1^+ \right)}\right).
\end{align*}
That is, it is enough to show that $ - \frac{16}{\rho R^4}(1+2n\Lambda) \delta - \frac{16 C_0n}{\rho R^2} \sqrt{\delta}+ 2\geq 0.$
The above is a polynomial in $\bar{\delta}:= \sqrt{\delta}$. One can observe that this polynomial has two universal roots $\ \bar{\delta}_1<0,\ \bar{\delta}_2>0$ and the polynomial is positive in $(\bar{\delta}_1,\bar{\delta}_2)$. So if we choose $0<\delta<\bar{\delta}_1^2$ we have the desired. 

Now we examine $b$ on $\partial_p\tilde{H} (\rho,\delta)$. We split the boundary data in the following cases
\begin{enumerate}
\item[$\bullet$] For $y=0$, $b=0=u=\bar{u}$.
\item[$\bullet$] For $y=\delta \rho$, $b(x,\delta \rho,t) = \delta \rho \ \left( 1 - \frac{|x|^2}{\tilde{\rho}^2} + \frac{t}{\tilde{\rho}^2} \right) \leq \delta \rho \leq \bar{u}(x,\delta \rho,t)$.
\item[$\bullet$] For $t=-\tilde{\rho}^2$, $b(X,-\tilde{\rho}^2)=y \ \left[  - \frac{|x|^2}{\tilde{\rho}^2}  + \left( \frac{1+||\bar{f}||_{L^\infty \left( Q_1^+ \right)}}{\lambda} \right) \left( \frac{y-\delta \rho}{\sqrt{\delta}\rho} \right) \right] \leq 0 \leq \bar{u}(X,-\tilde{\rho}^2).$
\item[$\bullet$] For $|x|=\tilde{\rho}$, $b(X,t)=y \ \left[   \frac{t}{\tilde{\rho}^2}  + \left( \frac{1+||\bar{f}||_{L^\infty \left( Q_1^+ \right)}}{\lambda} \right) \left( \frac{y-\delta \rho}{\sqrt{\delta}\rho} \right) \right] \leq 0 \leq \bar{u}(X,t).$
\end{enumerate}

Therefore $\bar{u}-b \geq 0$ in $\tilde{H} (\rho,\delta)$ and as a consequence, in $\tilde{H} \left( \frac{\rho}{4},\delta \right)$ we have an estimate by below for the ratio
\begin{align*}
\frac{\bar{u}(X,t)}{y} \geq\frac{7}{8} - \frac{\sqrt{\delta}}{\lambda} - \frac{||\bar{f}||_{L^\infty \left( Q_1^+ \right)}}{\lambda} \geq \frac{1}{2} - \frac{||\bar{f}||_{L^\infty \left( Q_1^+ \right)}}{\lambda}
\end{align*}
using $|x| < \frac{\tilde{\rho}}{4}, t > -\frac{\tilde{\rho}^2}{16},\ y >0$ and choosing $1 \leq \delta \leq \left( \frac{3\lambda}{8} \right)^2$. Hence taking infimum  we get the desired

Next we remove the assumption on the nonnegativity of $u$. 
\\ \underline{Step 3.} We denote $M:= \sup_{\tilde{H} \left( 2\rho,\delta \right)}v$ and $m:=\sup_{\tilde{H} \left( 2\rho,\delta \right)}v$. Then the functions $My-u, u-my$ are nonnegative. Applying Step 2 to these two functions and then adding the two estimates we conclude
$$\osc_{\tilde{H} \left( \frac{\rho}{4},\delta \right)} v \leq \frac{C-1}{C} \osc_{\tilde{H} \left( 2\rho,\delta \right)}v + 2C \  ||\bar{f}||_{L^\infty \left( Q_1^+ \right)}.$$
\end{proof}

Then we examine the $H^{1+\alpha}$ regularity for the nonlinear parabolic Dirichlet problem (Theorem \ref{Dirichlet_H1+a}). We start by studying the homogeneous case using Lemma \ref{osc_u/y}.

\begin{lemma} \label{Dirichlet_H1+a_hom} 
Let $u \in C \left( Q_r^+ \cup Q_r^*  \right)$ be bounded and satisfy in the viscosity sense
\begin{align} \label{Dir_prob_hom}
\begin{cases}
F(D^2u) -u_t = 0, &\ \ \ \ \ \text{ in } \ Q_r^+ \\
u = 0, &\ \ \ \ \ \text{ on } \ Q_r^*. \\
\end{cases}
\end{align}
Then the first derivatives $u_{x_1},\dots,u_{x_{n-1}},u_y$ exist in $\overline{Q}_{r/2}^+$. Moreover there exists universal constant $0<\alpha < 1$ so that $u$ is punctually $H^{1+\alpha}$ at every point $P_0 \in Q_{r/2}^*$. More precisely for $b_{P_0}=u_y(P_0)$ and any $\tilde{r} \leq \frac{r}{2}$
\begin{equation}\label{Dirichlet_H1+a_est_hom}
|u(X,t) -b_{P_0}y| \leq C \ \frac{\tilde{r}^{1+\alpha}}{r^{1+\alpha}} \left( ||u||_{L^\infty \left( Q_{r}^+ \right)} +r^2|F(O)| \right) 
\end{equation}
for every $(X,t) \in \overline{Q}_{\tilde{r}}^+(P_0)$, where $C>0$ is a universal constant.
\end{lemma}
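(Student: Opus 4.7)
The plan is to apply Lemma \ref{osc_u/y} directly to $u$ itself. First I would reduce to $r=1$ by the standard parabolic rescaling $\tilde u(X,t) = u(rX, r^2 t)$; the rescaled equation has operator $\tilde F(M) = r^2 F(r^{-2}M)$ with $\tilde F(O) = r^2 F(O)$, which accounts for the $r^2 |F(O)|$ appearing in \eqref{Dirichlet_H1+a_est_hom}. Writing $F(D^2 u) - u_t = 0$ as $(F(D^2 u) - F(O)) - u_t = -F(O)$ and using the uniform ellipticity \eqref{ell_cond} of $F$, I obtain $u \in S_p(\lambda, \Lambda, -F(O))$ in $Q_1^+$. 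Throughout, set $K := \|u\|_{L^\infty(Q_1^+)} + |F(O)|$.

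Next, Proposition \ref{Lipschitz} gives $|u(X,t)| \leq C K y$ on $\overline{Q}_{1/2}^+$, so $u/y$ is a priori bounded there with oscillation at most $C K$. Fix $P_0 = (x_0,0,t_0) \in Q_{1/2}^*$ and translate Lemma \ref{osc_u/y} to $P_0$: applying it on the half-cylinder $\overline{Q}_{1/4}^+(P_0) \subset Q_1^+$ with $f \equiv -F(O)$ yields
\begin{equation*}
\osc_{\overline{Q}_\rho^+(P_0)} \frac{u}{y} \leq C \rho^\alpha K, \qquad 0 < \rho \leq 1/8,
\end{equation*}
for universal constants $C>0$ and $0<\alpha<1$.

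This oscillation decay is a Cauchy condition along any sequence of points in $Q_\rho^+(P_0)$ with $y>0$ as $\rho \to 0^+$, so $u/y$ has a single limit $b_{P_0}$ at $P_0$; specialising to $(X,t)=(x_0,h,t_0)$ and letting $h \to 0^+$ identifies $b_{P_0}$ with the one-sided normal derivative $u_y(P_0)$. Plugging $b_{P_0}$ back into the oscillation estimate gives $|u(X,t)/y - b_{P_0}| \leq C \rho^\alpha K$ on $Q_\rho^+(P_0) \cap \{y>0\}$, and multiplying by $y \leq \rho$ produces exactly the punctual $H^{1+\alpha}$ bound $|u(X,t) - b_{P_0}\, y| \leq C \rho^{1+\alpha} K$. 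The other first derivatives at boundary points are trivial, since $u \equiv 0$ on $Q_1^*$ forces $u_{x_i}(P_0)=0$ for $i \leq n-1$; existence of all first spatial derivatives at interior points of $\overline{Q}_{1/2}^+$ comes from the interior $H^{1+\alpha}$ theory in Section 4.2 of \cite{Wang2}. Undoing the rescaling restores the factor $\tilde r^{1+\alpha}/r^{1+\alpha}$ and the $r^2|F(O)|$ in \eqref{Dirichlet_H1+a_est_hom}.

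The entire argument is really an unpacking of Lemma \ref{osc_u/y}: the analytic work is already packaged there and in Proposition \ref{Lipschitz}. The one step requiring a little care will be extracting $b_{P_0}$ as a Cauchy limit of $u/y$ and then reinterpreting it as the one-sided $y$-derivative so that the conclusion matches the definition of punctual $H^{1+\alpha}$ regularity given in Section \ref{notations}, but there is no genuine obstacle beyond bookkeeping.
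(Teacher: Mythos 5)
Your proposal is correct and follows essentially the same route as the paper: apply Lemma~\ref{osc_u/y} (after translating to $P_0$ and, in your version, rescaling to $r=1$) to obtain an oscillation decay for $u/y$, extract $b_{P_0}=u_y(P_0)$ as a Cauchy limit of $u(x_0,h,t_0)/h$ as $h\to 0^+$, and then multiply the oscillation bound by $y\leq\tilde r$ to produce the punctual $H^{1+\alpha}$ estimate, with Proposition~\ref{Lipschitz} providing the a priori bound on $\osc(u/y)$ in terms of $\|u\|_{L^\infty}$ and $|F(O)|$. The only difference is cosmetic bookkeeping (explicit rescaling up front, explicit remark that $u_{x_i}(P_0)=0$ on the flat boundary, and appeal to interior $H^{1+\alpha}$ theory from \cite{Wang2} for interior points), none of which changes the argument.
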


\begin{proof}
First let us examine what Lemma \ref{osc_u/y} ensures:
\begin{enumerate}
\item[$\bullet$] $u_y$ exists on $Q_r^*$. Indeed we show this at $(0,0)$. Let the sequence $\{h_k\}_k$ be so that $h_k \searrow 0$ as $k \to \infty$
and take $m>l$ (large enough) then applying Lemma \ref{osc_u/y} (rescaled) we obtain
$$ \frac{u(0,h_m,0)}{h_m} - \frac{u(0,h_l,0)}{h_l} \leq \frac{C}{r^\alpha}K(h_l)^\alpha$$
where $K:= \osc_{\overline{Q}_{r/2}^+} \frac{u}{y} + |F(O)|$. That is the sequence $ \{ \frac{u(0,h_k,0)}{h_k} \}$ is a Cauchy sequence and hence it converges to $u_y(0,0)$ (since $u(0,0)=0$). 
\item[$\bullet$] $u_y \in H^\alpha \left(Q_{r/2}^* \right)$.Indeed, let $h< \frac{\rho}{2}, \rho<\frac{r}{2}$ and $(x_0,t_0),(z_0,s_0) \in Q_{\rho/2}^*$ then
$$ \frac{u(x_0,h,t_0)}{h} - \frac{u(z_0,h,s_0)}{h} \leq \frac{C}{r^\alpha}K\rho^\alpha.$$
Taking $h \to 0$ we obtain $\ \osc_{Q_{\rho/2}^*} u_y \leq \frac{C}{r^\alpha}K\rho^\alpha$.
\end{enumerate}

Now let $(X,t) \in \overline{Q}_{\tilde{r}}^+$ and $h>0$ small, 
$$\frac{u(X,t)}{y} - \frac{u(0,h,0)}{h} \leq  C \left( \frac{\tilde{r}}{r}\right)^\alpha \left( \frac{1}{r} \osc_{\overline{Q}_{r}^+} u+r^2 |F(O)| \right).$$
Then letting $h \to 0^+$ and since $0<y \leq \tilde{r}$ we get
$$|u(X,t) - u_y(0,0)y| \leq  \  C \left( \frac{\tilde{r}}{r}\right)^{1+\alpha} \left( ||u||_{L^\infty\left(Q_r^+\right)} +r^2 |F(O)| \right).$$
\end{proof}

Next we go from the homogeneous to the non-homogeneous case using the standard approximating procedure used also in Theorems \ref{oblique_general_H1+a}, \ref{oblique_general_H2+a} and \ref{oblique_general_H2+a_i}. We give the proof briefly for completeness.

\begin{proof}[Proof of Theorem \ref{Dirichlet_H1+a}]
We will show the theorem around $P_0=(0,0)$. Note that without the loss of generality we can assume that $u(0,0)=g(0,0)=0$ and $\nabla_{n-1}g(0,0)=0$ (since we can consider the transformation $u(X,t)-g(0,0)-\nabla_{n-1}g(0,0) \cdot x$). For convenience  let us denote $K:=||u||_{L^\infty \left( Q_{1}^+ \right)}+||g||_{H^{1+\alpha}\left(  \overline{Q}_{1/2}^* \right)}  +|F(O)|$.

We intend to find a number $A \in \R$ so that, for universal $C>0, 0<\gamma<1, \alpha_0>0$ and $\beta = \min\{\alpha,\alpha_0\}$, we will have
\begin{equation} \label{Dirichlet_H1+a_est1}
\osc_{Q_{\gamma^k}^+} \left( u(X,t) - Ay \right) \leq CK \gamma^{k(1+\beta)}, \ \ \text{ for any } \ \ k \in \N. 
\end{equation} 

Now, to prove (\ref{Dirichlet_H1+a_est1}) we are going to show by induction that there exist universal constants $0<\gamma <<1, \bar{C}>0, \alpha_0>0$ such that for $\beta :=\min \{\alpha, \alpha_0\}$ we can find a number $A_k \in \R$ for any $k \in \N$ so that
\begin{equation} \label{Dirichlet_H1+a_est2}
\osc_{Q_{\gamma^k}^+} \left( u(X,t) - A_ky \right) \leq \bar{C}K \gamma^{k(1+\beta)}
\end{equation}
and
\begin{equation} \label{Dirichlet_H1+a_est3}
|A_{k+1}-A_k| \leq CK\gamma^{k\beta}.
\end{equation}
Note that the right constants will be deduced from the induction. The details follow.

First, for $k=0$, take $A_0=0$ and choose any $\bar{C}\geq 2$. Next for the induction we assume that we have found numbers $A_0,\dots,A_N$ for which (\ref{Dirichlet_H1+a_est2}) and (\ref{Dirichlet_H1+a_est3}) are true. 

Now we consider a suitable problem with homogeneous Dirichlet data on the flat boundary in order to use Theorem \ref{Dirichlet_H1+a_hom}. Let $v$ be the viscosity solution of
\begin{align*}
\begin{cases}
F(D^2v) -v_t = 0, &\ \ \ \ \ \text{ in } \ Q_r^+ \\
v=0, &\ \ \ \ \ \text{ on } \ Q_r^* \\
v=u-By, &\ \ \ \ \ \text{ on } \ \partial_p Q_r^+ \setminus Q_r^*.
\end{cases}
\end{align*}
Then $v$ satisfies maximum principle which gives
\begin{equation} \label{Dirichlet_H1+a_est6}
\osc_{Q_{r}^+} v \leq \osc_{Q_{r}^+}\left( u(X,t) - By \right) +Cr^2|F(O)|.
\end{equation}
From Lemma \ref{Dirichlet_H1+a_hom} we have that $A:=v_y(0,0)$ exists and
\begin{equation} \label{Dirichlet_H1+a_est7}
\osc_{Q_{\tilde{r}}^+} \left(v(X,t)-Ay \right) \leq C_0 \left( \frac{\tilde{r}}{r}\right)^{1+\alpha_1} \left(\osc_{Q_{r}^+} v + r^2|F(O)|\right)
\end{equation}
for any $\tilde{r} \leq \frac{r}{2}$ and also $\ 
|A| \leq C \left( \frac{1}{r}\osc_{Q_{r}^+} v + r^2|F(O)|\right).$

Next, we take $\tilde{r}=\gamma r$ (note that $\gamma$ is very small) in (\ref{Dirichlet_H1+a_est7}). Hence
\begin{equation} \label{Dirichlet_H1+a_est9}
\osc_{Q_{\gamma r}^+} \left(v(X,t)-Ay \right) \leq C_0 \gamma^{1+\alpha_1} \osc_{Q_{r}^+} v + C_0r^2\gamma|F(O)|
\end{equation}
since $\gamma^{1+\alpha_1} \leq \gamma$. Now take (universal) $\gamma<<1$ sufficiently small in order to have that $C_0 \gamma^{\alpha_1}<1$. We denote by $1-\theta :=C_0 \gamma^{\alpha_1}$, where $0<\theta<1$ is a universal constant. Then combining (\ref{Dirichlet_H1+a_est9}) and (\ref{Dirichlet_H1+a_est6}) we obtain
\begin{equation} \label{Dirichlet_H1+a_est10}
\osc_{Q_{\gamma r}^+} \left(v(X,t)-Ay \right) \leq (1-\theta) \gamma \osc_{Q_{r}^+}\left( u(X,t) - By \right) +Cr^2|F(O)|.
\end{equation}

Now to return to $u$ we define $w=u-By-v$. Then
\begin{align*}
\begin{cases}
w \in S_p\left( \frac{\lambda}{n}, \Lambda \right), &\ \ \ \ \ \text{ in } \ Q_r^+ \\
w=g, &\ \ \ \ \ \text{ on } \ Q_r^* \\
w=0, &\ \ \ \ \ \text{ on } \ \partial_p Q_r^+ \setminus Q_r^*.
\end{cases}
\end{align*}

Subsequently, applying again maximum principle we obtain
$\osc_{Q_{ r}^+} w \leq C ||g||_{L^\infty(Q^*_r)}.$
The regularity we have assumed for $g$ will give the right decay for the oscillation of $w$. That is, (since $g(0,0)=0, \nabla_{n-1}g(0,0)=0$)
\begin{align*}
|g(x,t)| &= |g(x,t)-g(0,0)-\nabla_{n-1}g(0,0) \cdot x| \ \ \ \text{ for } \ (x,t) \in Q_r^* \\
&\leq |g(x,t)-g(0,t)-\nabla_{n-1}g(0,t) \cdot x|  +|g(0,t)-g(0,0)|\\
&\ \ \ + |x| \ |\nabla_{n-1}g(0,t) -\nabla_{n-1}g(0,0)|\\
&\leq C ||g||_{H^{1+\alpha}\left(\overline{Q}_{1/2}^*\right)} \left( |x|^{1+\alpha}+|t|^{\frac{1+\alpha}{2}}+|t|^{\frac{\alpha}{2}}|x| \right)\\
&\leq C ||g||_{H^{1+\alpha}\left(\overline{Q}_{1/2}^*\right)} \left( \max\{|x|,|t|^{1/2} \} \right)^{1+\alpha} \leq C ||g||_{H^{1+\alpha}\left(\overline{Q}_{1/2}^*\right)}r^{1+\alpha}.
\end{align*}
Hence we obtain
\begin{equation} \label{Dirichlet_H1+a_est11}
\osc_{Q_{ r}^+} w \leq C r^{1+\alpha} ||g||_{H^{1+\alpha}\left(\overline{Q}_{1/2}^*\right)}.
\end{equation}

Adding (\ref{Dirichlet_H1+a_est10}) and (\ref{Dirichlet_H1+a_est11}) yields
$$\osc_{Q_{ \gamma r}^+} \left[ u(X,t)-(A+B)y \right] \leq (1-\theta) \gamma \osc_{Q_{r}^+}\left( u(X,t) - By \right) +Cr^2|F(O)|+C ||g||_{H^{1+\alpha}\left(\overline{Q}_{1/2}^*\right)}r^{1+\alpha}.$$
Recalling that $r=\gamma^N$ and using the hypotheses we get
\begin{equation}\label{Dirichlet_H1+a_est12}
\osc_{Q_{ \gamma^{N+1}}^+} \left[ u(X,t)-(A+B)y \right] \leq K \left[ (1-\theta)\bar{C}\gamma \gamma^{N(1+\beta)}+C \left( \gamma^{2N} + \gamma^{N(1+\alpha)} \right) \right].
\end{equation}
We have to choose the right constants $\alpha_0$ and $\bar{C}$.  Take $\alpha_0$ so that $\gamma^{\alpha_0}=1-\frac{\theta}{2}$ and $\bar{C}$ large enough so that $\frac{\gamma \theta \bar{C}}{4} \geq C$ (note that our choices are independent of $N$). Then we return to (\ref{Dirichlet_H1+a_est12}) writing $1-\theta$ as $1-\frac{\theta}{2}-\frac{\theta}{2}$ and recalling that $\beta = \min \{\alpha, \alpha_0 \}$,
\begin{align*}
\osc_{Q_{ \gamma^{N+1}}^+} \left[ u(X,t)-(A+B)y \right] &\leq K \left[ \left( 1-\frac{\theta}{2}\right)\bar{C}\gamma \gamma^{N(1+\beta)}+C \left( \gamma^{2N} + \gamma^{N(1+\alpha)} \right) - \frac{\theta}{2}\bar{C}\gamma \gamma^{N(1+\beta)}\right] \\
&\leq K \bar{C} \gamma^{(N+1)(1+\beta)}.
\end{align*}
Choosing $A_{N+1}=A_N+A$ the inductive proof is completed. 

Then the limit $\lim_{k\to \infty} A_k$ is the number $A$ of (\ref{Dirichlet_H1+a_est1}).
\end{proof}

Finally we prove a closedness result used in the text.

\begin{prop} (Closedness). \label{neumannclosedness}
Let $\{u_k \}_{k \in \mathbb{N}} \subset C(Q_1^+ \cup Q_1^*)$ are such that for every $k \in \mathbb{N}$, $u_k$ satisfies in the viscosity sense the following
\begin{align} \label{subproblem}
\begin{cases}
F \left( D^2 v(X,t) \right)-v_t(X,t)  \geq 0, & \ \ \ \ \ (X,t) \in Q_1^+ \\
v_y (x,0,t) \geq 0, & \ \ \ \ \ (x,t) \in Q_1^* \\
\end{cases}
\end{align}
Assume that $u_k$ converges to $u$ uniformly in any $\overline{ Q}_\rho ^+(x_0,0,t_0) \subset Q_1^+ \cup Q_1^*$, then $u$ satisfies (\ref{subproblem}) in the viscosity sense. 
\end{prop}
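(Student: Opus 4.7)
The argument splits into verifying the interior subsolution condition and the boundary condition for $u$, each via the standard viscosity stability scheme: take a smooth test function touching $u$ from above, regularize the touching to be strict, and then transfer information from $u_k$ using uniform convergence.

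For the interior equation, let $\psi$ be a smooth test function touching $u$ from above at an interior point $P_0\in Q_1^+$. After replacing $\psi$ by $\psi+\varepsilon(|X-X_0|^2+(t_0-t))$ we may assume strict touching. On a small closed parabolic cylinder $\overline{Q}_r(P_0)\Subset Q_1^+$, the minimum point $P_k$ of $\psi-u_k$ converges to $P_0$ by uniform convergence and strict touching, and for large $k$ lies in the interior. The viscosity subsolution property of $u_k$ then gives $F(D^2\psi(P_k))-\psi_t(P_k)\ge 0$, and passing to the limit (and then $\varepsilon\to 0$) concludes the interior case.

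For the boundary condition, let $P_0=(x_0,0,t_0)\in Q_1^*$ and let $\phi$ touch $u$ from above at $P_0$ in a half-cylinder $Q_\rho^+(P_0)\subset Q_1^+$. Replace $\phi$ by $\tilde\phi(X,t)=\phi(X,t)+|X-X_0|^2+(t_0-t)$; this makes the touching strict on $\overline{Q_{\rho/2}^+(P_0)}\setminus\{P_0\}$ while preserving $\tilde\phi_y(P_0)=\phi_y(P_0)$. The minimum point $P_k$ of $\tilde\phi-u_k$ on $\overline{Q_{\rho/2}^+(P_0)}$ converges to $P_0$ and, for large $k$, lies in $Q_{\rho/2}^+(P_0)\cup Q_{\rho/2}^*(P_0)$, since $\tilde\phi-u$ is bounded below by a positive constant on the non-flat portion of the parabolic boundary and $u_k\to u$ uniformly. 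If $P_k\in Q_{\rho/2}^*(P_0)$ for infinitely many $k$, the Neumann condition for $u_k$ applied at $P_k$ to the test function $\tilde\phi - c_k$ (which touches $u_k$ from above in a half-cylinder inside $Q_\rho^+(P_0)$) yields $\tilde\phi_y(P_k)\ge 0$, and passing to the limit gives $\phi_y(P_0)\ge 0$, as required.

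The main obstacle is the sub-case where $P_k$ remains in the interior $Q_{\rho/2}^+(P_0)$ for all large $k$; the interior subsolution inequality for $u_k$ then only supplies $F(D^2\tilde\phi(P_k))-\tilde\phi_t(P_k)\ge 0$, which is mute about $\tilde\phi_y$. To handle this I introduce the vertical slide $\tilde\phi^s(X,t):=\tilde\phi(X,t)+sy$ for $s\ge 0$, and for each $k$ let $s_k\ge 0$ be the smallest $s$ for which the minimum of $\tilde\phi^s-u_k$ over $\overline{Q_{\rho/2}^+(P_0)}$ is attained on the flat face at some point $\bar P_k$. The Neumann condition for $u_k$ at $\bar P_k$ then gives $\tilde\phi_y(\bar P_k)+s_k\ge 0$; a continuity analysis using strict touching and uniform convergence shows $s_k\to 0$ and $\bar P_k\to P_0$, so in the limit $\phi_y(P_0)\ge 0$, completing the proof.
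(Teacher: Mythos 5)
Your two-case dichotomy (touching points $P_k$ hitting the flat face vs.\ staying in the interior) follows the paper's scheme, and your treatment of the first case is correct. The gap lies in the persistent-interior case. Because the ``vertical slide'' $\tilde\phi^s=\tilde\phi+sy$ is \emph{linear} in $y$, it has zero Hessian, so it neither affects nor is constrained by the interior equation; all you extract is the Neumann inequality $\tilde\phi_y(\bar P_k)+s_k\ge 0$. But this inequality is itself only a \emph{lower} bound on $s_k$: it reads $s_k\ge-\tilde\phi_y(\bar P_k)$, and since $\bar P_k\to P_0$ and $\tilde\phi_y(P_0)=\phi_y(P_0)$ this forces $\liminf_k s_k\ge-\phi_y(P_0)$. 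Thus in the very scenario you must exclude, namely $\phi_y(P_0)<0$, the inequality you just derived already forces $s_k\not\to 0$. The assertion that ``a continuity analysis using strict touching and uniform convergence shows $s_k\to 0$'' is therefore equivalent to the conclusion you are trying to establish; it cannot follow from uniform convergence and strict touching alone, and one can write down sequences $u_k\to u$ uniformly (not subsolutions, but your continuity argument never invokes that hypothesis at this step) for which $\tilde\phi-u_k$ has an interior strict minimum near $P_0$ and $s_k$ stays bounded away from $0$.

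What is missing is a mechanism that brings the \emph{interior equation} into play to exclude persistent interior touching, and for this the perturbation must change the Hessian, not just the gradient. The paper uses $\phi_\gamma=\phi+\gamma y-\tfrac{y^2}{\gamma}$: the term $-\tfrac{y^2}{\gamma}$ contributes a large negative eigenvalue $-\tfrac{2}{\gamma}$ to $D^2\phi_\gamma$ in the $y$-direction, so by uniform ellipticity
$F\bigl(D^2\phi_\gamma(P_0)\bigr)-(\phi_\gamma)_t(P_0)\le F\bigl(D^2\phi(P_0)\bigr)-\phi_t(P_0)-\tfrac{2\lambda}{\gamma}<0$ for $\gamma$ small, while $\gamma y$ keeps $(\phi_\gamma)_y(P_0)=\phi_y(P_0)+\gamma<0$ whenever $\gamma<-\phi_y(P_0)$. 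This contradicts the dichotomy ``either $\phi_y(P_0)\ge 0$ or $F(D^2\phi(P_0))-\phi_t(P_0)\ge 0$,'' which your argument does correctly establish for every admissible test function touching $u$ from above at $P_0$, and hence also for $\phi_\gamma$. Replacing your linear slide by this quadratic perturbation, and applying the dichotomy once to $\phi$ and once to $\phi_\gamma$, closes the argument.
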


\begin{proof}
First note that proving that $F \left( D^2 u \right)-u_t  \geq 0$ in $Q_1^+$ in the viscosity sense is standard, see for example Proposition 2.9 in \cite{CC}. So, it remains to study the Neumann sub-condition and the proof is a suitable modification of the one for the equation. 

Take any point $P_0=(x_0,0,t_0) \in Q_1^*$ and any test function $\phi$ that touches $u$ by above at $P_0$ in $\overline{Q}^+_\rho(P_0) \in Q_1^+$. We want to show that, $\phi_y(P_0) \geq 0$.

We have for $\epsilon >0 $ and any $0<r<\rho$, $u(X,t) -\phi (X,t) - \frac{\epsilon}{2} (|X-x_0|^2 -t+t_0) < 0$, for $(X,t) \in Q_r^+(P_0) \setminus \{P_0\}.$ Denoting by $\tilde{\phi}(X,t) := \phi (X,t) +\frac{\epsilon}{2} (|X-X_0|^2 -t+t_0)$ and by $A_r(P_0):= \partial_p Q_r^+(P_0) \setminus Q_r^*(P_0)$  we consider, 
$$c:= \max_{(X,t) \in  A_r(P_0)} \left( u(X,t) - \tilde{\phi} (X,t) \right) <0.$$
Then $u- \tilde{\phi}  \leq c$ on $A_r(P_0)$.  

Using the uniform convergence of $u_k$ to $u$ and the definition of $c$, we have for large enough $k$, $u_k(X,t) - \tilde{\phi }(X,t) < u_k(P_0)- \tilde{\phi} (P_0) +\frac{c}{2}$, for any $(X,t) \in A_r(P_0)$. Set
$$C_k:= \max_{(X,t) \in  \overline{Q}_r^+(P_0)} (u_k(X,t)- \tilde{\phi} (X,t))$$ 
which is achieved at some point $(X_k,t_k) \in Q_r^+(P_0) \cup Q_r^*(P_0)$.

Therefore, for any large enough $m \in \N$ there exist points $(X_{k_m},t_{k_m}) \in Q_{1/m}^+(P_0) \cup Q_{1/m}^*(P_0)$ so that $(X_{k_m},t_{k_m}) \to P_0$, as $m \to \infty$ and the test function $ \psi_{k_m} := \tilde{\phi} +C_{k_m}$ touches by above $u_{k_m}$  at $(X_{k_m},t_{k_m})$. Hence, we treat two cases:
\begin{enumerate}
\item[\textbf{\textit{1.}}] If $(X_{k_m},t_{k_m}) \in Q_r^*(P_0)$ we have that $ \left( \psi_{k_m} \right)_y (X_{k_m},t_{k_m}) \geq 0$, hence $
\phi_y(X_{k_m},t_{k_m}) \geq 0$.
\item[\textbf{\textit{2.}}] If $(X_{k_m},t_{k_m}) \in Q_r^+(P_0)$ we have that $F(D^2 \phi (X_{k_m},t_{k_m}) + \epsilon I)-\phi_t(X_{k_m},t_{k_m}) +\frac{\epsilon}{2} \geq 0$.
\end{enumerate}

Now, if \textbf{\textit{1.}} is true for an infinite number of $m$'s then taking a suitable subsequence and passing to the limit we derive, $\phi_y(P_0) \geq 0$
as desired. Otherwise, \textbf{\textit{2.}} will be true for an infinite number of $m$ and so taking subsequences and limits we derive, $F(D^2 \phi (P_0))-\phi_t(P_0)) \geq 0.$

To finish the proof we assume that $\phi_y(P_0) < 0$ (to get a contadiction). Then having in mind the dichotomy above we conclude that $F(D^2 \phi (P_0))-\phi_t(P_0) \geq 0$ must be true. For small $\gamma>0$, we consider the perturbation of $\phi$, $\phi_\gamma (X,t) = \phi (X,t) + \gamma y - \frac{y^2}{\gamma}.$
Observe that if $(X,t) \in Q_{\gamma^2}^+ (P_0)$, then $\gamma y - \frac{y^2}{\gamma} \geq 0$. Therefore, we obtain that $\phi_\gamma$  touches $u$ by above at $P_0$  and following the same steps as we did for $\phi$ we conclude that
$$\left( \phi_\gamma \right) _y(P_0) \geq 0 \ \ \text{ or } \ \ F(D^2 \phi_\gamma (P_0))-\left( \phi_\gamma \right)_t(P_0) \geq 0.$$
A direct computation of these quantities and choosing $\gamma$ small enough (so that $\gamma<-\phi_y(P_0)$, $\frac{2 \lambda}{\gamma}> F \left( D^2 \phi (P_0) \right)- \phi_t(P_0) +1$).
\end{proof}

\bibliographystyle{plain}   % Here the bibliography
\bibliography{biblio}             % is inserted.
\index{Bibliography@\emph{Bibliography}}%

\vspace{3em}

\begin{tabular}{l}
Georgiana Chatzigeorgiou\\ University of Cyprus \\ Department of Mathematics \& Statistics \\ P.O. Box 20537\\
Nicosia, CY- 1678 CYPRUS
\\ {\small \tt chatzigeorgiou.georgiana@ucy.ac.cy}
\end{tabular}
\hspace{17mm}
\begin{tabular}{lr}
Emmanouil Milakis\\ University of Cyprus \\ Department of Mathematics \& Statistics \\ P.O. Box 20537\\
Nicosia, CY- 1678 CYPRUS
\\ {\small \tt emilakis@ucy.ac.cy}
\end{tabular}

%\begin{tabular}{l}
%Emmanouil Milakis\\ 
%University of Cyprus \\ 
%Department of Mathematics \& Statistics \\ 
%P.O. Box 20537\\
%Nicosia, CY- 1678 CYPRUS\\
% {e-mail : \small \tt emilakis@ucy.ac.cy}
%\end{tabular}
%\vspace{2em}
%
%\begin{tabular}{l}
%Name\\
%Address 1, \\
%Address 1 \\
%Address 1  \\
%{e-mail : \small \tt email@email}
%\hfill
%\end{tabular}

\end{document}